\newtheorem{conj}[theorem]{{\bf Conjecture}}
\newcommand{\cf}{\mathord{\mathrm{cf}}}
\newcommand{\size}[1]{\left\vert {#1} \right\vert}
\newcommand{\p}{\mathcal{P}}
\newcommand{\ot}{\mathord{\mathrm{ot}}}
\newcommand{\col}{\mathord{\mathrm{Col}}}
\newcommand{\seq}[1]{\langle {#1} \rangle}
\newcommand{\norm}[1]{\left\| {#1} \right\|}
\newcommand{\ka}{\kappa}
\newcommand{\la}{\lambda}
\newcommand{\om}{\omega}
\newcommand{\bbP}{\mathbb{P}}
\newcommand{\bbQ}{\mathbb{Q}}
\newcommand{\bbR}{\mathbb{R}}
\newcommand{\calC}{\mathcal{C}}
\newcommand{\ZF}{\mathsf{ZF}}
\newcommand{\ZFC}{\mathsf{ZFC}}
\newcommand{\Z}{\mathsf{Z}}
\newcommand{\AC}{\mathsf{AC}}
\begin{document}
\mainmatter              
\title{Choiceless L\"owenheim-Skolem property and uniform definability of grounds}
\titlerunning{Choiceless L\"owenheim-Skolem property and uniform definability of grounds}
%
\author{Toshimichi Usuba\inst{1}}
\authorrunning{Toshimichi Usuba} 
%
\tocauthor{Toshimichi Usuba}
\institute{Faculty of Fundamental Science and Engineering,
Waseda University, 
Okubo 3-4-1, Shinjyuku, Tokyo, 169-8555 Japan,
\email{usuba@waseda.jp}}

\maketitle              

\begin{abstract}
In this paper, without the axiom of choice, we show that 
if a certain downward L\"owenheim-Skolem property holds then all grounds are uniformly definable.
We also prove that the axiom of choice is forceable if and only if
the universe is a small extension of some transitive model of $\mathsf{ZFC}$.
\keywords{Axiom of Choice, Forcing method, Set-theoretic geology}
\end{abstract}

\section{Introduction}
\emph{Set-theoretic geology}, which was initiated by Fuchs-Hamkins-Reitz \cite{FHR},
is a study of the structure of all ground models of the universe.
In standard set-theoretic geology,
the universe is assumed to be a model of $\ZFC$,
and all ground models are also supposed to satisfy $\ZFC$.
On the other hand, 
it is possible that the universe is a generic extension of some choiceless model,
moreover in modern set theory,
the forcing method over choiceless models has become a common tool, e.g., Woodin's $\bbP$-max forcing 
over $L(\bbR)$.
So it is natural to consider  set-theoretic geology without the Axiom of Choice ($\AC$).
{\bf The base theory in this paper is $\ZF$ unless otherwise specified}.
Let  us say that a transitive model $W$ of $\ZF$ is a \emph{ground} of $V$
if there is a poset $\bbP \in W$ and a $(W,\bbP)$-generic $G$ with
$V=W[G]$. $V$ is a trivial ground of $V$. Again, {\bf we do not assume that a ground satisfies $\AC$,
unless otherwise specified}.

A first problem in developing set-theoretic geology without $\AC$ is 
the uniform definability of all  grounds.
Laver \cite{Laver}, and independently Woodin,
proved that, in $\ZFC$, the universe $V$ is definable in
its forcing extension $V[G]$ by a first-order formula with parameters.
Fuchs-Hamkins-Reitz \cite{FHR} refined their result and
showed that, in $\ZFC$, all grounds are uniformly definable by a first-order formula:
\begin{theorem}[Fuchs-Hamkins-Reitz \cite{FHR}, Reitz \cite{Reitz}, in $\ZFC$]\label{Orig. unif. def.}
There is a first-order formula $\varphi(x,y)$ of set-theory such that:
\begin{enumerate}
\item For every set $r$, the class $W_r=\{x \mid \varphi(x,r)\}$
is a ground of $V$ with $r \in W_r$ and satisfies $\AC$.
\item For every ground $W$ of $V$, if $W$ satisfies $\AC$
then $W=W_r$ for some $r$.
\end{enumerate}
\end{theorem}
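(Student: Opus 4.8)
\textit{Proof plan.} The plan is to build $\varphi$ from the Laver--Woodin analysis of ground models through Hamkins' approximation and cover properties. Recall that a transitive class $W\subseteq V$ with $\mathrm{Ord}\subseteq W$ has the \emph{$\delta$-cover property} in $V$ if every set $X\subseteq W$ with $\size{X}<\delta$ lies in some $Y\in W$ with $\size{Y}^{W}<\delta$, and the \emph{$\delta$-approximation property} in $V$ if every set $X\subseteq W$ with $X\cap a\in W$ for all $a\in W$ of $W$-cardinality $<\delta$ already belongs to $W$. For a \emph{definable} class $W=\{y:\chi(y)\}$ each of these is expressible by a single first-order formula (every quantifier ranges over sets), and so are "$W$ is transitive", "$\mathrm{Ord}\subseteq W$", "$W$ is closed under the G\"odel operations and is almost universal" (which together with the former give $W\models\ZF$), "$W\models\AC$" (every $a\in W$ carries a well-ordering lying in $W$; being a well-ordering is absolute for transitive classes), and "$V$ is a set-generic extension of $W$ by a poset of $W$-size $\le\delta$" (there are $\bbP\in W$ with $\size{\bbP}^{W}\le\delta$ and a $W$-generic filter $G$ on $\bbP$ so that every set has a $\bbP$-name in $W$).

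The main ingredient is the reconstruction lemma of Laver and Woodin (see \cite{Laver}): if $W_{0},W_{1}$ are transitive models of $\ZFC$ with $\mathrm{Ord}\subseteq W_{0}\cap W_{1}$, $\delta$ is a regular cardinal of $V$, each $W_{i}$ has the $\delta$-cover and $\delta$-approximation properties in $V$, and $\mathcal{P}(\delta)^{W_{0}}=\mathcal{P}(\delta)^{W_{1}}$, then $W_{0}=W_{1}$; moreover a single formula $\psi_{0}(x,\delta,A)$ witnesses this, in the sense that every such $W$ equals $\{x:\psi_{0}(x,\delta,\mathcal{P}(\delta)^{W})\}$. The idea is to recover, by induction on the infinite cardinals $\gamma\ge\delta$ of $W$, the level $\mathcal{P}(\gamma)^{W}$ from $\mathcal{P}(\delta)^{W}$ and the earlier levels: each new subset of $\gamma$ is first covered by a $W$-small set (cover property), which reduces matters to subsets of a set of size $<\delta$; coding such a set, via a well-ordering available since $W\models\AC$, as a subset of $\delta$, the approximation property then decides its membership in $W$ from the already-known data. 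As every element of $W$ lies in some $H_{\gamma}^{W}$ and $H_{\gamma}^{W}$ is coded by sets of ordinals of this kind, all of $W$ is recovered uniformly from the parameter $(\delta,\mathcal{P}(\delta)^{W})$.

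Now let $\Theta(r)$ be the first-order statement: $r=\seq{\delta,A}$ for some regular cardinal $\delta$ and some $A\subseteq\mathcal{P}(\delta)$, and the definable class $M_{r}:=\{x:\psi_{0}(x,\delta,A)\}$ is transitive, contains $\mathrm{Ord}$, is closed under the G\"odel operations, is almost universal, satisfies $\AC$, has the $\delta$-cover and $\delta$-approximation properties in $V$, has $\mathcal{P}(\delta)^{M_{r}}=A$, and is such that $V$ is a generic extension of $M_{r}$ by a poset of $M_{r}$-size $\le\delta$. Let $\varphi(x,r)$ say: \emph{either} $\Theta(r)$ holds and $\psi_{0}(x,\delta,A)$ (for the unique $\delta,A$ with $r=\seq{\delta,A}$), \emph{or} $\Theta(r)$ fails; so $W_{r}=\{x:\varphi(x,r)\}$ equals $M_{r}$ when $\Theta(r)$ holds and equals $V$ otherwise. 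For part (1): if $\Theta(r)$ fails then $W_{r}=V$, a trivial ground with $r\in V=W_{r}$ satisfying $\AC$ (we are in $\ZFC$); if $\Theta(r)$ holds then the clauses of $\Theta(r)$ make $W_{r}=M_{r}$ a transitive model of $\ZFC$ of which $V$ is a set-forcing extension, hence a ground, and $r=\seq{\delta,A}=\seq{\delta,\mathcal{P}(\delta)^{W_{r}}}\in W_{r}$ because $\delta$ and $A$ both lie in $W_{r}$.

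For part (2): let $W$ be a ground of $V$ with $W\models\AC$, fix $\bbP\in W$ and a $W$-generic $G$ on $\bbP$ with $V=W[G]$, and put $\delta=(\size{\bbP}^{+})^{W}$. By Hamkins' theorem that forcing of size $<\delta$ has the $\delta$-approximation and $\delta$-cover properties, $W$ has both in $V$; moreover $\bbP$ is $\delta$-c.c., so $\delta$ is preserved and remains regular in $V$, i.e.\ $\delta$ is a regular cardinal of $V$. With $A=\mathcal{P}(\delta)^{W}$ and $r=\seq{\delta,A}$, all clauses of $\Theta(r)$ hold, and by the reconstruction lemma the unique class witnessing them is $W$ itself; hence $W=\{x:\psi_{0}(x,\delta,A)\}=W_{r}$. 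The main obstacle is the reconstruction lemma --- in particular the \emph{uniformity}, that the recovery of $W$ from $\mathcal{P}(\delta)^{W}$ can be packaged into one formula $\psi_{0}$ working for all admissible $\delta$; a secondary, more clerical point is arranging $\varphi$ so that $W_{r}$ is \emph{always} a ground (the explicit default to $V$, and the clause forcing $V$ to be a $\delta$-small generic extension of $M_{r}$) while still catching every $\AC$-ground, which is exactly what Hamkins' theorem supplies.
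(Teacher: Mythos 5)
Your proposal is correct and is essentially the standard Laver--Woodin/Hamkins argument from \cite{FHR} and \cite{Reitz} that the paper cites for this theorem (and whose choiceless analogue the paper itself develops later via the norm covering and approximation properties and LS cardinals, following exactly this template). The only small caveat is that the reconstruction lemma as stated in \cite{FHR} also assumes $(\delta^+)^{W_0}=(\delta^+)^{W_1}=(\delta^+)^V$, which you omit; in your application this is harmless, since the clause that $V$ is a generic extension by a poset of size $\le\delta$ (hence $\delta^+$-c.c.) guarantees it.
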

First-order definability is an important property, which
allows us to treat all grounds within the first-order theory $\ZFC$.
However, their proofs heavily reply on $\AC$,
and it is still open if all grounds are  uniformly definable without $\AC$.
Gitman-Johnstone \cite{GJ} obtained a partial result under a fragment of $\AC$.
For instance, they showed that
if $\mathsf{DC}_\delta$ holds and a poset $\bbP$ has cardinality $\le \delta$
($\bbP$ is assumed to be well-ordeable),
then the universe $V$ is definable in its forcing extension via $\bbP$.
For this problem, we give another partial answer.
We prove that if a certain downward L\"owenheim-Skolem property holds,
then all grounds are uniformly definable as in Theorem \ref{Orig. unif. def.}.
We also show that such a downward L\"owenheim-Skolem property holds in
many natural models of $\ZF$, or if $V$ has many large cardinals.
So we can start studying set-theoretic geology in
many choiceless models.

We introduce the following notion,
which corresponds to the L\"owenheim-Skolem theorem in the context of $\ZFC$:
\begin{definition}
An uncountable cardinal $\ka$
is a \emph{L\"owenheim-Skolem cardinal} (LS cardinal, for short)
if for every $\gamma<\ka \le \alpha$ and $x \in V_\alpha$,
there is $\beta > \alpha$ and an elementary submodel $X \prec V_{\beta}$
such that:
\begin{enumerate}
\item $V_\gamma \subseteq X$.
\item $x \in X$.
\item The transitive collapse of $X$ belongs to $V_\ka$.
\item ${}^{V_\gamma} (X \cap V_\alpha) \subseteq X$.
\end{enumerate}
\end{definition}

Clearly a limit of LS cardinals is also an  LS cardinal,
hence a singular LS cardinal can exist.
In $\ZFC$, a cardinal $\ka$ is LS if and only if $\ka=\beth_\ka$,
so there are proper class many LS cardinals.

In $\ZF$, every supercompact cardinal 
(see Definition \ref{def. supercomact} below) is an LS cardinal.
We show that if there are proper class many LS cardinals,
e.g., there are proper class many supercompact cardinals,
then all grounds are uniformly definable. 
\begin{theorem}\label{thm1}
Suppose there are proper class many LS cardinals.
Then all grounds are uniformly definable, that is, 
there is a first-order formula $\varphi(x,y)$ of set-theory such that:
\begin{enumerate}
\item For every $r$, $W_r=\{x \mid \varphi(x,r)\}$ is a ground of $V$ with $r \in W_r$.
\item For every ground $W$ of $V$,
there is $r$ with $W_r=W$.
\end{enumerate}
\end{theorem}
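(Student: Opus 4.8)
The plan is to follow the template of Fuchs--Hamkins--Reitz and Reitz, but to replace every appeal to $\AC$ by reflection to small, highly closed transitive models supplied by the LS cardinals. Fix a ground $W$ of $V$, say $V=W[G]$ with $\bbP\in W$ and $G$ being $(W,\bbP)$-generic. Since $W$ is transitive, $\bbP$ has the same rank computed in $W$ and in $V$, so I may fix an ordinal $\gamma$ with $\bbP\in V_\gamma$. The parameter that should define $W$ is (a code for) $r_W=\langle\gamma,\mathcal{P}(V_\gamma^W)^W\rangle$, and the goal is to show that any ground arising from a poset of rank $<\gamma$ is recoverable from such a parameter, first-order-uniformly in $V$.

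The engine is the following reflection. Pick an LS cardinal $\kappa>\gamma$ (possible for every $\gamma$ since there are proper class many). For arbitrarily large limit ordinals $\alpha\ge\kappa$, apply the LS property with $\gamma$, with a suitable ordinal $\alpha'>\alpha$, and with a single parameter in $V_{\alpha'}$ coding $V_\gamma$, $\bbP$, $G$, $\mathcal{P}(V_\gamma^W)^W$, $W\cap V_\alpha$ and any prescribed $x$, obtaining $X\prec V_\beta$ (with $\beta$ a limit ordinal, which we may assume) satisfying (1)--(4). Condition (4) is decisive: every subset of $V_\gamma$ is a function $V_\gamma\to\{0,1\}$ with range contained in $\{0,1\}\subseteq V_\gamma\subseteq X$, so (4) forces $\mathcal{P}(V_\gamma)^V\subseteq X$ pointwise. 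Hence, writing $\pi\colon X\to M$ for the transitive collapse --- so $M\in V_\kappa$ by (3), and $\pi$ is the identity on $V_\gamma$ and on $\mathcal{P}(V_\gamma)$ --- we get $\mathcal{P}(V_\gamma)^M=\mathcal{P}(V_\gamma)^V$, $\bbP$ and $G$ are not moved, and, using the locality of small forcing ($V_\delta=(V_\delta^W)[G]$ for all sufficiently large limit $\delta$), the transitive set $\bar W:=\pi(W\cap V_\alpha)$ is a set-forcing ground of a tall initial segment of $M$ via $\bbP$, with $\mathcal{P}(V_\gamma^{\bar W})^{\bar W}=\mathcal{P}(V_\gamma^W)^W$; moreover, for $x\in X$, $\pi(x)\in\bar W$ if and only if $x\in W$. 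In short: every ground of $V$ by a poset of rank $<\gamma$ reflects, cofinally often, to a set-forcing ground of a transitive set model $M\in V_\kappa$ so closed that it computes $\mathcal{P}(V_\gamma)$ correctly, and the reflection preserves both the parameter $r_W$ and membership.

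The main work, and the chief obstacle, is a $\ZF$ analogue of the Laver--Woodin--Hamkins ground-model definability for small forcing, at the level of these set models: \emph{if $M$ is a transitive set model of a large enough fragment of $\ZF$ with $\mathcal{P}(V_\gamma)^M=\mathcal{P}(V_\gamma)$, and $\bar W\subseteq M$ is a set-forcing ground of $M$ via a poset of rank $<\gamma$, then $\bar W$ is definable over $M$ by a fixed formula $\Psi$ from the parameter $\langle\gamma,\mathcal{P}(V_\gamma^{\bar W})^{\bar W}\rangle$; in particular such a $\bar W$ is unique.} The plan for this is to show $\bar W$ has a ``$V_\gamma$-approximation property'' over $M$ (every set of ordinals of $M$, all of whose $V_\gamma$-indexed restrictions lie in $\bar W$, already lies in $\bar W$) together with a matching cover property, and then that these properties plus the parameter pin $\bar W$ down inside $M$; Bukovsk\'y's $\ZF$ characterisation of ground models by a covering property is the natural tool. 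The reason this can succeed without $\AC$ is exactly that $M$ is closed under $V_\gamma$-indexed sequences of its members and computes $\mathcal{P}(V_\gamma)$ correctly, so the counting and fusion arguments that in $\ZFC$ would be driven by a well-ordering of the small poset can here be driven by $V_\gamma$-indexed families instead. Carrying this out rigorously --- and verifying that it needs no more choice than the reflected models already supply --- is the hard part.

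Granting this, the formula is assembled as in Fuchs--Hamkins--Reitz. Let $\varphi(x,r)$ assert: ``$r$ codes a pair $\langle\gamma,s\rangle$; and if for cofinally many LS cardinals $\kappa>\gamma$ the reflected models $M$ admit a set-forcing ground $\bar W$ via a poset of rank $<\gamma$ with $\mathcal{P}(V_\gamma^{\bar W})^{\bar W}=s$ --- equivalently, with $\bar W$ the $\Psi$-definable class of $M$ from $\langle\gamma,s\rangle$ --- then $x\in W_r$ if and only if $\pi_\kappa(x)\in\bar W$ for cofinally many such $\kappa$; otherwise $W_r=V$.'' Since $\Psi$ defines $\bar W$ inside each $M$ from the parameter, and the $M$'s have rank $<\kappa$ and so are quantified as sets, $\varphi$ is first-order. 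For clause (2): every ground of $V$ comes from some poset, hence from one of rank $<\gamma$ for a suitable $\gamma$; taking $r=r_W$, the reflection step gives $\bar W=\pi_\kappa(W\cap V_\alpha)$ in each relevant $M$, so $\pi_\kappa(x)\in\bar W$ if and only if $x\in W$, whence $W_r=W$. For clause (1): if $\langle\gamma,s\rangle$ is not of the good form, $W_r=V$ is a trivial ground; if it is of the good form, the $\bar W$'s cohere across $\kappa$ (by the uniqueness established above, together with further LS reflection between two LS cardinals), and their limit $W_r$ is again a ground of $V$ --- here one once more invokes the Bukovsk\'y-type characterisation, now to see that a class all of whose bounded initial segments are coherently set-forcing extensions of $W_r$ is itself a set-forcing extension of $W_r$. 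Finally, since a limit of LS cardinals is again an LS cardinal and every supercompact cardinal is an LS cardinal, the hypothesis --- hence the conclusion --- holds in particular when there are proper class many supercompact cardinals.
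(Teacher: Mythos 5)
Your architecture is the right one --- LS-reflection to small transitive collapses, a cover/approximation pair measured against $V_\gamma$-indexed families rather than cardinalities, and a uniqueness lemma that turns the pair into definability --- and this is essentially the skeleton of the paper's proof (its ``$\gamma$-norm covering'' and ``$\gamma$-norm approximation'' properties, Lemmas \ref{2.7}, \ref{2.8}, \ref{2.10}). But the proposal has a genuine gap at exactly the point you flag as ``the hard part'': the central uniqueness/definability lemma is only a plan, and moreover the parameter you propose for it, $\langle\gamma,\p(V_\gamma^{W})^{W}\rangle$, is almost certainly too small in $\ZF$. In the $\ZFC$ argument of Laver/Hamkins, $\p(\delta)^W$ suffices because every set can be coded by a set of ordinals (indeed by a subset of some $V_\delta$-sized index set) and the approximation property then propagates agreement upward level by level. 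Without $\AC$ that coding is unavailable: in the induction step one collapses $X\cap W_{\bar\alpha}$ to a transitive set $Y$ of rank anywhere below the LS cardinal $\kappa$, and the image $\pi``x$ is a subset of $Y$, not of $V_\gamma$; to transfer it to the second candidate ground one needs the two models to agree on \emph{all} of $V_\kappa$, not merely on $\p(V_\gamma)$. This is why the paper's parameter is the entire rank-initial segment $W\cap V_\kappa$ for an LS cardinal $\kappa$ above the forcing (Lemma \ref{2.8} and Corollary \ref{2.9}); with your smaller parameter the induction does not close, and no argument is offered for why it would.

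Two further points. First, the appeal to ``Bukovsk\'y's $\ZF$ characterisation'' is doing real work in your sketch (both to pin down $\bar W$ inside $M$ and to see that the glued class $W_r$ is a ground), but Bukovsk\'y's theorem and its standard proofs are $\ZFC$ results; you would need to establish a choiceless version, which is not easier than the problem at hand. The paper avoids this entirely: it first characterises, uniformly in $r$, all transitive class models of $\ZF$ with the norm covering and approximation properties (Corollary \ref{2.9}), and then simply defines $W_r=W'_r$ when there happen to exist $\bbP\in W'_r$ and a $(W'_r,\bbP)$-generic $G$ with $W'_r[G]=V$, and $W_r=V$ otherwise --- no gluing lemma for ``coherently local'' grounds is needed. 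Second, your observation that clause (4) of the LS property forces $\p(V_\gamma)\subseteq X$ is correct but is a symptom of the same issue: what that clause really buys (and what the paper uses in Lemma \ref{2.7}) is that $V_\gamma$-indexed subfamilies of $X\cap V_\alpha$ are elements of $X$, which is how one verifies the approximation hypothesis for the trace $X\cap W_\alpha$; restricting attention to $\p(V_\gamma)$ discards the part of that closure that the argument actually needs at higher ranks.
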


We also prove that the statement ``there are proper class many LS cardinals''
is absolute between $V$ and its forcing extensions.
Hence under the assumption,
in any grounds  and generic extensions of $V$,
we can define all its grounds uniformly.

In $\ZFC$, there are proper class many LS cardinals,
which is a consequence of the L\"owenheim-Skolem theorem.
This means that if there is a poset which forces $\AC$,
then we can conclude that $V$ has proper class many LS cardinals.
Let us say that $\AC$ is \emph{forceable}
if there is a poset which forces $\AC$.
This result lead us to the problem of
when $\AC$ is forceable.
Blass \cite{Blass} already considered 
a necessary and sufficient condition for it.
The principle $\mathrm{SVC}$, \emph{Small Violation of Choice},
is the assertion that there is a set $X$ such that
for every set $Y$, there is an ordinal $\alpha$ and a surjection $f:X \times \alpha \to Y$.
\begin{theorem}[Blass \cite{Blass}]
The following are equivalent:
\begin{enumerate}
\item $\AC$ is forceable.
\item $\mathrm{SVC}$ holds.
\end{enumerate}
\end{theorem}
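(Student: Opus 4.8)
The plan is to prove the two implications separately, taking care that no step silently uses $\AC$. \emph{From $\mathrm{SVC}$ to the forceability of $\AC$:} fix a set $A$ witnessing $\mathrm{SVC}$; replacing $A$ by $A\cup\omega$ we may assume $\omega\subseteq A$. Let $\bbP$ be the poset of finite partial functions from $\omega$ into $A$ ordered by reverse inclusion, and let $G$ be $(V,\bbP)$-generic. For each $n<\omega$ the set $\{p:n\in\dom(p)\}$ is dense, and for each $a\in A$ the set $\{p:a\in\range(p)\}$ is dense, so $g=\bigcup G$ is a surjection from $\omega$ onto $A$; hence $A$ is countable in $V[G]$. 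Now let $Y\in V[G]$ be arbitrary, say $Y=\dot\tau^{G}$ with $\dot\tau\in V$ a $\bbP$-name, and assume $Y\neq\emptyset$ (otherwise $Y$ is trivially well-orderable), fixing $y_{0}\in Y$. The map sending $\dot\sigma\in\dom(\dot\tau)$ to $\dot\sigma^{G}$ whenever $\seq{p,\dot\sigma}\in\dot\tau$ for some $p\in G$, and to $y_{0}$ otherwise, is a surjection of $\dom(\dot\tau)\in V$ onto $Y$, and it is defined outright from $\dot\tau$, $G$, $y_{0}$ -- no choice function appears. By $\mathrm{SVC}$ in $V$ there are an ordinal $\alpha$ and a surjection $A\times\alpha\to\dom(\dot\tau)$; composing this with the previous map and using $g$ to replace $A$ by $\omega$, we obtain in $V[G]$ a surjection $\omega\times\alpha\to Y$. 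Since $\omega\times\alpha$ is well-orderable, $Y$ is well-orderable in $V[G]$, so $V[G]\models\AC$.

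\emph{From the forceability of $\AC$ to $\mathrm{SVC}$:} let $\bbP$ force $\AC$; I claim $\bbP$ itself witnesses $\mathrm{SVC}$. Fix a set $Y$, which we may take nonempty with $y_{0}\in Y$. Since $\bbP$ forces ``$\check Y$ is well-orderable'', unwinding the definition of the forcing relation at the two existential quantifiers (and deciding the ordinal) shows that the set of $q\in\bbP$ for which there exist an ordinal $\beta$ and a $\bbP$-name $\dot h$ with $q\Vdash$``$\dot h:\check\beta\to\check Y$ is surjective'' is dense in $\bbP$, hence nonempty; fix one such $q=p_{0}$ together with a witnessing ordinal $\alpha$ and name $\dot h$ (forcing adds no ordinals, so $\alpha\in V$). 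Working in $V$, define $f:\bbP\times\alpha\to Y$ by letting $f(p,\xi)$ be the unique $y\in Y$ with $p\le p_{0}$ and $p\Vdash\dot h(\check\xi)=\check y$ if such a $y$ exists -- it is unique because $p\Vdash\check y=\check{y'}$ implies $y=y'$ -- and $f(p,\xi)=y_{0}$ otherwise. Given $y\in Y$, since $p_{0}$ forces $\dot h$ onto $\check Y$ we have $p_{0}\Vdash\exists\xi<\check\alpha\,(\dot h(\xi)=\check y)$, so $\{p\le p_{0}:\exists\xi<\alpha\ (p\Vdash\dot h(\check\xi)=\check y)\}$ is dense below $p_{0}$ and in particular nonempty, whence $y\in\range(f)$. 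Thus $f$ is a surjection of $\bbP\times\alpha$ onto $Y$, and since $\bbP$ did not depend on $Y$, $\bbP$ witnesses $\mathrm{SVC}$.

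The real care is needed not in the forcing manipulations, which are routine, but in the choiceless bookkeeping around them: in the first implication one must verify that the covering of an arbitrary $Y\in V[G]$ by a ground-model set is realized by an explicitly defined map rather than by a chosen transversal; in the second one must notice that for a fixed $Y$ it is legitimate to fix a single triple $(p_{0},\alpha,\dot h)$ -- this is an instance of an existential statement, not a choice over a family, so the maximal principle (which needs $\AC$ in $V$) is never invoked -- and that the resulting surjection $f$ is then produced with no appeal to $\AC$ in $V$. Once these two points are in place, both directions go through.
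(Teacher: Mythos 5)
Your proof is correct, and both directions are essentially Blass's original argument (collapse the $\mathrm{SVC}$ witness to a countable set with finite partial functions, then cover each $Y\in V[G]$ by $\dom(\dot\tau)$; conversely read a surjection $\bbP\times\alpha\to Y$ off the forcing relation), with the choiceless bookkeeping handled properly. Note that the paper states this theorem only as a citation to Blass and gives no proof of its own, so there is no in-paper argument to compare against.
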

Blass also showed that $\mathrm{SVC}$ holds in many choiceless models, such as symmetric models.
So such models have proper class many LS cardinals,
and all grounds are uniformly definable.
We give another characterization, which tells us that
$\AC$ is forceable if
and only if $V$ is a \emph{small} extension of a model of $\ZFC$.
For a transitive model $W$ of $\ZF$ and a set $X$,
let $W(X)$ be the minimal transitive model of $\ZF$ with
$W \subseteq W(X)$ and $X \in W(X)$ (see Definition \ref{W(X)} below).
\begin{theorem}\label{thm6}
The following are equivalent:
\begin{enumerate}
\item $\AC$ is forceable.
\item There is a transitive model $W$ of $\ZFC$ and a set $X$ such that
$W$ is definable in $V$ with parameters from $W$ and $V=W(X)$.
\item There is a transitive model $W$ of $\ZFC$ and a set $X$ such that
$W$ is definable in $V$ with parameters from $W$, $V=W(X)$,
and $W$ is a ground of some generic extension of $V$.
\end{enumerate}
\end{theorem}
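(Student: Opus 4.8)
The plan is to prove the cycle $(1)\Rightarrow(3)\Rightarrow(2)\Rightarrow(1)$; the implication $(3)\Rightarrow(2)$ is trivial. For $(2)\Rightarrow(1)$, I would show that $V$ satisfies $\mathrm{SVC}$ and invoke Blass's theorem. Recall that $W(X)$ is the union of a relative-constructibility hierarchy over $W$ (treated as a class predicate), started from $\mathrm{trcl}(\{X\})$, with definable closure at successor stages and with one further rank-segment $W\cap V^W_\beta$ of $W$ adjoined at each stage. A transfinite recursion shows that each level of this hierarchy is a surjective image of $\gamma\times{}^{<\om}\mathrm{trcl}(\{X\})$ for some ordinal $\gamma$: definable closure only multiplies cardinality by a factor that ${}^{<\om}\mathrm{trcl}(\{X\})$ absorbs (being closed under finite sequences up to a definable bijection, and one may assume $\mathrm{trcl}(\{X\})$ has at least two elements), and each $W\cap V^W_\beta$ is well-orderable inside $W$ since $W\models\ZFC$, hence a surjective image of an ordinal, the requisite sequences of well-orderings being available inside $W$ by Choice and Replacement, so that the witnessing surjections are genuine sets of $V$. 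Hence every set of $V=W(X)$ is a surjective image of $\gamma\times{}^{<\om}\mathrm{trcl}(\{X\})$ for some $\gamma$, so $\mathrm{SVC}$ holds in $V$ with witness ${}^{<\om}\mathrm{trcl}(\{X\})$, and Blass's theorem gives that $\AC$ is forceable.

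For $(1)\Rightarrow(3)$: if $V\models\AC$, take $W=V$ and $X=\emptyset$. Otherwise $\mathrm{SVC}$ holds by Blass's theorem; let $A:=V_\theta$ for the least $\theta$ such that $A$ is an $\mathrm{SVC}$-witness, and let $\bbP$ be the poset of finite partial injections of $A$ into its Hartogs number, ordered by reverse inclusion. Then $\bbP$ is definable without parameters and weakly homogeneous (permutations of the codomain act as automorphisms, and any two conditions, being finite, can be made compatible), and $\bbP$ forces $A$ to be well-orderable, hence --- $\mathrm{SVC}$ persisting to the extension with the same witness --- forces $\AC$. Fix a $(V,\bbP)$-generic $G$; then $V[G]\models\ZFC$ and $V$ is a ground of $V[G]$. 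I would then take $W$ to be a transitive model of $\ZFC$ that is a ground of $V[G]$ and is contained in $V$ --- the natural choice being the mantle of $V[G]$, which is a ground of $V[G]$ by Usuba's theorem that $\ZFC$ implies downward directedness of grounds, which is contained in $V$ since $V$ is among the grounds of $V[G]$, and which is definable in $V$ because the mantle is parameter-free definable in $V[G]$ and, $\bbP$ being parameter-free definable and weakly homogeneous, for $a\in V$ one has $a\in W$ iff $\mathbf 1_\bbP\Vdash$ ``$\check a$ lies in the mantle.'' Since $W$ is a ground of the generic extension $V[G]$ of $V$, the ground clause of $(3)$ holds; and since $W\subseteq V\subseteq V[G]$ with $V[G]$ a set-generic extension of the $\ZFC$-model $W$ and $V$ a $\ZF$-model, the standard analysis of intermediate transitive models yields a set $X\in V$ with $V=W(X)$. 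Thus $(3)$ holds, and $(2)$ follows.

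The step I expect to be the crux is the choice of $W$ in $(1)\Rightarrow(3)$: one needs a single inner model that is at once a model of $\ZFC$, a ground of a set-generic extension of $V$, definable in $V$ from parameters lying in $W$ itself, and co-small in $V$ in the sense $V=W(X)$ --- and the obvious canonical inner models each meet only some of these demands. The mantle of $V$ and $\HOD^{V[G]}$ need not be grounds of any generic extension of $V$; the $\ZFC$-mantle of $V[G]$ need not sit inside the choiceless ground $V$ unless one first knows enough directedness for the \emph{full} family of grounds of $V[G]$, which is precisely where the availability, under $(1)$, of proper class many L\"owenheim--Skolem cardinals (hence of Theorem~\ref{thm1} applied in $V[G]$) does real work. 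Making all four requirements hold simultaneously, together with verifying the weak homogeneity and definability of the $\AC$-forcing and the intermediate-model representation $V=W(X)$, is where the substance of the proof lies.
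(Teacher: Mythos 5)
Your implication $(2)\Rightarrow(1)$ is correct, though it takes a different route from the paper: you verify $\mathrm{SVC}$ with witness ${}^{<\om}\mathrm{trcl}(\{X\})$ by a rank analysis of the $W(X)$ hierarchy and then invoke Blass, whereas the paper argues directly that after forcing with $\col(\mathrm{trcl}(X))$ every element of $W(X)$, and hence of the extension, becomes well-orderable. Both work; yours routes through Blass's theorem, the paper's is self-contained.

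The implication $(1)\Rightarrow(3)$, however, has a genuine gap at exactly the point you flag as the crux: you take $W$ to be the mantle of $V[G]$ and assert it is a ground of $V[G]$ ``by Usuba's theorem that $\ZFC$ implies downward directedness of grounds.'' That theorem gives only \emph{set}-directedness of the ($\AC$-)grounds; it implies the mantle is a model of $\ZFC$, but not that it is a ground. In general $\ZFC$ models there need be no least ground at all (the grounds can form a strictly descending class chain), and then the mantle is not a ground of $V[G]$, is not a ground of any generic extension of $V$, and Grigorieff's theorem cannot be applied to get $V=W(X)$. There is also a circularity in the containment $W\subseteq V$: if you take the mantle to be the intersection of the $\AC$-grounds of $V[G]$ (the only version known to be a $\ZFC$ model), its inclusion in the choiceless ground $V$ requires an $\AC$-ground of $V[G]$ inside $V$ --- which is essentially what is to be proved; and the LS-cardinal machinery you invoke gives uniform definability of grounds, not directedness of the full family including choiceless ones. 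The paper's actual device sidesteps all of this: take $H$ to be $(V[G],\bbP)$-generic so that $G\times H$ is $(V,\bbP\times\bbP)$-generic with $V[G],V[H]\models\AC$, apply the DDG \emph{inside $V[G\times H]$} to the two $\AC$-grounds $V[G]$ and $V[H]$ to obtain a common $\AC$-ground $W$, and conclude $W\subseteq V[G]\cap V[H]=V$ by Solovay's intersection theorem; then $V=W(X)$ by Grigorieff since $V$ is intermediate between the ground $W$ and its generic extension $V[G]$, and definability of $W$ in $V$ (rather than in $V[G]$) comes from the norm covering and approximation properties together with the existence of proper class many LS cardinals, not from homogeneity of the forcing.
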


This characterization clarifies 
the structure of all grounds of $V$ under $\AC$.
\begin{theorem}
Suppose $V$ satisfies $\AC$.
Then for every transitive model $M$ of $\ZF$,
$M$ is a ground of $V$ if and only if
there is a ground $W$ of $V$ and a set $X$ such that
$W$ satisfies $\AC$ and $M=W(X)$.
In particular the collection of all grounds satisfying $\AC$
is dense in all grounds, with respect to $\subseteq$.
\end{theorem}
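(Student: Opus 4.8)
The plan is to prove the two implications separately; the implication $\Leftarrow$ is a ``preservation'' fact about forcing, while $\Rightarrow$ is Theorem~\ref{thm6} relativised to the ground $M$. Granting the equivalence, the ``in particular'' is immediate: if $M$ is a ground of $V$ and $M=W(X)$ with $W$ a ground satisfying $\AC$, then $W\subseteq W(X)=M$, so below every ground lies an $\AC$-ground; since $V$ is itself an $\AC$-ground, the $\AC$-grounds form a nonempty, hence $\subseteq$-dense, subcollection of all grounds.

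For $\Leftarrow$: suppose $W$ is a ground of $V$ with $W\models\AC$ and $M=W(X)$ for a set $X$. Fix $\bbP\in W$ and a $(W,\bbP)$-generic $G$ with $V=W[G]$; then $V\models\ZFC$, $X\in V$, $W\subseteq M\subseteq V$, and $\mathrm{Ord}^M=\mathrm{Ord}^V$. Let $\kappa=(|\bbP|^+)^W$. For $f\colon\alpha\to\mathrm{Ord}$ in $V$, pick a $\bbP$-name $\dot f\in W$ and put $F(\xi)=\{\beta:\exists p\in\bbP\,(p\Vdash_W\dot f(\check\xi)=\check\beta)\}$ for $\xi<\alpha$; then $F\in W$, each $F(\xi)$ is a set of ordinals of size $<\kappa$, and $f(\xi)\in F(\xi)$. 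So $V$ has the covering property over $W$ with bound $\kappa$, and since $W\subseteq M$ the same $F$ shows $V$ has the covering property over $M$ with the same bound. I would then apply a $\ZF$-version of Bukovsk\'y's characterisation of set-generic extensions --- if a transitive $N\models\ZF$ with $\mathrm{Ord}^N=\mathrm{Ord}$ has the covering property in $V\models\ZFC$, then $V$ is a set-generic extension of $N$ --- with $N=M$, to conclude that $M$ is a ground of $V$.

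For $\Rightarrow$: let $M$ be a ground of $V$, say $V=M[G]$ with $\bbP\in M$ and $G$ being $(M,\bbP)$-generic. Since $V\models\AC$ and the conditions deciding $\AC$ are dense in $\bbP$, there is $p\in G$ with $p\Vdash_M\AC$, so $\AC$ is forceable in $M$, witnessed by the cone of $\bbP$ below $p$, with $M[G]=V$ a corresponding extension. Applying Theorem~\ref{thm6} inside $M$ --- and, in the argument for clause $(1)\Rightarrow(3)$, using this witnessing poset together with the generic $G$ --- yields a transitive $W\models\ZFC$, definable in $M$ with parameters from $W$, and a set $X\in M$, with $M=W(X)$ and $W$ a ground of $M[G]=V$. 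As $W\models\ZFC$, in particular $W\models\AC$, the pair $(W,X)$ is as required.

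I expect two points to demand care beyond routine verification. First, $\Leftarrow$ invokes a choiceless form of Bukovsk\'y's theorem, whereas the classical proof manufactures a complete Boolean algebra inside the inner model, a priori using $\AC$ there, and here $M=W(X)$ may fail $\AC$; one must either isolate the needed $\ZF$-version (the approximating algebra involves only subsets of well-orderable sets of ordinals, so the construction ought to survive) or give a direct forcing-theoretic argument that $W[G]$ is a set-generic extension of $W(X)$, which is delicate precisely because $W(X)$ may fail $\AC$. Second, Theorem~\ref{thm6} as stated yields only that $W$ is a ground of \emph{some} generic extension, so in $\Rightarrow$ one must read off from its proof that the canonically constructed $W$ --- an $\HOD$-like inner model of $M$ --- is a ground of the \emph{specific} extension $M[G]=V$; this holds because that construction produces a ground of every $\AC$-forcing-extension of $M$. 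For orientation, though not strictly needed above: $\mathrm{SVC}$ always holds in a model $W(X)$ with $W\models\AC$ --- witnessed by the set of finite sequences from $\mathrm{trcl}(\{X\})$ together with a well-ordering of each $V^W_\gamma$ supplied by $\AC$ in $W$ --- so by Blass's theorem \cite{Blass} such an $M$ always has $\AC$ forceable, in harmony with the equivalence.
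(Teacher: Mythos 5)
Your right-to-left direction has a genuine gap. You reduce ``$M=W(X)$ with $W$ a ground implies $M$ is a ground'' to a covering-property computation followed by ``a $\ZF$-version of Bukovsk\'y's characterisation of set-generic extensions'' applied to the possibly choiceless intermediate model $M$. That version is exactly the point at issue: the classical proof builds a $\kappa$-cc complete Boolean algebra of infinitary formulas \emph{inside} the inner model, using $\AC$ there in an essential way, and no such theorem is proved (or cited) in your argument for an inner model that may fail $\AC$. You flag this yourself but leave it unresolved, so the implication is not established. The paper does not go anywhere near Bukovsk\'y: it invokes Grigorieff's intermediate-model theorem (Theorem~\ref{4.2}, Theorem B of \cite{G}), which says precisely that for a ground $W\subseteq V$ and a transitive $\ZF$-model $N$ with $W\subseteq N\subseteq V$, $V$ is a generic extension of $N$ if and only if $N=W(Y)$ for some $Y\in N$. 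Applied with $N=M=W(X)$ this gives the direction in one line, with no choice needed in $M$ and no covering computation at all. Your ``direct forcing-theoretic argument'' escape hatch is exactly this theorem; as written, though, the step is missing.

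The left-to-right direction and the ``in particular'' clause are essentially correct and match the paper's route, only slightly more circuitous: the paper applies Proposition~\ref{4.55} directly over $M$ (with $V$ as the $\AC$-satisfying extension of $M$), which immediately yields a ground $W$ of $V$ with $W\models\AC$ and $M=W(X)$. You instead quote Theorem~\ref{thm6} relativised to $M$ and then correctly observe that one must reopen its proof to see that the constructed $W$ is a ground of the \emph{specific} extension $M[G]=V$ rather than merely of some extension; that reading-off is legitimate, since clause $(1)\Rightarrow(3)$ factors through Proposition~\ref{4.55}, but citing the proposition directly avoids the detour. Your covering computation in the other direction is fine as far as it goes (it correctly uses $\AC$ only in $W$, not in $M$), but it does not feed into any theorem you are entitled to use.
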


%

\section{Preliminaries}

In this paper,  we say that a collection $M$ of sets is a \emph{class} of $V$ if
$(V;\in,M)$ satisfies the collection scheme,
that is, for every formula $\varphi$ in the language $\{\in, M\}$
(where we identify $M$ as a unary predicate) and all sets $a, v_0,\dotsc, v_n$,
if the sentence \hbox{ $\forall b \in a \exists c\, \varphi(b,c, v_0,\dotsc, v_n) $} holds in $V$,
then there is a set $d$ such that
the sentence \hbox{$\forall b \in a \exists c \in d\,\varphi(b,c,\dotsc, v_n)$} holds.
A class  needs not be definable in $V$,
but every definable collection of sets is a class in our sense.
Note also that, by the forcing theorem, if $W$ is a ground of $V$,
then $W$ is a class of $V$.

The following fact is well-known. See e.g. Jech \cite{Jech} for the definitions and the proof.
\begin{theorem}\label{basic ZF}
Let $M$ be a transitive class containing all ordinals.
Then $M$ is a model of $\ZF$ if and only if
$M$ is closed under the G\"odel operations and
$M$ is almost universal,
that is, for every set $x \subseteq M$, there is $y \in M$ with $x \subseteq y$.
\end{theorem}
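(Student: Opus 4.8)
\noindent The plan is to prove the two implications separately, with essentially all of the work in the direction from the closure conditions to the axioms of $\ZF$. For the forward direction, suppose $M \models \ZF$. Each G\"odel operation has an absolute, $\Delta_0$-definable graph, and $\ZF$ proves that the relevant sets (unordered pairs, differences, products, and so on) exist; since $M$ is transitive, these witnesses are computed correctly, so $M$ is closed under the G\"odel operations. For almost universality, take a set $x \subseteq M$: every $a \in x$ lies in $M$, and by Replacement in $V$ the ranks $\{\mathrm{rank}(a) : a \in x\}$ are bounded by some ordinal $\alpha$. Since $M \models \ZF$ is transitive and rank is absolute, $V_\alpha \cap M = (V_\alpha)^M \in M$, and $x \subseteq V_\alpha \cap M$, which is the required bound.

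The converse is the substantial direction. The first step is the standard reduction: closure under the G\"odel operations is equivalent to closure under $\Delta_0$-definable subsets, i.e. for every $\Delta_0$ formula $\varphi(x,\vec p\,)$ and all $a,\vec p \in M$ the set $\{x \in a : \varphi(x,\vec p\,)\}$ lies in $M$. This is the G\"odel normal-form lemma, proved by induction on the construction of the bounded formula $\varphi$, rewriting each connective and each bounded quantifier as a finite composition of G\"odel operations; I would import this as the key combinatorial input. Combined with the absoluteness of $\Delta_0$ formulas between the transitive $M$ and $V$, it yields the easy axioms at once: Extensionality and Foundation from transitivity, Pairing and Union from closure under the corresponding operations, and Infinity because $\omega \in M$.

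The remaining axioms --- Power Set, Separation, and Replacement --- all funnel through the single lemma that $V_\alpha \cap M \in M$ for every ordinal $\alpha$, proved by induction on $\alpha$. At a successor stage, $V_{\alpha+1} \cap M = \{x \in M : x \subseteq V_\alpha \cap M\}$ is a subset of $M$, so by almost universality it lies inside some $y \in M$, and then $\Delta_0$-separation with parameter $V_\alpha \cap M$ carves it out of $y$. Granting the lemma, Power Set follows the same pattern ($\mathcal{P}(a)\cap M$ is covered by a member of $M$, then separated via the $\Delta_0$ predicate $x \subseteq a$); Separation for an arbitrary formula follows by bounding its quantifiers inside some $V_\beta \cap M \in M$ so that the formula becomes $\Delta_0$; and Replacement follows by using Replacement in $V$ to bound the ranks of witnesses by an ordinal $\beta$ and then taking $V_\beta \cap M$ as the collecting set.

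I expect the genuine obstacle to be the limit stage of the induction for $V_\alpha \cap M \in M$, where the naive description $V_\lambda \cap M = \bigcup_{\beta<\lambda}(V_\beta \cap M)$ is not visibly $\Delta_0$, since ``$\mathrm{rank}(x) < \lambda$'' is not a bounded condition. The remedy is to strengthen the induction hypothesis to assert that the entire sequence $\langle V_\beta \cap M : \beta < \alpha\rangle$ belongs to $M$: at a limit $\lambda$ this sequence is a subset of $M$, hence contained in some $S \in M$, and it is definable from $S$ by the recursion characterizing each value as $\{x \in M : x \subseteq \bigcup (\text{earlier values})\}$, after which $V_\lambda \cap M$ is the union of its range and so lies in $M$ by closure under unions. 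Checking that this recursion is genuinely captured by $\Delta_0$-separation over $S$ --- in particular that the unbounded clause ``$x \in M$'' can be absorbed because the relevant $x$ are already confined to a set in $M$ --- is the delicate point, and is where I would concentrate the care of the argument.
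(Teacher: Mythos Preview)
The paper does not actually prove this theorem: it is stated as a well-known fact with a reference to Jech's textbook, so there is no ``paper's proof'' to compare against. Your outline is essentially the standard argument and is correct in its overall shape.

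That said, you are creating unnecessary work for yourself by routing Power Set, Separation, and Replacement through the lemma $V_\alpha \cap M \in M$, whose limit case you rightly flag as delicate. The cleaner organisation (which is closer to Jech's) avoids that lemma entirely. Power Set goes exactly as you describe, directly from almost universality and $\Delta_0$-separation. Collection also follows directly: given $a,\vec p \in M$ and a formula $\varphi$, use Replacement in $V$ to bound the ranks of all witnesses $y\in M$ with $M\models\varphi(x,y,\vec p)$ for some $x\in a$; the resulting set of witnesses is a subset of $M$, hence covered by some $b\in M$ by almost universality --- you never need the specific set $V_\beta\cap M$ to lie in $M$, only that some cover exists. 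Finally, full Separation is proved by induction on the complexity of the formula: the only nontrivial step is an outermost $\exists y\,\psi$, and Collection supplies a bound $b\in M$ so that $\{x\in a: M\models \exists y\,\psi\}=\{x\in a:\exists y\in b\ M\models\psi\}$, which falls under the induction hypothesis once you project out of $\{\langle x,y\rangle\in a\times b: M\models\psi\}$.

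With this ordering, the fact that $V_\alpha\cap M\in M$ becomes a \emph{consequence} of $M\models\ZF$ (it is just $(V_\alpha)^M$) rather than an ingredient in the proof, and the limit-stage issue you were worried about simply disappears.
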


For a transitive model $M$ of $\ZF$ and an ordinal $\alpha$,
let $M_\alpha$ be the set of all $x \in M$ with rank $<\alpha$.

We can develop a standard theory of the forcing method without $\AC$.
See e.g. Grigorieff \cite{G} for the following facts:
\begin{theorem}\label{universality}
Let $V[G]$ be a forcing extension of $V$ via a poset $\bbP\in V$,
and $V[G][H]$ of $V[G]$ via a poset $\bbQ\in V[G]$.
Then there is a poset $\bbR \in V$ and a $(V, \bbR)$-generic $G'$
such that $V[G][H]=V[G']$.
\end{theorem}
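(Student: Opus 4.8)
The plan is to realize $V[G][H]$ as a single forcing extension of $V$ by the two-step iteration $\bbR = \bbP * \dot{\bbQ}$, and to check that this classical construction goes through without $\AC$. First I would fix a $\bbP$-name $\dot{\bbQ}$ for the poset $\bbQ$ --- taken, as usual, to be a name for the underlying set, with a companion name for the ordering; below any condition not forcing ``$\dot{\bbQ}$ is a poset'' I replace it by a name for the trivial poset, a definable modification needing no choice, so I may assume every condition of $\bbP$ forces ``$\dot{\bbQ}$ is a poset''. Fix an ordinal $\theta$ of $V$ with $\dot{\bbQ} \in V_\theta$. In $V$, let $\bbR$ consist of the pairs $(p,\dot q)$ with $p \in \bbP$, $\dot q$ a $\bbP$-name of rank $<\theta$, and $p \Vdash$ ``$\dot q \in \dot{\bbQ}$'', ordered by $(p',\dot q') \le (p,\dot q)$ iff $p'\le_{\bbP}p$ and $p' \Vdash$ ``$\dot q' \le_{\dot{\bbQ}} \dot q$''. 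Since the forcing relation is definable over $V$ in $\ZF$ (Grigorieff \cite{G}), $\bbR$ is a definable subclass of the set $\bbP \times V_\theta$, hence a genuine set and a poset in $V$.

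The one place where the standard $\ZFC$ argument uses choice --- a ``nice name'' theorem guaranteeing that $\bbP * \dot{\bbQ}$ is set-sized --- is bypassed here: rather than asking that every element of $\bbQ^{V[G]}$ have a name of bounded rank, we only need each such element to be named by some $\dot q$ that occurs inside the fixed name $\dot{\bbQ}$, which is automatic. Indeed every $q \in \dot{\bbQ}^{G}$ is $\dot a^{G}$ for some pair $(\dot a,t)\in\dot{\bbQ}$ with $t\in G$, and then $\dot a$ has rank $<\theta$ since it occurs in $\dot{\bbQ}\in V_\theta$; this is all the ``niceness'' required, and it is computed inside $V$.

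Next I would put $G' := \{(p,\dot q)\in\bbR : p\in G \text{ and } \dot q^{G}\in H\}$ and check that $G'$ is $(V,\bbR)$-generic with $V[G']=V[G][H]$. It is a filter because $G$ and $H$ are. For genericity, given a dense $D\subseteq\bbR$ with $D\in V$, one shows $D_G:=\{\dot q^{G}:(p,\dot q)\in D,\ p\in G\}$ is dense in $\bbQ$ in $V[G]$: given $q=\dot a^{G}\in\bbQ$, choose $s\in G$ with $s\Vdash$ ``$\dot a\in\dot{\bbQ}$'' (truth lemma); then $\{p\le_{\bbP}s : \exists\dot q\,[(p,\dot q)\in D \text{ and } p\Vdash\text{``}\dot q\le_{\dot{\bbQ}}\dot a\text{''}]\}$ is dense below $s$ by density of $D$ in $\bbR$, and genericity of $G$ then yields an element of $D_G$ below $q$. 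As $H$ meets $D_G$, unwinding the definitions gives $G'\cap D\ne\emptyset$. Finally, using $H\ne\emptyset$ one recovers $G$ from $G'$ as the upward closure of $\{p:\exists\dot q\,(p,\dot q)\in G'\}$ and $H$ as $\{\dot q^{G}:(p,\dot q)\in G'\}$, both definable from $G'$ over $V$; together with $G'\in V[G][H]$ this gives $V[G']=V[G][H]$.

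The main obstacle is exactly the bookkeeping of the first two paragraphs --- keeping $\bbP*\dot{\bbQ}$ a set in the absence of the $\ZFC$ nice-name machinery --- and it is resolved by working only with the names already occurring in $\dot{\bbQ}$. Everything else is part of the choiceless forcing theory of Grigorieff \cite{G}: the definability of forcing, the truth lemma, and the fact that a forcing extension of a model of $\ZF$ again models $\ZF$; none of it uses $\AC$. The converse --- that every $(V,\bbR)$-generic has the form $G'=G*H$ --- is also standard, via the projection argument, but it is not needed for the statement.
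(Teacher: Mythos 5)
Your proof is correct. The paper gives no argument for this statement at all --- it simply cites Grigorieff --- and what you have written is the standard two-step iteration proof, with the one genuinely choice-sensitive point (keeping $\bbP * \dot{\bbQ}$ set-sized without nice names or mixing over maximal antichains) correctly handled by restricting to the $\bbP$-names of rank below a bound on $\dot{\bbQ}$, which suffice to name every element of $\bbQ$ in $V[G]$.
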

This fact shows that if $M$ is a ground of $V$ and $W$ is of $M$,
then $W$ is a ground of $V$ as well.

A poset $\bbP$ is \emph{weakly homogeneous} if
for every $p, q \in \bbP$,
there is an automorphism $f:\bbP\to \bbP$ such that
$f(p)$ is compatible with $q$.
\begin{theorem}\label{2.4.1}
Suppose $\bbP$ is a weakly homogeneous poset.
For every $x_0,\dotsc, x_n \in V$ and  formula $\varphi$,
either $\Vdash_{\bbP}\varphi(x_0,\dotsc, x_n)$ or
$\Vdash_{\bbP}\neg \varphi(x_0,\dotsc, x_n)$ in $V$.
\end{theorem}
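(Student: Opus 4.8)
The plan is to derive a contradiction from the assumption that neither $\Vdash_{\bbP}\varphi(x_0,\dotsc,x_n)$ nor $\Vdash_{\bbP}\neg\varphi(x_0,\dotsc,x_n)$ holds, where $\Vdash_{\bbP}\psi$ abbreviates $1_{\bbP}\Vdash_{\bbP}\psi$ for the top condition $1_{\bbP}$ and where $\varphi(x_0,\dotsc,x_n)$ is understood as $\varphi(\check x_0,\dotsc,\check x_n)$ using the canonical names $\check x_i$. The forcing relation is definable in $\ZF$ and enjoys the usual extension and consistency clauses: $r\Vdash_{\bbP}\neg\psi$ iff no $s\le r$ satisfies $s\Vdash_{\bbP}\psi$, and strengthening preserves forcing. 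Hence the failure of $\Vdash_{\bbP}\varphi$ yields a condition $q$ with $q\Vdash_{\bbP}\neg\varphi(\check x_0,\dotsc,\check x_n)$, and the failure of $\Vdash_{\bbP}\neg\varphi$ yields a condition $p$ with $p\Vdash_{\bbP}\varphi(\check x_0,\dotsc,\check x_n)$. The goal is to move $p$ by an automorphism so that it becomes compatible with $q$, producing a single condition forcing both $\varphi$ and $\neg\varphi$.

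The first ingredient is the symmetry (invariance) lemma. Any automorphism $f$ of $\bbP$ lifts canonically to an action on $\bbP$-names by the $\in$-recursion $f(\tau)=\{(f(\sigma),f(r)) : (\sigma,r)\in\tau\}$, and one proves by simultaneous induction on the rank of names and on formula complexity that for every condition $r$, formula $\psi$, and names $\tau_0,\dotsc,\tau_n$,
\[
r\Vdash_{\bbP}\psi(\tau_0,\dotsc,\tau_n)\quad\Longleftrightarrow\quad f(r)\Vdash_{\bbP}\psi(f(\tau_0),\dotsc,f(\tau_n)).
\]
I would carry out this induction along the recursive definition of $\Vdash_{\bbP}$: the atomic cases $\sigma\in\tau$ and $\sigma=\tau$ use that $f$ preserves the ordering, compatibility, and density, while the connectives and quantifiers are immediate since $f$ is a bijection on names. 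Crucially this argument appeals to no choice, being a bare induction on von Neumann rank and on complexity.

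The second ingredient is that automorphisms fix canonical names, i.e. $f(\check x)=\check x$ for every $x\in V$. This follows by $\in$-induction on $x$: since $f(1_{\bbP})=1_{\bbP}$ and $\check x=\{(\check y,1_{\bbP}) : y\in x\}$, we get $f(\check x)=\{(f(\check y),f(1_{\bbP})) : y\in x\}=\{(\check y,1_{\bbP}) : y\in x\}=\check x$ by the inductive hypothesis. Consequently the symmetry lemma specializes, for check-name arguments, to the statement that $r\Vdash_{\bbP}\varphi(\check x_0,\dotsc,\check x_n)$ iff $f(r)\Vdash_{\bbP}\varphi(\check x_0,\dotsc,\check x_n)$, for any automorphism $f$.

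Finally I would invoke weak homogeneity. Applied to the pair $(p,q)$ it yields an automorphism $f$ of $\bbP$ with $f(p)$ compatible with $q$; let $r$ be a common extension. From $p\Vdash_{\bbP}\varphi(\check x_0,\dotsc,\check x_n)$ the specialized symmetry lemma gives $f(p)\Vdash_{\bbP}\varphi(\check x_0,\dotsc,\check x_n)$, and strengthening gives $r\Vdash_{\bbP}\varphi(\check x_0,\dotsc,\check x_n)$. But $r\le q$ and $q\Vdash_{\bbP}\neg\varphi(\check x_0,\dotsc,\check x_n)$, so also $r\Vdash_{\bbP}\neg\varphi(\check x_0,\dotsc,\check x_n)$, and the consistency clause for a single condition is violated. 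This contradiction establishes one of the two alternatives. I expect the only genuinely delicate point to be the choiceless verification of the symmetry lemma: one must confirm that the recursive definition of $\Vdash_{\bbP}$ and the basic clauses used in the atomic cases are all developed in $\ZF$ without well-ordering the names or $\bbP$. Since the standard development (as in Grigorieff \cite{G}) never uses such a well-ordering, the argument goes through verbatim.
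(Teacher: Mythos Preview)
Your argument is correct and is precisely the standard proof of this fact. The paper does not give its own proof of this theorem: it is listed among the preliminary facts about forcing in $\ZF$ with a blanket reference to Grigorieff~\cite{G}, so there is no in-paper argument to compare against. What you have written is exactly the classical proof one finds in the literature, and your care in noting that the symmetry lemma and the fixing of check names go through by $\in$-recursion without any appeal to choice is appropriate given the choiceless setting of the paper.
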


For a set $S$,
let $\col(S)$ be the poset consisting of all finite partial functions from
$\om$ to $S$ ordered by reverse inclusion.
$\col(S)$ is weakly homogeneous, and  if $S$ is an ordinal definable set then
so is $\col(S)$.
\begin{theorem}\label{2.4.2}
Let $\bbP$  be a poset, and $G$ be $(V, \bbP)$-generic.
Let $\alpha$ be a limit ordinal with $\alpha>\mathrm{rank}(\bbP)\cdot \om$.
Let $H$ be $(V[G], \col(V[G]_\alpha))$-generic.
Then there is a $(V, \col(V_\alpha))$-generic $H'$ with
$V[G][H]=V[H']$.
\end{theorem}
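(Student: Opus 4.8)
The plan is to reduce the statement to a forcing equivalence computed in $V$ and then to transfer generics across it. Write $\mathbb{I} = \bbP * \dot\col(V[\dot G]_\alpha)$ for the two‑step iteration as a poset of $V$; then $V[G][H]$ is exactly the $\mathbb{I}$‑generic extension of $V$ determined by the pair $(G,H)$, and conversely $(G,H)$ is recoverable from the corresponding $\mathbb{I}$‑generic filter, so $V[G][H]=V[\text{that filter}]$. Hence it suffices to prove, in $V$, that $\mathbb{I}$ and $\col(V_\alpha)$ have isomorphic Boolean completions: an isomorphism $\mathrm{ro}(\mathbb{I})\cong\mathrm{ro}(\col(V_\alpha))$ carries the $\mathbb{I}$‑generic to some $\col(V_\alpha)$‑generic $H'$ over $V$ with $V[H']=V[G][H]$. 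Weak homogeneity of $\col(V_\alpha)$ is available, but the substance is the equivalence itself.

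First I would carry out the name analysis for which the hypothesis $\alpha>\mathrm{rank}(\bbP)\cdot\om$ is calibrated. Let $N_\alpha\in V$ be the set of $\bbP$‑names of rank $<\alpha$, and note $N_\alpha\subseteq V_\alpha$. By induction on $\xi<\alpha$ I would show that every $x\in V[G]$ with $\mathrm{rank}(x)\le\xi$ has a $\bbP$‑name of rank $<\alpha$: fixing, by the forcing theorem, any name $\dot x_0$ for $x$ and setting $\dot x=\{(\tau,p):\tau\in N_\alpha,\ p\Vdash \tau\in\dot x_0\}$ adds only a bounded constant to the rank over the ranks of the $\tau$'s, so the associated rank function stays below $\alpha$ precisely because $\alpha$ is a limit above $\mathrm{rank}(\bbP)\cdot\om$. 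Crucially this uses only one arbitrarily given name per set, so no appeal to $\AC$ is made. Consequently the evaluation map sends $N_\alpha$ onto $V[G]_\alpha$, and $\mathbb{I}$ has the dense subset $\mathbb{D}=\{(p,\dot q):p\in\bbP,\ \dot q\in N_\alpha,\ \Vdash_\bbP \dot q\in\dot\col(V[\dot G]_\alpha)\}\in V$, with $\mathbb{D}\subseteq V_\alpha$.

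Next I would pin down the size of $\mathbb{D}$. Inclusion gives an injection $\mathbb{D}\hookrightarrow V_\alpha$, while $x\mapsto(1_\bbP,\check{\{(0,x)\}})$ gives an injection $V_\alpha\hookrightarrow\mathbb{D}$; since Cantor--Schr\"oder--Bernstein is available in $\ZF$, there is a bijection between $\mathbb{D}$ and $V_\alpha$ in $V$. Moreover $\mathbb{D}$, being dense in $\mathbb{I}$, forces $V_\alpha$ (indeed all of $V[G]_\alpha$) to be a surjective image of $\om$. The crux is then the following choiceless collapse‑uniqueness principle: \emph{any poset of $V$ that admits a $V$‑bijection with $V_\alpha$ and forces $V_\alpha$ to be a surjective image of $\om$ is forcing‑equivalent to $\col(V_\alpha)$.} Applied to $\mathbb{D}$ this yields $\mathbb{I}\equiv\col(V_\alpha)$ and hence the desired $H'$.

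The hard part will be this collapse‑uniqueness principle without $\AC$. Classically one proves it by well‑ordering both the poset and $\lambda$ and building an isomorphism of Boolean completions by recursion, choosing at each node a maximal antichain of size $\lambda$ below the current condition; every such step uses choice. My plan is to replace these choices with the three ingredients at hand: the $V$‑bijection between $\mathbb{D}$ and $V_\alpha$ to index conditions by elements of $V_\alpha$; a name for a generic surjection $\om\to V_\alpha$ (which exists since $\mathbb{D}$ collapses $V_\alpha$) to produce, below any condition, a canonical $V_\alpha$‑indexed maximal antichain without choosing one; and the weak homogeneity of $\col(V_\alpha)$ to align the two sides (internally passing to separative quotients). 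Executing this back‑and‑forth construction choicelessly, so that it delivers a genuine isomorphism $\mathrm{ro}(\mathbb{D})\cong\mathrm{ro}(\col(V_\alpha))$ rather than merely mutual projections, is where essentially all the difficulty resides; once it is in place, the transfer of generics from the first paragraph completes the proof.
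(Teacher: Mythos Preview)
The paper does not prove this theorem: it is stated in the preliminaries as one of several facts quoted from Grigorieff~\cite{G} (``See e.g.\ Grigorieff \cite{G} for the following facts''), with no argument given. So there is nothing to compare your proposal against in the paper itself.

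That said, your outline is the standard route to this result and is sound. Two remarks. First, a minor simplification: rather than analyzing the full iteration $\bbP * \dot\col(V[\dot G]_\alpha)$ in $V$, it is cleaner to observe that in $V[G]$ there is a bijection $V_\alpha \leftrightarrow V[G]_\alpha$ (your name analysis gives a surjection $V_\alpha \to V[G]_\alpha$, and the inclusion is an injection; apply Cantor--Schr\"oder--Bernstein), hence $\col(V[G]_\alpha)\cong\col(V_\alpha)$ in $V[G]$, and the problem reduces to showing $\bbP \times \col(V_\alpha)\equiv\col(V_\alpha)$ in $V$. This avoids handling names for the second factor. Second, on the ``hard part'': the choiceless collapse-uniqueness principle you isolate is correct and is essentially how Grigorieff proceeds. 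Your idea of using a fixed name $\dot f$ for a surjection $\om\to V_\alpha$ to generate, below each condition, a canonical $V_\alpha$-indexed maximal antichain (via the Boolean values $\llbracket \dot f(n)=s\rrbracket$) is exactly the device that eliminates choice; the density of the resulting copy of $\col(V_\alpha)$ inside $\mathrm{ro}(\mathbb{D})$ follows once $\dot f$ is arranged to enumerate (an injective image of) the generic filter, which is possible since $\mathbb{D}$ injects into $V_\alpha$. So the sketch can be completed, but as you note, that completion is the entire content of the theorem.
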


\begin{definition}\label{W(X)}
For a transitive model $M$ of $\ZF$ containing all ordinals and a set $X$,
let $M(X)=\bigcup_{\alpha \in \mathrm{ON}} L(M_\alpha \cup \{X\})$
\footnote{In \cite{G}, our $M(X)$ is referred to $M[X]$.}.
If $M$ is a class of $V$, 
then 
$M(X)$ is the minimal transitive class model of $\ZF$ with
$M \subseteq M(X)$ and $X \in M(X)$.
\end{definition}

The following useful fact will be applied frequently:
\begin{theorem}[Theorem B in Grigorieff \cite{G}]\label{4.2}
Let $W \subseteq V$ be a ground of $V$.
Let $M$ be a transitive model of $\ZF$ 
and suppose $W \subseteq M \subseteq V$.
Then the following are equivalent:
\begin{enumerate}
\item $V$ is a generic extension of $M$.
\item $M$ is of the form $W(X)$ for some $X \in M$.
\end{enumerate}
\end{theorem}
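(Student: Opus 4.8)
This is Grigorieff's Theorem B; I write $V=W[G]$ with $\bbP\in W$ and $G$ being $(W,\bbP)$-generic, and recall that the generic extension is the least $\ZF$ model containing $W$ and $G$, so $V=W(G)$ in the sense of Definition \ref{W(X)}. One observation organizes everything: if $M=W(X)$ with $X\in M$, then $W(X)(G)=W(G)=V$, since $W\subseteq M\subseteq V$ gives $W(G)\subseteq W(X)(G)\subseteq V$ by the minimality clause of Definition \ref{W(X)} while $V=W(G)$. Thus assuming (2) we automatically get $V=M(G)$, i.e.\ $V$ is generated over $M$ by the single set $G$; and conversely that same minimality makes the inclusion $W(X)\subseteq M$ in (2) free, so only $M\subseteq W(X)$ ever needs proof. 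The two implications then read: (2)$\Rightarrow$(1) is to upgrade ``$V$ is generated over $M$ by one set'' to ``$V$ is a \emph{generic} extension of $M$'', while (1)$\Rightarrow$(2) is to manufacture a single generator of $M$ over $W$ from the generic presentation $V=M[H]$.

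For (2)$\Rightarrow$(1), fix a $\bbP$-name $\dot X\in W$ with $\dot X^{G}=X$. The naive attempt is to force over $M$ with the regular open algebra $\bbB=\mathrm{RO}(\bbP)$ formed in $W$ (which exists already in $\ZF$, being a definable subset of $\mathcal{P}(\bbP)$) fed by the ultrafilter $\bar G$ that $G$ induces on $\bbB$: were $\bar G$ to be $(M,\bbB)$-generic, then $W\subseteq M$ would give $V=W[\bar G]\subseteq M[\bar G]\subseteq V$, so $M[\bar G]=V$. The trouble is that $\bar G$ need not meet the dense subsets of $\bbB$ lying in $M\setminus W$, and $M$ need not be a generic extension of $W$ at all --- symmetric models, where $M=W(A)$ for a set $A$ of mutually generic reals stripped of their enumeration and the quotient turning $M$ back into $V$ is a collapse of $A$, show exactly this. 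So the quotient cannot be a Boolean-subalgebra quotient; instead I would define, inside $M=W(X)$, the poset $\bbQ$ of set-sized approximations (definable from $\bbB$ and $\dot X$) to an object that reconstitutes $\bar G$ from the partial information already carried by $X$, and let $H$ be the approximation read off from $G$. The main obstacle, and the real content of the theorem, is to prove $H$ is $(M,\bbQ)$-generic: without $\AC$ there is no mixing lemma and no free antichain bookkeeping, so genericity must be obtained by a bare-hands density argument, using the almost universality of $M$ (Theorem \ref{basic ZF}) and the homogeneity and collapse-absorption machinery (Theorems \ref{2.4.1}, \ref{2.4.2}) as substitutes. Once $H$ is generic, $M[H]=V$ as computed above, giving (1).

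For (1)$\Rightarrow$(2), suppose $V=M[H]$ with $H$ being $(M,\bbQ)$-generic and $\bbQ\in M$; since $G\in V=M[H]$, fix a $\bbQ$-name $\dot G\in M$ with $\dot G^{H}=G$. I must produce $X\in M$ with $W(X)=M$. The key is that $M$ can describe its own refinement of $W$ without seeing $G$: given a $\bbP$-name $\sigma\in W\subseteq M$, the set $\sigma^{G}=\sigma^{\dot G^{H}}$ is the $H$-value of a $\bbQ$-name built in $M$ from $\sigma$ and $\dot G$, and $M$ can ask, through its own forcing relation $\Vdash^{M}_{\bbQ}$, whether that value is forced into the ground $M$ of $M[H]$. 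Packaging $\bbQ$, $\dot G$, and the relevant fragment of $\Vdash^{M}_{\bbQ}$ into one set $X\in M$ is the candidate generator. The inclusion $W(X)\subseteq M$ is immediate from Definition \ref{W(X)}; the crux is $M\subseteq W(X)$, which I would establish by induction on rank, showing each $a\in M\cap V_\alpha$ is relatively constructible from a suitable $W_\beta$ together with $X$, the engine being that $V=M[H]$ pins $M$ down as precisely the $H$-free part of $W[G]$.

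In both directions the lone source of difficulty is the loss of $\AC$, which dismantles the classical intermediate-model apparatus (Boolean completions with the mixing lemma, and the bijection between intermediate $\ZFC$-models and complete subalgebras). I expect the genericity verification in (2)$\Rightarrow$(1) to be the hardest single step; the generator construction in (1)$\Rightarrow$(2) is delicate to set up but, once the coding is fixed, collapses to a routine rank induction.
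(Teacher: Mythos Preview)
The paper does not prove this statement: Theorem~\ref{4.2} is quoted verbatim as Theorem~B of Grigorieff~\cite{G} and used as a black box (in Proposition~\ref{4.55}, Corollary~\ref{5.5.5}, and the discussion preceding Theorem~\ref{millar}). There is therefore no in-paper proof to compare your attempt against.

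On the merits of your sketch: you have correctly located the architecture and the hard step. The direction (2)$\Rightarrow$(1) is indeed where the content lies, and your diagnosis that the naive Boolean quotient fails (with symmetric models as the instructive counterexample) is exactly right. But what you have written is a strategy, not a proof: the phrase ``set-sized approximations\ldots to an object that reconstitutes $\bar G$'' does not specify $\bbQ$, and the genericity verification you flag as ``the hardest single step'' is simply not carried out. Your appeal to Theorems~\ref{2.4.1} and~\ref{2.4.2} is also misplaced --- homogeneity and collapse absorption play no role in Grigorieff's argument for Theorem~B; the construction of $\bbQ$ there is explicit and combinatorial, and the genericity proof does not route through those tools.

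For (1)$\Rightarrow$(2), your idea of packaging $\bbQ$, $\dot G$, and a fragment of $\Vdash^M_\bbQ$ into a single $X$ is in the right spirit, but note that the forcing relation is a proper class, so ``relevant fragment'' must be made precise, and the rank induction you call routine needs that fragment to be rich enough to recover arbitrarily high-rank elements of $M$. A cleaner choice, closer to Grigorieff's, is to take $X=M_\alpha$ for $\alpha$ large enough that $\bbQ\in M_\alpha$; then $W(M_\alpha)$ contains $\bbQ$ and can define the full forcing relation $\Vdash^M_\bbQ$ level by level, from which $M\subseteq W(M_\alpha)$ follows. Either way, the step is not as automatic as you suggest.
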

We also use the following fact due to Solovay.

\begin{theorem}[Solovay, see  Fuchs-Hamkins-Rietz \cite{FHR}]\label{4.3}
Let $\bbP$, $\bbQ$ be  posets,
and $G \times H$ be $(V, \bbP\times \bbQ)$-generic.
Then $V[G] \cap V[H]=V$.
\end{theorem}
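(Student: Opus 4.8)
The inclusion $V \subseteq V[G] \cap V[H]$ is immediate, so the content is the reverse inclusion, which I would prove by $\in$-induction in order to reduce to subsets of the ground model. Suppose toward a contradiction that $V[G] \cap V[H] \not\subseteq V$, and choose $x \in (V[G] \cap V[H]) \setminus V$ of least rank. Since $V[G]$ and $V[H]$ are transitive and contain $V$, every element of $x$ lies in $V[G] \cap V[H]$ and has smaller rank, hence lies in $V$ by minimality; thus $x \subseteq V$, and by Replacement the ranks of its elements are bounded, so $x \subseteq V_\alpha$ for some ordinal $\alpha$ (computed in $V$, rank being absolute). It therefore suffices to show: if $x \subseteq V_\alpha$ and $x \in V[G] \cap V[H]$, then $x \in V$. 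I would stress here that, without $\AC$, one cannot first code $x$ by a set of ordinals via a well-ordering of $V_\alpha$, which is the usual $\ZFC$ shortcut; instead the argument is run directly on subsets of $V_\alpha$, and the ``nice name'' device below needs no choice.

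Next I would pass to canonical names. Fixing a $\bbP$-name $\dot u$ and a $\bbQ$-name $\dot v$ with $\dot u^G = \dot v^H = x$, I replace them by
\[
\dot x = \{(\check a, p) : a \in V_\alpha,\ p \Vdash_{\bbP} \check a \in \dot u\}, \qquad \dot y = \{(\check a, q) : a \in V_\alpha,\ q \Vdash_{\bbQ} \check a \in \dot v\},
\]
which are sets of $V$, since the forcing relation is definable in $\ZF$ and Separation applies. They satisfy $\dot x^G = \dot y^H = x$ and have all first coordinates of the form $\check a$. Viewing $\dot x$ and $\dot y$ as $\bbP\times\bbQ$-names in the natural way, their evaluations at the $(V,\bbP\times\bbQ)$-generic $G \times H$ both equal $x$, so by the forcing theorem for $\bbP\times\bbQ$ there is a condition $(p_0,q_0) \in G \times H$ with $(p_0,q_0) \Vdash \dot x = \dot y$.

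The crux is then a decoupling computation. I would define, inside $V$,
\[
z = \{a \in V_\alpha : p_0 \Vdash_{\bbP} \check a \in \dot x\},
\]
and show $z = x$. For $z \subseteq x$: if $p_0 \Vdash_{\bbP} \check a \in \dot x$ then, since $p_0 \in G$, we get $a \in \dot x^G = x$. For $x \subseteq z$: suppose $a \in x$ but $p_0 \not\Vdash_{\bbP} \check a \in \dot x$, so some $p_1 \le p_0$ forces $\check a \notin \dot x$. As $\dot x$ is a $\bbP$-name, $(p_1,q_0) \Vdash \check a \notin \dot x$, and combined with $(p_1,q_0) \le (p_0,q_0) \Vdash \dot x = \dot y$ this yields $(p_1,q_0) \Vdash \check a \notin \dot y$. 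Because $\dot y$ is a $\bbQ$-name and the statement ``$\check a \notin \dot y$'' mentions only the $\bbQ$-coordinate, the product forcing relation reduces to the factor, giving $q_0 \Vdash_{\bbQ} \check a \notin \dot y$; but $q_0 \in H$, so $a \notin \dot y^H = x$, contradicting $a \in x$. Hence $x = z \in V$, completing the induction.

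The step I expect to be the main obstacle to state cleanly is the passage between the product forcing relation and those of the two factors: namely that for a statement $\varphi$ all of whose names are $\bbP$-names (resp. $\bbQ$-names), one has $(p,q) \Vdash_{\bbP\times\bbQ} \varphi$ if and only if $p \Vdash_{\bbP} \varphi$ (resp. $q \Vdash_{\bbQ} \varphi$). I would isolate this as a preliminary sublemma, proved by the standard density argument on the irrelevant coordinate, and I would verify explicitly that it, the nice-name construction, and the definability of $\Vdash$ all go through in $\ZF$ alone, which is exactly the point that Solovay's theorem survives the removal of $\AC$.
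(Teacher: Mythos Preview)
Your proof is correct, and it is the standard Solovay argument, carefully adapted to the choiceless setting by working with subsets of $V_\alpha$ rather than first coding down to sets of ordinals. The paper, however, does not prove this theorem at all: it is simply recorded as a known fact attributed to Solovay, with a reference to Fuchs--Hamkins--Reitz \cite{FHR}, and then used as a black box (in Proposition~\ref{4.55} and in the discussion of the failure of downward directedness). So there is nothing to compare your argument against; you have supplied a proof where the paper supplies only a citation.

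One small remark on presentation: your factoring sublemma (that $(p,q)\Vdash_{\bbP\times\bbQ}\varphi$ iff $p\Vdash_\bbP\varphi$ when all names occurring are $\bbP$-names) is indeed the key technical point and is provable in $\ZF$ by the usual induction on formula complexity, using that the evaluation of a $\bbP$-name at $G\times H$ depends only on $G$. Your observation that the nice-name construction and the definability of the forcing relation require no choice is exactly what is needed to make the argument go through in the base theory of the paper.
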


\begin{lemma}[Folklore]\label{2.8+}
Let $\bbP$ be a poset, and $\alpha>\om$  a limit ordinal 
with $\bbP \in V_\alpha$.
Let $G$ be $(V, \bbP)$-generic.
For a set $Y \in V$, let $Y[G]=
\{\dot a_G \mid \dot a \in Y$ is a $\bbP$-name$\}$,
where $\dot a_G$ is the interpretation of $\dot a$ by $G$.
Then $V[G]_\alpha=V_\alpha[G]$.
\end{lemma}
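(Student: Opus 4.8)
The plan is to prove the two inclusions $V_\alpha[G]\subseteq V[G]_\alpha$ and $V[G]_\alpha\subseteq V_\alpha[G]$ separately; the first is a rank computation and the second an induction on rank that produces $\bbP$-names of low rank. For the first inclusion I would begin by checking, by $\in$-induction on $\bbP$-names, that $\mathrm{rank}(\dot a_G)\le \mathrm{rank}(\dot a)$ for every $\bbP$-name $\dot a$: since $\dot a_G=\{\sigma_G : (\sigma,p)\in\dot a,\ p\in G\}$ and each pair $(\sigma,p)\in\dot a$ satisfies $\mathrm{rank}(\sigma)+2\le \mathrm{rank}((\sigma,p))<\mathrm{rank}(\dot a)$, the inductive hypothesis gives $\mathrm{rank}(\sigma_G)+1\le \mathrm{rank}(\sigma)+1<\mathrm{rank}(\dot a)$, so $\mathrm{rank}(\dot a_G)\le\mathrm{rank}(\dot a)$. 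Hence if $\dot a\in V_\alpha$ then $\dot a_G$ is an element of $V[G]$ of rank $<\alpha$, i.e.\ $\dot a_G\in V[G]_\alpha$, which yields $V_\alpha[G]\subseteq V[G]_\alpha$.

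For the reverse inclusion I would argue by induction on $\mathrm{rank}(b)$ for $b\in V[G]_\alpha$. Given $b\in V[G]$ with $\mathrm{rank}(b)=\beta<\alpha$, the induction hypothesis supplies, for each $c\in b$, a $\bbP$-name for $c$ of rank below some fixed $\delta<\alpha$ with $\mathrm{rank}(\bbP)<\delta$ (the choice of $\delta$ is discussed in the last paragraph). Fixing a $\bbP$-name $\theta\in V$ with $\theta_G=b$, which exists because $b\in V[G]$, I would set
\[
\dot b=\{(\sigma,p) : \sigma \text{ is a } \bbP\text{-name with } \mathrm{rank}(\sigma)<\delta,\ p\in\bbP,\ p\Vdash_{\bbP}\sigma\in\theta\}.
\]
By the forcing theorem $\dot b_G=b$: any $(\sigma,p)\in\dot b$ with $p\in G$ has $\sigma_G\in\theta_G=b$, and conversely every $c\in b=\theta_G$ equals $\sigma_G$ for some such $\sigma$, and since $\sigma_G\in\theta_G$ there is $p\in G$ forcing $\sigma\in\theta$. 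Moreover $\dot b\subseteq V_\delta\times\bbP$, so $\mathrm{rank}(\dot b)\le\max(\delta,\mathrm{rank}(\bbP))+3<\alpha$ because $\alpha$ is a limit ordinal; hence $\dot b\in V_\alpha$ and $b\in V_\alpha[G]$.

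The step that requires genuine care — and the main obstacle — is providing a single $\delta<\alpha$ that bounds the ranks of the names chosen for all $c\in b$. I would secure it by strengthening the induction so as to carry such a bound: prove by induction on $\gamma<\alpha$ that there is an ordinal $h(\gamma)<\alpha$ with the property that every element of $V[G]_\gamma$ is realized by a $\bbP$-name of rank $<h(\gamma)$, where one may take $h$ nondecreasing with $h(\gamma+1)=\max(h(\gamma),\mathrm{rank}(\bbP))+3$ and $h$ continuous at limits; here the successor step is precisely the construction of $\dot b$ above, and the point is to verify that $h(\gamma)<\alpha$ for every $\gamma<\alpha$, which is where the hypotheses that $\alpha>\om$ is a limit ordinal and $\bbP\in V_\alpha$ are used. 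One then takes $\delta=h(\beta)$. The remaining ingredients — the forcing theorem and the truth lemma in $\ZF$, the fact that every element of $V[G]$ has a $\bbP$-name in $V$, and absoluteness of rank — are all standard.
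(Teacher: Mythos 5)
Your first inclusion, $V_\alpha[G]\subseteq V[G]_\alpha$ via the estimate $\mathrm{rank}(\dot a_G)\le\mathrm{rank}(\dot a)$, is exactly the paper's argument and is fine. The gap is in the reverse inclusion, at precisely the step you flag as the main obstacle and then do not carry out: the verification that $h(\gamma)<\alpha$ for every $\gamma<\alpha$. For your $h$ this verification is false. Since $h$ is nondecreasing, after the first step the recursion reads $h(\gamma+1)=h(\gamma)+3$ with $h(1)\ge\mathrm{rank}(\bbP)+3$, so $h(\om)=\sup_n h(n)\ge\mathrm{rank}(\bbP)+\om$ and, more generally, $h(\gamma)\ge\mathrm{rank}(\bbP)+\gamma$. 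Take $\alpha=\om+\om$ and $\bbP$ of rank $\om$ (Cohen forcing, say): then $h(\om)=\om+\om=\alpha$, so for $b$ of rank $\om$ (a new real) your construction only yields a name of rank $<h(\om+1)$, which exceeds $\alpha$. The hypotheses ``$\alpha>\om$ is a limit and $\bbP\in V_\alpha$'' give $\mathrm{rank}(\bbP)<\alpha$ and $\gamma<\alpha$ but certainly not $\mathrm{rank}(\bbP)+\gamma<\alpha$, so the verification cannot be completed for this $h$: your bound front-loads $\mathrm{rank}(\bbP)$ at stage $1$ and then accumulates $+3$ per successor without ever resetting at limit stages. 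The lemma is nevertheless true in this example (a new real has a name of rank about $\om+3$, built from names for the integers of finite rank), so the defect is in the bookkeeping, not in the statement.

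The paper's sketch organizes the recursion so that the excess does reset: it builds, by induction on $\beta<\alpha$ with $\bbP\in V_\beta$ (so that $\beta>\mathrm{rank}(\bbP)$ and the ranks of conditions are absorbed into $\beta$), a single name $\dot\sigma_\beta$ for the whole level $V[\dot G]_\beta$ with $\mathrm{rank}(\dot\sigma_\beta)<\beta+\om$, i.e.\ $\le\beta+n_\beta$ with $n_\beta$ finite; at a limit $\la$ one takes $\dot\sigma_\la=\bigcup_{\beta<\la}\dot\sigma_\beta$, whose rank is at most $\sup_{\beta<\la}(\beta+\om)=\la$, so the finite excess over $\beta$ returns to $0$ and the invariant survives. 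Every $b\in V[G]_\alpha$ then lies in $\dom(\dot\sigma_\beta)\subseteq V_{\beta+\om}\subseteq V_\alpha$ applied to $G$, for a suitable $\beta<\alpha$. If you want to keep your element-by-element formulation you must prove the sharper invariant (names of rank $<\beta+\om$ for elements of $V[G]_\beta$ when $\beta>\mathrm{rank}(\bbP)$), and note that the successor step then cannot be ``collect all names of rank below the previous bound'': collecting below a bound of the form $\beta+\om$ produces a name of rank $\ge\beta+\om$, so you need to pass through the level-name, whose rank is a single ordinal $\beta+n_\beta$. Finally, be aware that even the $\beta+\om$ bound tacitly uses the availability of conditions of small rank; if every $p\in\bbP$ has rank $\ge\rho_0$ then every name for a nonempty set has rank $\ge\rho_0+3$ and the argument needs $\alpha$ to absorb $\xi\mapsto\rho_0+\xi$ --- compare the stronger hypothesis $\alpha>\mathrm{rank}(\bbP)\cdot\om$ in Theorem \ref{2.4.2}.
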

\begin{proof}[Sketch of proof]
One can check that for every $\bbP$-name $\dot a$, we have $\mathrm{rank}(\dot a_G ) \le \mathrm{rank}(\dot a)$,
hence $V_\alpha[G] \subseteq V[G]_\alpha$.
For the converse, by induction on $\beta<\alpha$ with $\bbP \in V_\beta$,
we can take a $\bbP$-name $\dot \sigma$ such that
$\mathrm{rank}(\dot \sigma)<\beta+\om \le \alpha$ and 
$\Vdash_\bbP$``$\dot \sigma=V[\dot G]_\beta$''
(we do not need $\AC$).
Hence if $\bbP \in V_\alpha$
then $V[G]_\alpha \subseteq V_\alpha[G]$.
\qed\end{proof}

\section{L\"owenheim-Skolem cardinals}
In this section we shall observe some basic properties of LS cardinals,
but  results in this section are not required to prove the main theorems.

First we prove that in $\ZF$, every supercompact cardinal is an LS-cardinal.
\begin{definition}[Woodin, Definition 220 in \cite{Woodin}]\label{def. supercomact}
An uncountable cardinal $\ka$ is \emph{supercompact}
if for every $\alpha \ge \ka$,
there is $\beta  \ge \alpha$, a  transitive set $N$,
and an elementary embedding $j:V_\beta \to N$
such that the critical point of $j$ is $\ka$,
$\alpha<j(\ka)$, and ${}^{V_\alpha} N \subseteq N$.
\end{definition}
If $\ka$ is supercompact,
then $\ka$ is regular and $V_\ka$ is a model of $\ZF$.

\begin{lemma}
Every supercompact cardinal is an LS cardinal, and a limit of LS cardinals.
\end{lemma}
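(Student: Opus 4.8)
The plan is to prove the two assertions separately; the first is a direct L\"owenheim--Skolem construction from a supercompactness embedding, and the second reflects it. \emph{($\kappa$ is an LS cardinal.)} Fix $\gamma<\kappa\le\alpha$ and $x\in V_\alpha$. Replacing $\alpha$ by $\alpha+\omega$ if necessary (a submodel witnessing the defining condition for $(\gamma,\alpha+\omega,x)$ also witnesses it for $(\gamma,\alpha,x)$, since $X\cap V_\alpha\subseteq X\cap V_{\alpha+\omega}$), we may assume $\alpha>\kappa$. Put $\beta=\alpha+\omega\cdot 2$. Applying Definition~\ref{def. supercomact} at a level slightly above $\beta$ yields $\beta'>\beta$, a transitive $N$, and an elementary $j\colon V_{\beta'}\to N$ with $\crit(j)=\kappa$, $j(\kappa)>\beta$, and ${}^{V_\beta}N\subseteq N$ (hence also ${}^{V_\alpha}N\subseteq N$ and ${}^{V_\gamma}N\subseteq N$). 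The closure gives $V_\beta\in N$ with $V_\beta^N=V_\beta$, and $j\restriction V_\beta\in N$; set $Y=j[V_\beta]=\range(j\restriction V_\beta)\in N$. Since satisfaction in $V_\beta$ is definable in $V_{\beta'}$ and $j$ is elementary, $j\restriction V_\beta\colon V_\beta\to j(V_\beta)=V_{j(\beta)}^N$ is elementary, so $Y\prec V_{j(\beta)}^N$, and the transitive collapse of $Y$ is $V_\beta$ (collapsing map $(j\restriction V_\beta)^{-1}$).

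Now I verify that, inside $N$, the set $Y$ witnesses the defining condition of an LS cardinal for the parameters $\gamma$ (note $j(\gamma)=\gamma$), $j(\kappa)$, $j(\alpha)$, $j(x)$: one has $j(\beta)>j(\alpha)$; $V_\gamma=j[V_\gamma]\subseteq Y$ because $\gamma<\crit(j)$; $j(x)\in Y$ because $x\in V_\alpha\subseteq V_\beta$; $\mathrm{TC}(Y)=V_\beta\in V_{j(\kappa)}^N$ because $\beta<j(\kappa)$; and ${}^{V_\gamma}(Y\cap V_{j(\alpha)}^N)\subseteq Y$, since $Y\cap V_{j(\alpha)}^N=j[V_\alpha]$ and any $f\colon V_\gamma\to j[V_\alpha]$ equals $j(g)$ for $g=(j\restriction V_\alpha)^{-1}\circ f\in V_\beta$ (using $j\restriction V_\gamma=\mathrm{id}$). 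Hence $N$ models the defining condition with those parameters, so by elementarity of $j$, $V_{\beta'}$ models it with parameters $\gamma,\kappa,\alpha,x$; and since the statement only refers to $V_\delta$ and its satisfaction relation for $\delta\le\beta$, and to the function space ${}^{V_\gamma}(X\cap V_\alpha)$ --- all computed correctly in $V_{\beta'}$ --- it holds in $V$. Note the witnessing $\beta$ lay in $(\alpha,\alpha+\omega\cdot 2]$; write $\mathrm{LS}^+$ for this bounded strengthening of the LS property. The argument in fact shows $\kappa$ is $\mathrm{LS}^+$, and for each $\eta$ the restriction of $\mathrm{LS}^+(\lambda)$ to levels $\alpha<\eta$ is absolute between $V_\eta$ and $V$ and monotone (decreasing) in $\eta$.

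\emph{($\kappa$ is a limit of LS cardinals.)} Fix $\delta_0<\kappa$. For each large ordinal $\mu$, apply supercompactness at level $\mu$ to get $j\colon V_{\beta'}\to N$ with $\crit(j)=\kappa$, $j(\kappa)>\mu$, ${}^{V_\mu}N\subseteq N$. Since $V\models\mathrm{LS}^+(\kappa)$, and by the boundedness in $\mathrm{LS}^+$ every instance with parameters of rank $<\mu$ has a witness of rank $<\mu$ --- so that the witness lies in $N$ --- the model $N$ verifies that $\kappa$ is a cardinal above $\delta_0$ which is $\mathrm{LS}^+$ at all levels below $\mu$. Reflecting this through $j$ (the delicate point, see below) yields in $V$ a cardinal $\lambda_\mu\in(\delta_0,\kappa)$ that is $\mathrm{LS}^+$ at all levels below a corresponding ordinal. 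As $\mu$ ranges over all large ordinals and $(\delta_0,\kappa)$ is a set, Replacement produces a single $\lambda\in(\delta_0,\kappa)$ working at cofinally many levels; by the absoluteness and monotonicity noted above, $\lambda$ is then $\mathrm{LS}^+$, hence LS. Since $\delta_0<\kappa$ was arbitrary, the LS cardinals are unbounded in $\kappa$.

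The main obstacle is making this last reflection precise. Since $\crit(j)=\kappa$, the ordinal $\kappa$ is not in $\range(j)$, so ``$\kappa$ is LS'' cannot be reflected directly; one must pass through the bounded, absolute form $\mathrm{LS}^+$ and, more delicately, reconcile the closure level $\mu$ of $N$ with the range of levels at which $N$ can certify $\mathrm{LS}^+(\kappa)$ --- for the levels beyond $\mu$ one expects to use that $V_{j(\kappa)}^N$ is a model of $\ZF$ (by elementarity, since $V_\kappa\models\ZF$ for supercompact $\kappa$) together with restrictions of $j$ as embeddings available inside $N$. Getting this bookkeeping right, so that the reflected statement acquires a witness strictly below $\kappa$, is the genuinely nontrivial step; everything else is the routine checking that collapses, satisfaction relations, and function spaces are correctly computed in the relevant rank-initial segments.
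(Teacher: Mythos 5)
Your argument for the first assertion (every supercompact cardinal is LS) is correct and is essentially the standard computation: $Y=j[V_\beta]$ lies in $N$ by the closure ${}^{V_\beta}N\subseteq N$, it witnesses the LS condition in $N$ for the $j$-images of the parameters (the verification that every $f\colon V_\gamma\to j[V_\alpha]$ equals $j(g)$ for $g=(j\restriction V_\alpha)^{-1}\circ f\in V_\beta$ is the right way to get the function-space clause), and the statement pulls back through $j$ and is upward absolute from $V_{\beta'}$ to $V$ because $V_{\beta'}$ is a rank initial segment. This part differs from the paper only in that you argue directly from Definition~\ref{def. supercomact}, whereas the paper derives the whole lemma from Woodin's reflection characterization of supercompactness (Lemma 222 in \cite{Woodin}).

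The second assertion is where your proof has a genuine gap, and you have in fact flagged it yourself. To pull ``there is $\la\in(\delta_0,\ka)$ which is $\mathrm{LS}^+$ at all levels $<\eta$'' back through $j\colon V_{\beta'}\to N$, you need $N$ to satisfy ``$\ka$ is $\mathrm{LS}^+$ at all levels $<j(\eta)$'' (or at all levels below the height of $N$), with $j(\eta)$ in the range of $j$. But for $\eta\ge\ka$ you have $j(\eta)\ge j(\ka)>\mu$, while the closure ${}^{V_\mu}N\subseteq N$ only guarantees $V_\alpha^N=V_\alpha$ and the transfer of witnesses for levels $\alpha<\mu$; at levels in $[\mu,j(\eta))$ the sets $V_\alpha^N$ are no longer true rank initial segments of $V$, and the LS property of $\ka$ in $V$ does not hand you witnesses inside $N$ there. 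The suggestion to use ``$V_{j(\ka)}^N\models\ZF$ together with restrictions of $j$'' does not close this; producing an $X\prec V_\alpha^N$ lying \emph{in} $N$ with small transitive collapse at such levels is exactly the kind of approximation-property argument the paper develops separately (Lemmas~\ref{2.7} and~\ref{2.13}) for generic extensions, and it is not routine for the target model $N$ of a supercompactness embedding. The paper sidesteps all of this by invoking Woodin's characterization: it supplies an elementary $j\colon V_{\overline{\gamma}+1}\to V_{\gamma+1}$ between genuine rank initial segments of $V$ with $j(\overline{\ka})=\ka$ and with $V_{\overline{\gamma}}\prec_{\Sigma_1^*}V$, and it is precisely the $\Sigma_1^*$-elementarity of $V_{\overline{\gamma}}$ in $V$ (whose defining clause about sets $b$ with ${}^{V_\alpha}b\subseteq b$ is tailored to the function-space condition) that lets one conclude that $\overline{\ka}<\ka$, which is LS from the point of view of $V_{\overline{\gamma}}$, is genuinely LS in $V$. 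Your proof as written establishes that $\ka$ is LS but not that it is a limit of LS cardinals; to complete it along your lines you would either need to prove the bridge from level $\mu$ to the height of $N$, or replace the reflection step by an appeal to a characterization of the Woodin type.
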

This lemma is an immediate consequence of the following result of Woodin.
For an ordinal $\gamma$, $V_\gamma \prec_{\Sigma_1^*} V$ means that 
$V_\gamma \prec_{\Sigma_1} V$ and 
for all $\alpha<\gamma$, $a \in V_\gamma$, and for all $\Sigma_0$-formula $\varphi(x,y)$,
if there is $b \in V$ such that $\varphi(a,b)$ holds and ${}^{V_\alpha} b \subseteq b$ then
there is $b \in V_\gamma$ such that $\varphi(a,b)$ holds and ${}^{V_\alpha} b \subseteq b$.

\begin{theorem}[Woodin, Lemma 222 in \cite{Woodin}]
For an uncountable cardinal $\ka$, the following are equivalent:
\begin{enumerate}
\item $\ka$ is supercompact.
\item For all $\gamma>\ka$ such that $V_\gamma \prec_{\Sigma_1^*} V$, for all $a \in V_\gamma$,
there exists $\overline{\gamma}<\ka$, $\overline{a} \in V_{\overline{\gamma}}$,
and an elementary embedding $j:V_{\overline{\gamma}+1} \to V_{\gamma+1}$
with critical point $\overline{\ka}<\ka$ such that
$j(\overline{\ka})=\ka$, $j(\overline{a})=a$, and such that $V_{\overline{\gamma}} \prec_{\Sigma_1^*} V$.
\end{enumerate}
\end{theorem}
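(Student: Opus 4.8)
The strategy is to prove the two implications separately. In both directions the clause ``$V_\delta \prec_{\Sigma_1^*} V$'' plays the role that $\AC$ plays in the usual treatment: it is exactly the amount of correctness needed so that the assertion ``$\ka$ is $\alpha$-supercompact'' — an existential statement whose witness (a target model $N$) is required to be $V_\alpha$-closed — reflects between $V$ and $V_\delta$; this is precisely the class of statements the $\Sigma_1^*$ clause is tailored to.

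$(1)\Rightarrow(2)$. Assume $\ka$ is supercompact and fix $\ga>\ka$ with $V_\ga\prec_{\Sigma_1^*}V$ and $a\in V_\ga$. Apply the definition of supercompactness at $\alpha=\ga+2$: we get $\beta\ge\ga+2$, a transitive $N$ with $V_{\ga+2}\subseteq N$ and ${}^{V_{\ga+2}}N\subseteq N$, and an elementary $j:V_\beta\to N$ with $\crit(j)=\ka$ and $j(\ka)>\ga+2$. Since $N$ computes $V_{\ga+1},V_{\ga+2}$ correctly and is closed under $V_{\ga+1}$-sequences, one checks (i) $j\restriction V_{\ga+1}\in N$, and (ii) $j\restriction V_{\ga+1}:V_{\ga+1}\to (V_{j(\ga)+1})^N$ is fully elementary — because satisfaction in the set-sized structure $V_{\ga+1}$ is absolute to $V_\beta$, $j$ is elementary, and $j$ sends the set $V_{\ga+1}$ (definable from $\ga$, with top ordinal $\ga$) to $(V_{j(\ga)+1})^N$ (with top ordinal $j(\ga)$). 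Hence \emph{inside $N$} the tuple $(\ga,a,\ka,j\restriction V_{\ga+1})$ witnesses the statement obtained from clause (2) by replacing $\ka,\ga,a$ by $j(\ka),j(\ga),j(a)$: indeed $\ga<j(\ka)$, $a\in V_\ga$, the critical point is $\ka<\ga$, $(j\restriction V_{\ga+1})(\ka)=j(\ka)$, $(j\restriction V_{\ga+1})(a)=j(a)$, and $N\models V_\ga\prec_{\Sigma_1^*}V$ (using that $N$ is correct up to and including level $\ga$, which follows from $V_\ga\prec_{\Sigma_1^*}V$ together with the closure of $N$). Pulling this back along $j$ we obtain in $V$ an ordinal $\overline\ga<\ka$, $\overline a\in V_{\overline\ga}$, $\overline\ka<\ka$ and an elementary $k:V_{\overline\ga+1}\to V_{\ga+1}$ with $\crit(k)=\overline\ka$, $k(\overline\ka)=\ka$, $k(\overline a)=a$ and $V_{\overline\ga}\prec_{\Sigma_1^*}V$ (the faithful preservation of this last clause through the pull-back is a delicate point, handled by working with auxiliary $\Sigma_1^*$-correct ordinals above $\ga$), i.e.\ clause (2). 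The target of $k$ is genuinely $V_{\ga+1}$ because the $j$-preimage of ``$V_{j(\ga)+1}$'' is ``$V_{\ga+1}$''.

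$(2)\Rightarrow(1)$. Fix $\alpha\ge\ka$; we must produce $\beta$, a transitive $N$ and an elementary $j:V_\beta\to N$ with $\crit(j)=\ka$, $\alpha<j(\ka)$ and ${}^{V_\alpha}N\subseteq N$. By $\ZF$-reflection fix a limit ordinal $\ga\gg\alpha$ with $V_\ga\prec_{\Sigma_1^*}V$, and apply clause (2) with parameter $a=\langle\alpha,V_\alpha\rangle\in V_\ga$: we get $\overline\ga<\ka$, $\overline a\in V_{\overline\ga}$, and an elementary $j_1:V_{\overline\ga+1}\to V_{\ga+1}$ with $\crit(j_1)=\overline\ka<\ka$, $j_1(\overline\ka)=\ka$, $j_1(\overline a)=a$ and $V_{\overline\ga}\prec_{\Sigma_1^*}V$. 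Then $\overline a=\langle\overline\alpha,V_{\overline\alpha}\rangle$ with $j_1(\overline\alpha)=\alpha$, $\overline\ka\le\overline\alpha<\overline\ga<\ka$, and automatically $j_1(\overline\ga)=\ga$ and $j_1(V_{\overline\ga})=V_\ga$. Now run Magidor's argument for supercompactness, carried out in $\ZF$: from $j_1$ one extracts, \emph{inside $V_{\overline\ga+1}$}, data witnessing that $\overline\ka$ is ``$\overline\alpha$-supercompact'' — a $V_{\overline\alpha}$-closed elementary embedding with critical point $\overline\ka$ sending $\overline\ka$ above $\overline\alpha$ — obtained as the transitive collapse of the ultrapower by the derived extender of the appropriate length read off from $j_1$; the crucial $\ZF$ point is that this ultrapower is automatically well-founded, since it factors into $V_\ga$ through $j_1$, so no appeal to $\AC$ (such as would be needed for a general ultrapower) is required. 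Transferring this witness upward along $j_1$ gives ``$V_\ga\models\ka$ is $\alpha$-supercompact''; and since $\ga>\alpha$ and $V_\ga\prec_{\Sigma_1^*}V$, this reflects to $V$, yielding the desired $\beta,N,j$. As $\alpha$ was arbitrary, $\ka$ is supercompact.

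I expect $(2)\Rightarrow(1)$ to be the main obstacle, and within it the $\ZF$ bookkeeping around closure: one has to make sure the condition ``${}^{V_\alpha}N\subseteq N$'' is preserved faithfully while (a) extracting the derived embedding inside $V_{\overline\ga+1}$, (b) pushing it up through $j_1$, and (c) reflecting it into $V$ via $V_\ga\prec_{\Sigma_1^*}V$. This is the reason the notion built into the statement is $\Sigma_1^*$-elementarity rather than plain $\Sigma_1$-elementarity (which would do under $\AC$, where one has genuine normal fine ultrafilters available in place of extenders), and the reason both $V_{\overline\ga+1}$ and $V_{\ga+1}$ carry the extra ``$+1$'' — so that under the embeddings the top ordinals, hence the power sets at that level, are matched. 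A more routine point, in the $(1)\Rightarrow(2)$ direction, is to check that the image model $N$ really satisfies ``$V_\ga\prec_{\Sigma_1^*}V$'' and that this clause survives the pull-back.
\qed
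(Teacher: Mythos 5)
First, note that the paper does not prove this statement at all: it is quoted verbatim from Woodin (Lemma 222 of \emph{Suitable extender models I}) and used as a black box to deduce that supercompact cardinals are LS cardinals. So the comparison here is with Woodin's proof, not with anything in this paper.

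Your $(1)\Rightarrow(2)$ direction has the right shape (apply supercompactness above $\ga+1$, observe $j\restriction V_{\ga+1}\in N$ by closure, and pull the resulting witness back through $j$), and you correctly flag the one genuinely delicate point, namely that ``$V_{\overline\ga}\prec_{\Sigma_1^*}V$'' is a statement about $V$, not about $N$ or $V_\beta$, so its preservation under the pull-back needs a separate argument about the definable club of $\Sigma_1^*$-correct ordinals; you gesture at this but do not carry it out. The real problem is $(2)\Rightarrow(1)$, where you take a wrong turn. Invoking ``the ultrapower by the derived extender'' is both unnecessary and broken in $\ZF$: well-foundedness is not the issue (your factoring remark handles that), but \L o\'s's theorem for such ultrapowers requires choosing witnesses and is exactly the kind of step that fails without $\AC$; this is why Woodin's Definition 220 avoids measures and extenders entirely. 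Worse, your argument never actually uses the hypothesis $V_{\overline\ga}\prec_{\Sigma_1^*}V$ that clause (2) provides, and that hypothesis is precisely what replaces the ultrapower. The correct route is: the embedding $j_1:V_{\overline\ga+1}\to V_{\ga+1}$ is \emph{itself} a witness in $V$ that $\overline\ka$ is $\overline\alpha$-supercompact, since its target $V_{\ga+1}$ is trivially closed under $V_{\overline\alpha}$-sequences ($\overline\alpha<\overline\ga<\ka$); because this existential statement asks for a $V_{\overline\alpha}$-closed witness, it is exactly of the form that $\Sigma_1^*$-correctness reflects, so $V_{\overline\ga}\prec_{\Sigma_1^*}V$ yields a witness \emph{inside} $V_{\overline\ga}$; applying $j_1$ (which sends $V_{\overline\ga}$ to $V_\ga$, $\overline\ka$ to $\ka$, $\overline\alpha$ to $\alpha$) transfers this to $V_\ga\models$``$\ka$ is $\alpha$-supercompact''; and a witness lying in $V_\ga$ is a genuine witness in $V$ by absoluteness of elementarity and closure for set structures. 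No extender, no ultrapower, and the $\Sigma_1^*$ clause is used where it must be. As written, your $(2)\Rightarrow(1)$ does not go through in $\ZF$.
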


In $\ZFC$, 
the existence of proper class many LS cardinals is provable,
and LS cardinal is not a large cardinal.
However we will see that the existence of an LS cardinal is 
not provable from $\ZF$.

\begin{definition}
An uncountable cardinal $\ka$ is  \emph{weakly L\"owenheim-Skolem} (weakly LS, for short)
if for every $\gamma <\ka \le \alpha$ and $x \in V_\alpha$,
there is $X \prec V_\alpha$ such that:
\begin{enumerate}
\item $V_\gamma \subseteq X$.
\item $x \in X$.
\item The transitive collapse of $X$ belongs to $V_\ka$.
\end{enumerate}
\end{definition}
Clearly every LS cardinal is weakly LS.

\begin{lemma}\label{1.4}
Let $\ka$ be a weakly LS cardinal.
\begin{enumerate}
\item For every $x \in V_\ka$, there is no surjection from $x$ onto $\ka$.
\item For every cardinal $\la \ge \ka$ and $x \in V_\ka$,
there is no cofinal map from $x$ into $\la^+$.
In particular $\cf(\la^+) \ge \ka$.
\end{enumerate}
\end{lemma}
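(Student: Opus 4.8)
Suppose toward a contradiction that $f\colon x\to\ka$ is a surjection with $x\in V_\ka$. I would fix $\gamma<\ka$ with $\mathrm{trcl}(\{x\})\subseteq V_\gamma$ and a limit $\alpha>\ka$ with $f\in V_\alpha$, and apply the weak LS property at $\gamma,\alpha$ with parameter $f$: this gives $X\prec V_\alpha$ with $V_\gamma\subseteq X$, $f\in X$, and transitive collapse $\bar X\in V_\ka$; let $\pi\colon X\to\bar X$ be the collapsing map. Since $\mathrm{trcl}(\{x\})\subseteq V_\gamma\subseteq X$ we get $x\subseteq X$, hence $\range(f)=\ka\in X$, and every $\xi<\ka$ is $f(a)$ for some $a\in x\subseteq X$, so $\xi\in X$; thus $\ka+1\subseteq X$. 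Then $\pi$ is the identity on $\ka+1$, so $\ka=\pi(\ka)\in\bar X\in V_\ka$, which is absurd because $\ka$ has rank $\ka$ while $\bar X$ has rank $<\ka$.

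\textbf{Part (2).} I would first reduce to proving that $\cf(\la^+)\ge\ka$ for every cardinal $\la\ge\ka$. If $g\colon x\to\la^+$ is cofinal with $x\in V_\ka$, then $\range(g)$ is a set of ordinals cofinal in $\la^+$, and $a\mapsto$ (the $\in$-rank of $g(a)$ in $\range(g)$) is a surjection of $x$ onto $\ot(\range(g))$; by (1) that ordinal is $<\ka$, so $\cf(\la^+)\le\ot(\range(g))<\ka$. Conversely $\cf(\la^+)=\rho<\ka$ gives a cofinal map $\rho\to\la^+$ with $\rho\in V_\ka$. I would also record, using (1), that $\ka$ is regular and that $\ka$ is a limit cardinal: if $\ka=\nu^+$ then $\p(\nu)\in V_\ka$ surjects onto $\nu^+=\ka$ (map a subset of $\nu$, read as a binary relation on $\nu$ via a fixed pairing, to its order type when it is a well-order, else to $0$), contradicting (1); hence $\nu<\ka$ implies $\nu^+<\ka$.

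Now assume for contradiction that $\cf(\la^+)<\ka$ for some cardinal $\la\ge\ka$, and take the least such $\la$. Put $\rho=\cf(\la^+)<\ka$, fix an increasing continuous cofinal $c\colon\rho\to\la^+$, and apply weak LS at some $\gamma\in(\rho,\ka)$ and a limit $\alpha>\la^+$ with parameter $\seq{c,\la}$, obtaining $X\prec V_\alpha$ with $V_\gamma\subseteq X$, $c,\la\in X$, collapse $\bar X\in V_\ka$, $\pi\colon X\to\bar X$. Then $\range(c)\subseteq X$ is cofinal in $\la^+$, so $X\cap\la^+$ is cofinal in $\la^+$ and $\ot(X\cap\la)=\pi(\la)<\ka$ (if $\la\subseteq X$ then $\la=\pi(\la)\in\bar X\in V_\ka$, absurd). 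When $\la=\la_0^+$ is a successor cardinal, $\la_0\ge\ka$ (else $\la<\ka$), so minimality gives $\cf(\la)\ge\ka$ and hence $X\cap\la$ is bounded in $\la$; combining this with the fact that every $\delta\in X$ with $\la\le\delta<\la^+$ has $|\delta|=\la$ (so $X$ contains a bijection $\la\to\delta$, whence $X\cap\delta$ is equinumerous with $X\cap\la$ and $\ot(X\cap\delta)<\ka$), the plan is to convert the cofinal covering $X\cap\la^+=\bigcup_{\xi<\rho}(X\cap c(\xi))$ — together with the regularity of $\ka$, which bounds $\sup_{\xi}\ot(X\cap c(\xi))$ below $\ka$ — into a surjection of a set in $V_\ka$ onto $\ka$, contradicting (1). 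The limit-cardinal case, in particular the base case $\la=\ka$, should succumb to the same bookkeeping with regularity and limit-cardinality of $\ka$ replacing the inductive hypothesis.

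\textbf{Main obstacle.} A single transitive-collapse step only \emph{reflects} the hypothesis: since $\bar X\in V_\ka$, every ordinal $\bar X$ computes (cofinalities, order types of the sets $X\cap\delta$) automatically lands below $\ka$, so one merely re-derives ``some successor cardinal has cofinality $<\ka$'' one level down rather than reaching a contradiction. The real work of (2) is to squeeze out of the elementary submodel an honest surjection of a $V_\ka$-set onto $\ka$ (so that (1) bites), and arranging the heights, the parameter, the value of $\gamma$, the minimal choice of $\la$ and the $\ZF$-combinatorics so that this extra leverage appears — instead of the argument sliding downward indefinitely — is the step I expect to be delicate.
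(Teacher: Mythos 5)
Part (1) is correct and is essentially the paper's own argument: since $x\subseteq X$, the surjection forces $\ka\subseteq X$, so the transitive collapse cannot lie in $V_\ka$.

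Part (2) has a genuine gap, which you yourself flag in your ``main obstacle'' paragraph. Every quantity you propose to extract from the collapse (the order types $\ot(X\cap c(\xi))$, $\ot(X\cap\la)$, reflected cofinalities) lands below $\ka$ automatically because $\bar X\in V_\ka$, so slicing $X\cap\la^+$ along the cofinal sequence $c$ only reproduces the hypothesis one level down and never produces the surjection onto $\ka$ that would let (1) bite. The minimality-of-$\la$ induction and the successor/limit case split do not repair this. The paper closes the argument with a different slicing. Since $X\cap\la^+$ is cofinal in $\la^+$ and, by elementarity, each $\delta\in X\cap[\la,\la^+)$ has a surjection $\la\to\delta$ lying in $X$, one obtains a surjection $g:Y\times\la\to\la^+$ from the collapsed set $Y\in V_\ka$, namely $g(a,\beta)=\pi^{-1}(a)(\beta)$ when $\pi^{-1}(a)$ is such a surjection (and $0$ otherwise). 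Now slice by the \emph{second} coordinate: $\la^+=\bigcup_{\beta<\la}R_\beta$ where $R_\beta=g``(Y\times\{\beta\})$ is a surjective image of the single set $Y\in V_\ka$. If $\ot(R_\beta)<\ka$ for every $\beta<\la$, then $\la^+$ is a surjective image of $\la\times\ka$, and $\size{\la\times\ka}=\la$ is provable in $\ZF$ for well-ordered cardinals, a contradiction; hence some $R_\beta$ has order type $\ge\ka$, giving a surjection from $Y$ onto $\ka$ and contradicting (1). The point is that each horizontal slice is the image of one fixed member of $V_\ka$ (so (1) applies to it) while there are only $\la$-many slices (so they cannot all be short); your vertical slices along $c$ have neither property in a usable combination, and no induction on $\la$ is needed.
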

\begin{proof}
(1). Suppose not.
Then there is $\gamma<\ka$ and a surjection $f$ from $V_\gamma$ onto $\ka$.
Take a large $\alpha \ge \ka$ and
$X \prec V_\alpha$ such that
$V_\gamma \subseteq X$, $\gamma, f \in X$, and the transitive collapse of $X$ is in $V_\ka$.
Clearly $\size{X \cap \ka}<\ka$, otherwise the transitive collapse of $X$
cannot be in $V_\ka$.
However, since $V_\gamma \subseteq X$ and $f \in X$,
we have $\ka=f``V_\gamma \subseteq X$,
this is a contradiction.

(2). Suppose to the contrary that 
there is a set $x \in V_\ka$ and a cofinal map $f$
from $x$ into $\la^+$. Fix $\gamma<\ka$ with $x \in V_\gamma$.
Take a large $\alpha>\la^+$ and $X \prec V_\alpha$ such that:
\begin{enumerate}
\item $V_\gamma \subseteq X$.
\item The transitive collapse of $X$ is in $V_\ka$.
\item $X$ contains all relevant objects.
\end{enumerate}
Note that $x \subseteq X$,
hence $f``x \subseteq X$ and $\la^+=\sup(f``x)=\sup(X \cap \la^+)$.

Let $Y$ be the transitive collapse of $X$,
and $\pi:X \to Y$ be the collapsing map.
Now define $g: Y \times \la \to \la^+$ as follows:
For $\seq{a, \beta } \in Y \times \la$,
if $\pi^{-1}(a)$ is a sujerction from $\la$ onto some ordinal $<\la^+$,
then $g(a, \beta)=\pi^{-1}(a)(\beta)$,
and $g(a, \beta)=0$ otherwise.
Since $\sup(X \cap \la^+)=\la^+$,
$g$ is a sujerction from $Y \times \la$ onto $\la^+$.
For $\beta<\la$, let $R_\beta=\{g(a, \beta) \mid a \in Y\} \subseteq \la^+$.
We know $\la^+=\bigcup_{\beta<\la} R_\beta$.
If $\ot(R_\beta) <\ka$ for every $\beta<\la$,
we can take a canonical surjection from $\la \times \ka$ onto $\la^+$.
However we can prove $\size{\la \times \ka}=\la$ in $\ZF$, hence $\size{\la^+}=\size{\la \times \ka}=\la$, so this is impossible.
Thus there is $\beta<\la$ with
$\ot(R_\beta) \ge \ka$.
This means that there is a surjection from $Y$ onto $\ka$ via $R_\beta$,
contradicting (1).
\qed\end{proof}

Consider the model of $\ZF$ constructed by Gitik \cite{Gitik}, which has no regular uncountable cardinals.
By Lemma \ref{1.4}, there are no (weakly) LS cardinals in this model.

%

\begin{corollary}
An uncountable cardinal $\ka$ is an LS-cardinal if and only if
for every set $x$ and $\gamma<\ka$,
there is $\alpha \ge \ka$ and $X \prec V_\alpha$ such that
$x \in X$, $V_\gamma \subseteq X$, ${}^{V_\gamma} X \subseteq X$,
and the transitive collapse of $X$ belongs to $V_\kappa$.
\end{corollary}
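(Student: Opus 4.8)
My plan is to reduce the corollary to the definition of an LS cardinal by a repackaging of parameters; the only genuine content is upgrading the closure clause ${}^{V_\gamma}(X\cap V_\alpha)\subseteq X$ of the definition to the stronger ${}^{V_\gamma}X\subseteq X$ demanded here, and I would obtain this by cutting a witness for the definition down to a carefully chosen rank initial segment. The implication from the displayed condition to ``$\kappa$ is an LS cardinal'' is routine: given $\gamma<\kappa\le\alpha_0$ and $z\in V_{\alpha_0}$, apply the condition to the pair $\langle z,\alpha_0\rangle$ and the same $\gamma$, obtaining $\alpha\ge\kappa$ and $X\prec V_\alpha$ with $\langle z,\alpha_0\rangle\in X$, $V_\gamma\subseteq X$, ${}^{V_\gamma}X\subseteq X$, and transitive collapse in $V_\kappa$; then $z,\alpha_0\in X$, and $\alpha_0\in X\subseteq V_\alpha$ forces $\alpha_0<\alpha$, so $\beta:=\alpha$ serves as the ambient level in the definition, clauses (1)--(3) are immediate, and ${}^{V_\gamma}(X\cap V_{\alpha_0})\subseteq{}^{V_\gamma}X\subseteq X$ yields clause (4).

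For the converse, assume $\kappa$ is an LS cardinal and fix a set $y$ and $\gamma<\kappa$. First choose a \emph{regular} cardinal $\theta$ with $\theta\ge\kappa$ and $\theta>\mathrm{rank}(y)$ (for instance the Hartogs number of $\max(\kappa,\mathrm{rank}(y))$). Apply the LS property with this same $\gamma$, with $\alpha:=\theta+\omega$, and with the parameter $\langle y,\theta\rangle\in V_\alpha$, to obtain $\beta>\theta+\omega$ and $X\prec V_\beta$ such that $V_\gamma\subseteq X$, $y,\theta\in X$, ${}^{V_\gamma}(X\cap V_{\theta+\omega})\subseteq X$, and the transitive collapse $\pi\colon X\to\bar X$ satisfies $\bar X\in V_\kappa$. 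Then set $X':=X\cap V_\theta$, and propose $\alpha:=\theta$ and $X:=X'$ as the witnesses required by the corollary.

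It remains to verify the four clauses for $X'\prec V_\theta$. Elementarity $X'\prec V_\theta$ is the standard Tarski--Vaught fact, using $V_\theta\in X$. Since $X'$ is $\in$-downward closed inside $X$, the map $\pi\restriction X'$ is its Mostowski collapse, and by absoluteness of rank it equals $\bar X\cap V_{\pi(\theta)}$, which lies in $V_\kappa$ because $\pi(\theta)<\kappa$. Clearly $V_\gamma\subseteq X'$ (as $\gamma<\kappa\le\theta$) and $y\in X'$ (as $\mathrm{rank}(y)<\theta$). For the crucial clause ${}^{V_\gamma}X'\subseteq X'$, let $f\colon V_\gamma\to X'$ be in $V$: for each $a$ we have $\mathrm{rank}(f(a))\in X\cap\theta$, and since $\pi$ maps $X\cap\theta$ order-isomorphically into $\bar X\in V_\kappa$ this set has order type below $\kappa\le\theta$, so being a subset of the regular ordinal $\theta$ it is bounded in $\theta$; hence $\sup_{a\in V_\gamma}\mathrm{rank}(f(a))<\theta$, so $\mathrm{rank}(f)<\theta$ and $f\in V_\theta$. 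As $f\colon V_\gamma\to X\cap V_{\theta+\omega}$ as well, clause (4) of the definition gives $f\in X$, whence $f\in X\cap V_\theta=X'$.

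The step I expect to require the most care is exactly this last closure argument. A single invocation of the LS property closes $X$ only under those $V_\gamma$-sequences whose ranges lie in $X\cap V_{\theta+\omega}$, and such a sequence may well have rank $\ge\theta$, so neither $X$ itself nor an arbitrary rank cut of it need be closed under all $V_\gamma$-sequences of its members; in $\ZFC$ one would simply iterate the L\"owenheim--Skolem construction $\omega$ (or more) times, but without choice the transitive collapse of such an iterate can escape $V_\kappa$, notably when $\kappa$ is singular of cofinality $\omega$. Cutting at a \emph{regular} $\theta$ chosen above $\mathrm{rank}(y)$ is precisely what makes the three demands fit together in one step: it keeps $X'$ elementary in $V_\theta$, keeps the collapse inside $V_\kappa$, and forces the ranks $\mathrm{rank}(f(a))$ to stay bounded below $\theta$ — so no transfinite iteration of elementary submodels is needed.
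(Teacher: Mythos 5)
Your overall strategy is the same as the paper's: apply the LS property at a level above the rank of the given set, cut the resulting model down to $X':=X\cap V_\theta$, and obtain closure under $V_\gamma$-sequences by showing that any $f\colon V_\gamma\to X'$ has rank below $\theta$ and then invoking clause (4) of the definition. That part of your argument is correct. The one genuine gap is the very first step: you ``choose a \emph{regular} cardinal $\theta\ge\kappa$ with $\theta>\mathrm{rank}(y)$, for instance the Hartogs number of $\max(\kappa,\mathrm{rank}(y))$.'' In $\ZF$ the Hartogs number of an ordinal $\mu$ is the successor cardinal $\mu^+$, and successor cardinals need not be regular without choice; worse, arbitrarily large regular cardinals need not exist at all (the paper itself cites Gitik's model, in which every uncountable cardinal is singular). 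So the object you name is not known to exist, and your later step ``being a subset of the regular ordinal $\theta$ it is bounded in $\theta$'' is exactly where this unjustified regularity is consumed.

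The gap is local and fixable, and the fix is what the paper actually does. By Lemma \ref{1.4}(2), since $\kappa$ is (weakly) LS, for every cardinal $\lambda\ge\kappa$ there is no cofinal map from any element of $V_\kappa$ into $\lambda^+$; in particular no subset of $\lambda^+$ of order type $<\kappa$ can be unbounded in $\lambda^+$ (an unbounded such subset would give a cofinal map from an ordinal $<\kappa$). Taking $\theta=\lambda^+$ for a cardinal $\lambda\ge\kappa$ with $\mathrm{rank}(y)<\lambda^+$ --- which is your Hartogs number, just stripped of the regularity claim --- the set $X\cap\theta$ has order type $<\kappa$ because its image under the collapse $\pi$ sits inside $\bar X\in V_\kappa$, hence it is bounded in $\theta$ by Lemma \ref{1.4} rather than by regularity, and the remainder of your closure argument goes through verbatim. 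With that substitution your proof coincides with the paper's (the paper phrases the choice of level as ``an $\alpha$ into which no element of $V_\kappa$ maps cofinally'' and cuts at $V_\alpha$ itself rather than at $V_{\alpha}$ inside $V_{\alpha+\omega}$, but this is only a cosmetic difference).
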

\begin{proof}
The ``if'' part is clear.
For the converse, take a set $x$ and $\gamma <\ka$.
By Lemma \ref{1.4},
we can find $\alpha>\ka$ such that $x \in V_\alpha$, but there is no $y \in V_\ka$ for which
there is a cofinal map $f:y \to \alpha$.
Since $\ka$ is LS, we can find $\beta>\alpha$ and $X' \prec V_\beta$
such that $x, \alpha \in X'$, $V_\gamma \subseteq X'$,
${}^{V_\gamma} (X' \cap V_\alpha) \subseteq X'$, and
the transitive collapse of $X'$ is in $V_\kappa$.
Let $X=X' \cap V_\alpha$.
We have that $X \prec V_\alpha$, $x \in X$,
and its transitive collapse belongs to $V_\kappa$.
Next take $f :V_\gamma \to X$. We know $f \in X'$.
By the choice of $\alpha$,
the set $\{\mathrm{rank}(f(z)) \mid z \in V_\gamma\}$ is bounded in $\alpha$.
Hence $f \in V_\alpha$, and $f \in X' \cap V_\alpha=X$.
\qed\end{proof}

Next we prove that if $\ka$ is weakly LS, then the club filter over $\la^+$ for $\la \ge \ka$
is $\ka$-complete.
\begin{lemma}\label{1.5}
Let $\ka$ be a weakly LS cardinal.
Let $\la \ge \ka$ be a cardinal and $x \in V_\ka$.
Let $f$ be a function from $x$ into
the club filter over $\la^+$.
Then $\bigcap f``x$ contains a club in $\la^+$.
In particular, the club filter over $\la^+$ is $\ka$-complete.
\end{lemma}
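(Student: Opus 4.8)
The plan is to imitate the classical elementary-submodel proof that an intersection of fewer than $\cf(\la^+)$ many clubs in $\la^+$ is again a club, making two substitutions. The downward L\"owenheim-Skolem theorem, unavailable in $\ZF$, will be replaced by the weak LS property of $\ka$; and the step of the classical argument where one picks, for each $a \in x$, a club contained in $f(a)$---an instance of $\AC$ over $x$---will be bypassed by working inside a single elementary submodel, where for each individual $a$ one needs only that the model contains \emph{some} club below $f(a)$.

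First I would fix $\gamma < \ka$ with $x \in V_\gamma$ and a limit ordinal $\alpha > \la^+$ large enough that $\la^+, x, f$ and the club filter $\calF$ on $\la^+$ all lie in $V_\alpha$; call $X \prec V_\alpha$ \emph{suitable} if $V_\gamma \cup \{\la^+, x, f, \calF\} \subseteq X$ and the transitive collapse of $X$ belongs to $V_\ka$, so that in particular $\ot(X \cap \la^+) < \ka$. Set $D = \{\sup(X \cap \la^+) : X \text{ suitable}\}$. I would then verify two points. (i) $D \subseteq \bigcap f``x$: if $\delta = \sup(X \cap \la^+)$ for a suitable $X$ and $a \in x$, then $a \in x \subseteq V_\gamma \subseteq X$ and $f \in X$, hence $f(a) \in X$; since $V_\alpha$ satisfies ``$f(a)$ contains a club'', $X$ contains some club $C \subseteq f(a)$; elementarity gives that $X \cap C$ is cofinal in $\delta$, and since $C$ is closed we get $\delta = \sup(X \cap C) \in C \subseteq f(a)$. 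Crucially, this treats each $a$ separately and makes no choice of clubs along $x$. (ii) $D$ is unbounded in $\la^+$: given $\beta < \la^+$, apply the weak LS property to $\seq{\beta, \la^+, x, f, \calF} \in V_\alpha$ to obtain a suitable $X$ containing all of these objects; then $\ot(X \cap \la^+) < \ka \le \cf(\la^+)$ by Lemma \ref{1.4}(2), so $\delta := \sup(X \cap \la^+) < \la^+$, while $\delta > \beta$ since $\beta + 1 \in X$.

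The hard part---and the only place where something beyond the $\ZFC$ argument is needed---is to pass from ``$D$ is an unbounded subset of $\bigcap f``x$'' to ``$\bigcap f``x$ contains a \emph{club}''. The obstruction is exactly that each $f(a)$ is only known to contain a club, not to be closed, so a limit point $\delta$ of $D$ need not belong to $f(a)$ merely because cofinally many points of $f(a)$ (indeed of $D$) lie below it, and there is in general no canonical way to extract from the set $f(a)$ a club contained in it. The route I would try is to observe that step (i) used only that $X \prec V_\alpha$, $V_\gamma \cup \{\la^+, x, f\} \subseteq X$ and $\sup(X \cap \la^+) = \delta$---the smallness of the collapse was irrelevant there---and then, for each limit point $\delta$ of $D$, to manufacture an elementary $Y \prec V_\alpha$ with $V_\gamma \cup \{\la^+, x, f\} \subseteq Y$ and $\sup(Y \cap \la^+) = \delta$, as the union of a sufficiently coherent family of suitable submodels whose suprema approach $\delta$. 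This would give $\delta \in f(a)$ for every $a \in x$, whence $\bar D := D \cup \{\delta < \la^+ : D \cap \delta \text{ is cofinal in } \delta\}$ would be a club contained in $\bigcap f``x$. The delicate point is to carry this out within $\ZF$: one must iterate the weak-LS hull operation along a well-ordered index in a definable way so as to avoid any appeal to choice, check that the union of the resulting elementary chain is again elementary in $V_\alpha$ with the prescribed supremum, and accommodate the limit stages---in particular the case where $\delta$ has cofinality $\ge \ka$, where the limiting model is inevitably non-suitable. Once $\bar D$ is known to be a club inside $\bigcap f``x$, the lemma follows, and the final assertion is the special case in which $x$ enumerates the given family of sets from the club filter.
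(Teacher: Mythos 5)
Your steps (i) and (ii) are correct and match the paper's reasoning: a single suitable $X$ catches, for each $a\in x$ separately, \emph{some} club below $f(a)$, so no choice along $x$ is needed, and Lemma \ref{1.4} keeps $\sup(X\cap\la^+)$ below $\la^+$. But the lemma asserts that $\bigcap f``x$ contains a \emph{club}, and your argument only produces an unbounded subset $D$; the entire content of the lemma (in particular the $\ka$-completeness of the club filter) lives in the closure step, which you leave as a plan. That plan has a genuine obstruction in $\ZF$: to realize a limit point $\delta$ of $D$ as $\sup(Y\cap\la^+)$ for an elementary $Y$, you propose to take a union of a coherent chain of suitable submodels with suprema converging to $\delta$, but the weak LS property only asserts the \emph{existence} of suitable hulls, with no canonical choice function, so selecting the members of the chain is itself an appeal to (at least dependent) choice. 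There is no definable "least" or "Skolem" hull available here, and taking the union of \emph{all} suitable submodels below $\delta$ does not work because an arbitrary (non-directed) union of elementary submodels need not be elementary. So as written the proof does not go through.

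The paper avoids this entirely with a different choice of $D$. Fix one suitable $X\prec V_\alpha$ and let $\calC=\{C\in X \mid C \text{ is a club in } \la^+\}$; by your step (i) argument, for every $a\in x$ there is $C\in\calC$ with $C\subseteq f(a)$, so $D:=\bigcap\calC\subseteq\bigcap f``x$, and $D$ is \emph{closed for free}, being an intersection of a set of genuine clubs. Unboundedness of $D$ is then obtained with a single second application of weak LS: pick $\delta<\ka$ with the transitive collapse of $X$ in $V_\delta$ and take $Y\prec V_\beta$ suitable with $V_\delta\subseteq Y$ and $X\in Y$; since $X$ is the surjective image of a set in $V_\delta\subseteq Y$, one gets $X\subseteq Y$, hence every $C\in\calC$ lies in $Y$ and $\sup(Y\cap\la^+)\in C$ by your cofinality argument, so $\sup(Y\cap\la^+)\in D$. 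In short: rather than collecting the suprema of many models (whose limit points you then cannot control), collect the clubs inside one model and intersect them; closure becomes trivial and only one further hull is needed. I recommend you rewrite the proof along these lines.
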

\begin{proof}
Take $\gamma<\ka$ with $x \in V_\gamma$ and 
sufficiently large $\alpha>\la^+$.
Take $X \prec V_\alpha$ such that
$V_\gamma \subseteq X$, $f, \la^+ \in X$, and
the transitive collapse of $X$ is in $V_\ka$.
Put $\calC=\{C \in X \mid C$ is a club in $\la^+\}$.
We know that for every $a \in x$
there is a club $C \in \calC$ with $C \subseteq f(a)$.
Let $D=\bigcap \calC \subseteq \bigcap f``x$.
It is enough to see that $D$ is a club in $\la^+$.
Closure is clear, so we check that $D$ is unbounded in $\la^+$.
Take $\xi<\la^+$.
Fix $\delta<\ka$ such that the transitive collapse of $X$ is in $V_\delta$.
Again, take another large $\beta>\alpha$ and $Y \prec V_\beta$
such that $V_\delta \subseteq Y$, 
$X, \xi, D \in Y$, the transitive collapse of $Y$ is in $V_\ka$.
There is a surjection from $V_\delta$ onto $X$,
hence we have $X \subseteq Y$, and $\calC \subseteq Y$.
Note that $\sup(Y \cap \la^+)<\la^+$,
otherwise we can take a cofinal map from the transitive collapse of $Y$ into $\la^+$,
which contradicts to Lemma \ref{1.4}.
$\xi<\sup(Y \cap \la^+)<\la^+$, so it is sufficient to check that
$\sup(Y \cap \la^+) \in D=\bigcap \calC$.
For each $C \in \calC$, we have $C \in Y$.
Since $C$ is a club in $\la^+$ and $\sup(Y \cap \la^+)<\la^+$,
we have $\sup(Y \cap \la^+) \in C$.
\qed\end{proof}

We also show a variant of Fodor's lemma for weakly LS cardinals.
\begin{lemma}\label{1.6}
Let $\ka$ be a weakly LS cardinal.
Let $\la \ge \ka$ be a cardinal, and $f:\la^+ \setminus \{0\} \to \la^+$
be a regressive function.
Then there is $\gamma<\la^+$ such that
the set $\{\xi \in \la^+ \mid f(\xi) \le \gamma\}$ is stationary in $\la^+$.
\end{lemma}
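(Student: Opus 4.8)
\emph{Proof plan.} The plan is to run the elementary-submodel proof of Fodor's lemma, but to replace the diagonal intersection of clubs (whose formation needs $\AC$, and whose being a club needs regularity of $\la^+$) by a single elementary submodel, with $\sup(X\cap\la^+)$ playing the role of a point in that diagonal intersection. Assume toward a contradiction that the conclusion fails, so that for every $\gamma<\la^+$ the set $S_\gamma=\{\xi\in\la^+\mid f(\xi)\le\gamma\}$ is non-stationary in $\la^+$; note that $\la^+\setminus\{0\}=\bigcup_{\gamma<\la^+}S_\gamma$ since $f$ is regressive.

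Next I would fix a sufficiently large limit ordinal $\alpha$ --- large enough that $f,\la^+\in V_\alpha$, that $V_\alpha$ correctly recognises which subsets of $\la^+$ are non-stationary, and that $\alpha>\ka$ --- and, using that $\ka$ is weakly LS (with the parameter $\gamma=\om<\ka$ and a parameter coding $f,\la,\la^+$), take $X\prec V_\alpha$ with $V_\om\subseteq X$, $f,\la,\la^+\in X$, and transitive collapse $Y\in V_\ka$; let $\pi:X\to Y$ be the collapsing map. Set $\delta=\sup(X\cap\la^+)$. As in the proof of Lemma \ref{1.5}, $\delta<\la^+$: otherwise $\pi$ restricted to $X\cap\la^+$ is an order isomorphism onto the ordinal $\pi(\la^+)$, which lies in $V_\ka$ since $Y\in V_\ka$, so its inverse is a cofinal map from $\pi(\la^+)\in V_\ka$ into $\la^+$, contradicting Lemma \ref{1.4}(2). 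Moreover $X\cap\la^+$ is infinite (it contains $\om$) and has no largest element (if $\eta\in X\cap\la^+$ then $\eta+1\in X\cap\la^+$), so $\delta$ is a nonzero limit ordinal and $\delta\in\dom(f)$.

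The contradiction then comes out as follows. Since $f$ is regressive, $f(\delta)<\delta=\sup(X\cap\la^+)$, so there is $\beta\in X\cap\la^+$ with $f(\delta)\le\beta$, whence $\delta\in S_\beta$. By assumption $S_\beta$ is non-stationary, so $V_\alpha$ satisfies ``there is a club in $\la^+$ disjoint from $S_\beta$''; since $S_\beta$ is definable from $f,\beta\in X$, elementarity gives $C\in X$ which $X$, and therefore $V$, sees as a club in $\la^+$ with $C\cap S_\beta=\emptyset$. But $C\in X$ is a club in $\la^+$ and $\delta=\sup(X\cap\la^+)<\la^+$, so (again as in the proof of Lemma \ref{1.5}) $\delta\in C$, contradicting $\delta\in S_\beta$ and $C\cap S_\beta=\emptyset$.

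I expect the only genuine point to check is $\delta<\la^+$, which is exactly what Lemma \ref{1.4} supplies; everything else is routine elementarity. In particular no form of choice is used, because we never choose a club disjoint from $S_\gamma$ simultaneously for all $\gamma$ --- a single witness $C\in X$ for the one relevant value $\beta$ suffices.
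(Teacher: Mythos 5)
Your proof is correct and follows essentially the same route as the paper's: take $X\prec V_\alpha$ containing $f,\la^+$ with transitive collapse in $V_\ka$, observe $\sup(X\cap\la^+)<\la^+$ via Lemma \ref{1.4}, pick $\gamma\in X\cap\la^+$ above $f(\sup(X\cap\la^+))$, and derive a contradiction from a club $C\in X$ disjoint from $S_\gamma$ since $\sup(X\cap\la^+)\in C$. The only cosmetic difference is that you phrase the whole argument as a proof by contradiction from the start, while the paper argues directly that the chosen $S_\gamma$ is stationary; the content is identical.
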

\begin{proof}
Let $f:\la^+ \setminus \{0\} \to \la^+$.
Take a large $\alpha>\la^+$ and $X \prec V_\alpha$
such that $\la^+, f \in X$ and the transitive collapse of $X$ is in $V_\ka$.
We know $\eta=\sup(X \cap \la^+)<\la^+$.
Pick $\gamma \in X \cap \la^+$ with $f(\eta) \le \gamma$.
Then $S=\{\xi \in \la^+ \mid f(\xi) \le \gamma\} \in X$.
If $S$ is non-stationary in $\la^+$,
then there is a club $C \in X$ with
$C \cap S=\emptyset$.
But $\eta =\sup(X \cap \la^+) \in C$, so $\eta \in C \cap S$.
This is a contradiction.
\qed\end{proof}

That there are no LS cardinals holds in any model in which there are no regular uncountable cardinals,
but this later statement is known to have large cardinals strength.
So the following natural question arises:
What is the consistency strength of ``no (weakly) LS cardinals''?
For this question, Asaf Karagila pointed out the following:
\begin{theorem}[Karagila \cite{Karagila2}]
Suppose $V$ satisfies $\AC$ and $\mathsf{GCH}$.
Then there is an extension of $V$  
with same cofinalities as $V$, such that
Fodor's lamma fails 
and the club  filter is not $\sigma$-complete 
on every regular uncountable cardinal.
\end{theorem}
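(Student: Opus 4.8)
The plan is to build the desired model as an \emph{iterated} symmetric extension of $V$, in the sense of Karagila's machinery for iterating symmetric extensions, performing one local symmetric step at every regular uncountable cardinal of $V$. The first task is to isolate what each local step must achieve. Fix a regular uncountable $\kappa$ of $V$. We want the final model $N$ to contain a sequence $\langle S^\kappa_n : n<\omega\rangle$ covering $\kappa$ by non-stationary sets, together with a regressive function on a stationary subset of $\kappa$ all of whose fibres are non-stationary, while $\kappa$ \emph{stays regular} in $N$. The delicate point --- and the reason a naive forcing fails --- is that if $N$ also contained a \emph{uniform} $\omega$-sequence $\langle C^\kappa_n : n<\omega\rangle$ of clubs with $C^\kappa_n \cap S^\kappa_n=\emptyset$, then the descending clubs $\bigcap_{m\le n} C^\kappa_m$ would, by the usual zig-zag argument, yield a definable cofinal $\omega$-sequence in $\kappa$, collapsing $\mathrm{cf}(\kappa)$ to $\omega$ and destroying cofinality preservation. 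So the symmetry at the $\kappa$-th step must be arranged so that for each fixed $n$ a club avoiding $S^\kappa_n$ is present in $N$, but no $\omega$-sequence of such clubs is: the generic partition and the regressive function are hereditarily symmetric (small support), whereas any choice function selecting the witnessing clubs has large support and is thrown out of $N$.

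For the single step I would take a forcing $\mathbb{P}_\kappa$ whose conditions approximate, by finitely much information at a time across $\omega$ coordinates, an $\omega$-indexed family of club-shooting instructions over generic co-non-stationary targets (so that a single coordinate is ${<}\kappa$-closed but the full, finite-support product singularises $\kappa$), carrying the natural coordinate-wise action of $\mathrm{Sym}(\omega)$ and the filter of subgroups generated by the pointwise stabilisers of finite sets of coordinates. Passing to the symmetric submodel $N_\kappa$, one checks: (i) every element of $N_\kappa$ is captured by a sub-forcing that is ${<}\kappa$-closed, so $N_\kappa$ adds no new ${<}\kappa$-sequences and hence $\kappa$ stays regular and all $V$-cofinalities are preserved; (ii) $N_\kappa$ contains the generic partition $\langle S^\kappa_n\rangle$ of $\kappa$ into non-stationary sets and a Fodor-bad regressive function; (iii) for each $n$, a club avoiding $S^\kappa_n$ lies in $N_\kappa$, but $N_\kappa$ has no $\omega$-sequence of such clubs, so there is no clash with (i). Here $\mathsf{GCH}$ in $V$ is used to keep $\mathbb{P}_\kappa$ of size $\le\kappa^+$, which makes the closure and homogeneity bookkeeping uniform across all $\kappa$.

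Finally I would iterate these steps over the class of all regular uncountable cardinals of $V$ in increasing order, invoking Karagila's iteration theorem for symmetric extensions: the iteration is taken with Easton-style supports (again using $\mathsf{GCH}$) so that no cardinals or cofinalities are collapsed; the step at $\kappa$ is ${<}\kappa$-closed over the model obtained so far, so it does not disturb the structure already built below $\kappa$, while the tail above $\kappa$ adds no new subsets of $\kappa$, so the sequence $\langle S^\kappa_n\rangle$, the Fodor-bad function, and the regularity of $\kappa$ all survive into the final symmetric model $N$. The combined symmetry system is the product of the local ones with the product filter, so elements of $N$ still have small support, which is exactly what re-proves global cofinality preservation and the simultaneous non-existence of the uniform club sequences at every $\kappa$. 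The main obstacle, in my view, is precisely this simultaneous demand at the level of the class-length iteration: arranging supports and symmetry filters so that (a) cofinalities are preserved everywhere and (b) at each $\kappa$ the bad objects are in $N$ while their uniform witnesses are not --- a single slip in the support bookkeeping either lets a cofinal $\omega$-sequence into $N$ or leaves $\mathrm{NS}_\kappa$ $\sigma$-complete. The local verification (i)--(iii) is routine symmetric-extension calculation once the right group and filter are fixed; the global coherence is where Karagila's iteration framework does the real work, and citing it is essentially unavoidable.
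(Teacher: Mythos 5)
This theorem is quoted from Karagila's \emph{Fodor's lemma can fail everywhere} and the paper gives no proof of it, so there is nothing internal to compare against; measured against the cited source, your outline is essentially Karagila's actual construction: a class-length, Easton-supported product/iteration of symmetric systems, one local step per regular $\kappa$, arranged so that the partition of $\kappa$ into $\omega$ non-stationary pieces (hence the Fodor-violating regressive function) is hereditarily symmetric with finite support, while any uniform $\omega$-sequence of witnessing clubs has infinite support and is excluded --- which is exactly how regularity of $\kappa$ survives even though the full generic extension singularizes it. The one caution is that your phrase ``the step at $\kappa$ is ${<}\kappa$-closed over the model obtained so far'' must be read as closure of the \emph{symmetric system} rather than of the underlying forcing (which genuinely adds a cofinal $\omega$-sequence to $\kappa$); that distinction, together with the density argument showing $S^\kappa_n$ meets every club coded away from coordinate $n$, is precisely what the local verification and Karagila's preservation machinery have to supply.
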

In his model, there are no (weakly) LS cardinals by Lemmas \ref{1.5} and \ref{1.6}.

Woodin (Theorem 227 in \cite{Woodin}) proved that
if $\la$ is  a singular cardinal and a limit of supercompact cardinals,
then $\la^+$ is regular, and the club filter over $\la^+$ is $\la^+$-complete.
Now we can replace supercompact cardinals in Woodin's result by
weakly LS cardinals:
\begin{corollary}
Let $\la$ be a singular weakly LS cardinal
(e.g., a singular limit of weakly LS cardinals).
\begin{enumerate}
\item There is no cofinal map from $V_\la$ into $\la^+$.
In particaular $\la^+$ is regular.
\item Let $f$ be a function from $V_\la$ into the club filter over $\la^+$.
Then $\bigcap f``V_\la$ contains a club in $\la^+$.
In particular  the club filter over $\la^+$ is $\la^+$-complete.
\item For every regressive function $f:\la^+ \setminus \{0\} \to \la^+$,
there is $\gamma<\la^+$ such that  the set $\{\xi <\la^+ \mid f(\xi)=\gamma\}$ is stationary in $\la^+$.
\end{enumerate}
\end{corollary}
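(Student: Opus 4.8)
The plan is to derive (1), (2), (3) in this order, each from the previous ones together with Lemmas~\ref{1.4}, \ref{1.5}, and~\ref{1.6}, exploiting the singularity of $\la$ to write $V_\la$ as an increasing union of sets that already lie inside $V_\la$. Since $\la$ is singular, fix a cofinal sequence $\seq{\la_i : i<\cf(\la)}$ with $\cf(\la)<\la$ and $\la_i<\la$ for all $i$, so that $V_\la=\bigcup_{i<\cf(\la)}V_{\la_i}$; note that each $V_{\la_i}$, and also $\cf(\la)$ itself, is an element of $V_\la$. In every application below we use the earlier lemmas with the weakly LS cardinal taken to be $\la$ and the relevant cardinal also taken to be $\la$ (legitimate since $\la$ is weakly LS and $\la\ge\la$), with the parameter $x$ ranging over the sets $V_{\la_i}$ and $\cf(\la)$, all of which are members of $V_\la$.

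For (1), suppose $f\colon V_\la\to\la^+$ were cofinal. By Lemma~\ref{1.4}(2) with $x=V_{\la_i}$ there is no cofinal map $V_{\la_i}\to\la^+$, so $\beta_i:=\sup f``V_{\la_i}<\la^+$ for each $i$; the map $i\mapsto\beta_i$ is definable from $f$ and $\seq{\la_i}$ (so no choice is used) and sends $\cf(\la)$ into $\la^+$, hence by Lemma~\ref{1.4}(2) with $x=\cf(\la)$ it is bounded, say below $\eta<\la^+$. Then $\sup f``V_\la=\sup_{i<\cf(\la)}\beta_i\le\eta<\la^+$, a contradiction. For the ``in particular'', if $\la^+$ were singular then $\delta:=\cf(\la^+)$ is a cardinal below $\la^+$, so $\delta\le\la$ and $\delta\subseteq V_\la$; extending a cofinal map $\delta\to\la^+$ to all of $V_\la$ by sending every point outside $\delta$ to $0$ gives a cofinal map $V_\la\to\la^+$, contradicting the first part of (1).

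For (2), given $f$ from $V_\la$ into the club filter over $\la^+$, Lemma~\ref{1.5} with $x=V_{\la_i}$ shows $A_i:=\bigcap f``V_{\la_i}$ contains a club in $\la^+$ and hence lies in the club filter; thus $i\mapsto A_i$ is a definable (choice-free) function from $\cf(\la)$ into the club filter, and Lemma~\ref{1.5} with $x=\cf(\la)$ shows $\bigcap_{i<\cf(\la)}A_i$ contains a club. Since $V_\la=\bigcup_i V_{\la_i}$ we have $\bigcap_{i<\cf(\la)}A_i=\bigcap f``V_\la$, which gives the first assertion. The ``in particular'' follows because any family of at most $\la$ many members of the club filter can be reindexed by a subset of $\la\subseteq V_\la$ and then extended to a function on all of $V_\la$ by sending the remaining points to $\la^+$, without changing the intersection, which therefore contains a club by what was just proved.

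For (3), by Lemma~\ref{1.6} there is $\gamma_0<\la^+$ with $S:=\{\xi<\la^+ : f(\xi)\le\gamma_0\}$ stationary. Then $S=\bigcup_{\gamma\le\gamma_0}\{\xi<\la^+ : f(\xi)=\gamma\}$ is a union of at most $\la$ many sets, and by (2) the club filter over $\la^+$ is closed under intersections of at most $\la$ many members, equivalently the non-stationary ideal is closed under unions of at most $\la$ many members; hence some $\{\xi<\la^+ : f(\xi)=\gamma\}$ with $\gamma\le\gamma_0$ is stationary. I do not expect a genuine obstacle: the content is entirely in the two-step use of the lemmas made available by singularity, and the only points needing attention are that the auxiliary maps ($i\mapsto\beta_i$, $i\mapsto A_i$, and the reindexings) are chosen definably so that $\AC$ is nowhere invoked, and that ``at most $\la$ many'' intersections are first rewritten as intersections indexed by $V_\la$ before invoking (2).
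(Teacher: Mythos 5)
Your proofs of (1) and (2) are essentially the paper's: decompose $V_\la=\bigcup_{i<\cf(\la)}V_{\la_i}$, apply Lemma~\ref{1.4} (resp.\ Lemma~\ref{1.5}) to each $V_{\la_i}\in V_\la$ and then once more to the index set $\cf(\la)\in V_\la$; your extra remarks on the ``in particular'' clauses are correct and harmless. Part (3) is where you genuinely diverge. The paper takes the \emph{minimal} $\gamma$ with $\{\xi\mid f(\xi)\le\gamma\}$ stationary, writes $\{\xi\mid f(\xi)<\gamma\}$ as a union of $\cf(\gamma)$-many sets $S_i=\{\xi\mid f(\xi)\le\gamma_i\}$, each non-stationary by minimality, notes $\cf(\gamma)<\la$ because $\la$ is singular, and invokes (2) only for $\cf(\gamma)$-many sets. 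You instead take any $\gamma_0$ from Lemma~\ref{1.6}, split $\{\xi\mid f(\xi)\le\gamma_0\}$ into its $\le\la$-many fibers, and use the full $\la^+$-completeness of the non-stationary ideal (dual to (2), obtained by the definable complementation and reindexing of $\gamma_0+1$ into $\la\subseteq V_\la$) to find a stationary fiber. Both are correct and choice-free; your version is more direct in that it avoids the minimality device and the observation $\cf(\gamma)<\la$, at the cost of using the full strength of (2) (closure under $\la$-many intersections rather than $<\la$-many) --- which (2) does supply. No gap.
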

\begin{proof}
Fix an increasing sequence $\seq{\la_i \mid i<\cf(\la)}$ with limit $\la$.

(1). Let $g:V_\la \to \la^+$.
For $i<\cf(\la)$, let $\alpha_i=\sup(g``V_{\la_i})$.
We have $\alpha_i<\la^+$ by Lemma \ref{1.4}.
Again, since $\cf(\la)<\la$,
we  have $\sup(g``V_\la)=\sup\{\alpha_i \mid i<\cf(\la)\}<\la^+$
by Lemma \ref{1.4}.

(2). For a given $f$, we have
that for every $i<\cf(\la)$, $\bigcap f``V_{\la_i}$ contains a club in $\la^+$
by Lemma \ref{1.5}.
Then $\bigcap f``V_\la=\bigcap_{i<\cf(\la)} \bigcap f``V_{\la_i}$ contains a club 
by Lemma \ref{1.5} again.

For (3), by Lemma \ref{1.6}, there is the minimal $\gamma<\la^+$
such that $\{\xi<\la^+ \mid f(\xi) \le \gamma\}$ is stationary in $\la^+$.
We will show that the set $\{\xi<\la^+ \mid f(\xi)=\gamma\}$ is stationary.
Since $\la$ is singular, we have $\cf(\gamma)<\la$.
Take a sequence $\seq{\gamma_i \mid i<\cf(\gamma)}$ with limit $\gamma$,
and let $S_i=\{\xi<\la^+ \mid f(\xi) \le \gamma_i\}$.
By the minimality of $\gamma$, each $S_i$ is non-stationary.
Then $\bigcup_{i<\cf(\gamma)} S_i$ is non-stationary by (2).
This means that the set  $\{\xi<\la^+ \mid f(\xi)=\gamma\}$ must be stationary.
\qed\end{proof}

\begin{question}
\begin{enumerate}
\item Is the statement $\ZF$+``there is a weakly LS cardinal which is not LS'' consistent?
\item Suppose there is a supercompact cardinal (or an extendible cardinal).
Then are there proper class many (weakly) LS cardinals?
\item Suppose $\la$ is a singular weakly LS cardinal.
Is the club filter over $\la^+$ normal?
\end{enumerate}
\end{question}

\section{Uniform definability of grounds}
In this section, we prove that if there are proper class many LS cardinals,
then all grounds are uniformly definable.
For this purpose, we introduce a very rough measure on sets, which will be used instead of  
cardinality.
\begin{definition}
For a set $x$,
the \emph{norm of $x$}, $\norm{x}$, is the least ordinal $\alpha$
such that there is a surjection from $V_\alpha$ onto $x$.
\end{definition}

The following is easy to check:
\begin{lemma}

\begin{enumerate}
\item $\norm{x} \le \mathrm{rank}(x)$.
\item If $x \subseteq y$ then $\norm{x} \le \norm{y}$.
\item If $M \subseteq V$ is a transitive model of (a sufficiently large fragment of) $\ZF$
and $x \in M$, then $\norm{x}^M \ge \norm{x}$.
\item If $X$ is an extensional set (that is, for every $x,y \in X$, \hbox{$x=y \iff \forall z \in X(z \in x \leftrightarrow z \in y)$})
and its transitive collapse  belongs to $V_\alpha$ for some $\alpha$,
then $\norm{X}  <\alpha$.
\end{enumerate}
\end{lemma}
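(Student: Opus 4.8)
The plan is to establish all four clauses by repeated use of one elementary observation, which I will call the \emph{fill-in trick}: if $\alpha$ is an ordinal, $S\subseteq V_\alpha$ is a nonempty set, and $h$ is any function whose range includes $S$, then fixing an arbitrary $a\in S$ and setting $g(z)=h(z)$ when $h(z)\in S$ and $g(z)=a$ otherwise produces a surjection from $\dom(h)$ onto $S$; in particular, taking $h$ to be the identity on $S$, there is always a surjection from $V_\alpha$ onto any nonempty $S\subseteq V_\alpha$. This needs no choice beyond selecting a single witness $a$, and in every clause the case $x=\emptyset$ is trivial since $\norm{\emptyset}=0$. For (1), with $\beta=\mathrm{rank}(x)$ we have $x\subseteq V_\beta$, so the fill-in trick yields a surjection $V_\beta\to x$ and hence $\norm{x}\le\beta$ (this also shows $\norm{x}$ is well-defined). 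For (2), I would let $\alpha=\norm{y}$, fix a surjection $f\colon V_\alpha\to y$, and apply the fill-in trick to $h=f$ with $S=x\subseteq y$, obtaining a surjection $V_\alpha\to x$, so $\norm{x}\le\alpha=\norm{y}$.

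For (3), put $\alpha=\norm{x}^M$ and fix, inside $M$, a surjection $f\colon (V_\alpha)^M\to x$. Because $M$ is transitive, $(V_\alpha)^M=V_\alpha\cap M\subseteq V_\alpha$, and $f$, being an element of $M\subseteq V$, is genuinely a function in $V$ with domain $(V_\alpha)^M$ which still maps onto $x$. The fill-in trick then converts $f$ into a surjection $V_\alpha\to x$ in $V$, so $\norm{x}\le\alpha=\norm{x}^M$. The only role of the hypothesis that $M$ satisfies a sufficiently large fragment of $\ZF$ is to guarantee that $(V_\alpha)^M$ is a set of $M$ on which $M$ can construct such an $f$; no reflection or replacement beyond that is used.

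For (4), let $\pi\colon X\to \overline{X}$ be the transitive collapse, so that $\overline{X}\in V_\alpha$ gives $\beta:=\mathrm{rank}(\overline{X})<\alpha$ and $\overline{X}\subseteq V_\beta$. The inverse collapse $\pi^{-1}\colon \overline{X}\to X$ is a bijection, and composing a fill-in surjection $V_\beta\to\overline{X}$ with $\pi^{-1}$ gives a surjection $V_\beta\to X$, whence $\norm{X}\le\beta<\alpha$. I do not expect any genuine obstacle here; the only point deserving a line of care is the absoluteness remark in (3), namely that a surjection onto $x$ witnessed inside $M$ remains a function onto $x$ in $V$, which follows from $M\subseteq V$ together with the fact that $M$ computes the correct membership relation on the transitive set $V_\alpha\cap M$.
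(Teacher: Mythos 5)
Your proof is correct, and since the paper simply states this lemma as ``easy to check'' without giving an argument, your fill-in trick (extend a partial surjection by sending everything else to a fixed element, with the empty-set case handled separately) is exactly the routine verification the author intends, including the right absoluteness observation $(V_\alpha)^M = V_\alpha \cap M \subseteq V_\alpha$ in clause (3) and the use of the Mostowski collapse in clause (4). No changes needed.
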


\begin{definition}
Let $\Z^*$ be the theory $\Z$, $\ZF-$Replacement Scheme, with the conjunction of the following statements:
\begin{enumerate}
\item Every set $x$ has  transitive closure $\mathrm{trcl}(x)$.
\item Every set $x$ has  rank, that is,
there is a surjection from $\mathrm{trcl}(x)$
onto some ordinal $\alpha$ such that
$f(y)=\sup\{f(z)+1 \mid z \in y\}$ for every $y \in \mathrm{trcl}(x)$.
Such an $\alpha$ is the rank of $x$.
\item For every ordinal $\alpha$, the collection of sets with rank $<\alpha$ forms a set.
\item Every extensional set has a (unique) transitive collapse and a collapsing map,
that is, for every set $X$, if $\forall x, y \in X ( x=y \iff \forall z \in X(z \in x
\leftrightarrow z \in y))$,
then there is a transitive set $Y$ and an $\in$-isomorphism from $X$ onto $Y$.
\end{enumerate}

For a transitive model $M$ of $\Z^*$ and $\alpha \in M \cap \mathrm{ON}$,
let $M_\alpha=M \cap V_\alpha$.
We know $M_\alpha \in M$.
\end{definition}

\begin{note}
Let $M$ be a transitive model of $\Z^*$.
\begin{enumerate}
\item For $\alpha \in M \cap \mathrm{ON}$,
$M_{\alpha+1}=\p(M_\alpha)^M=\p(M_\alpha) \cap M$,
and $M_\alpha=\bigcup_{\beta<\alpha} M_\beta$ if $\alpha$ is limit.
\item $M=\bigcup_{\alpha \in M \cap  \mathrm{ON}} M_\alpha$.
\item For $\gamma \in M \cap \mathrm{ON}$, if
$V_\gamma$ is a model of $\mathsf{Z}^*$
then $M_\gamma$ is also a model of $\mathsf{Z}^*$.
\end{enumerate}
\end{note}

For models of $\Z^*$, we define variants of the covering and approximation properties in Hamkins \cite{Hamkins}.

\begin{definition}
Let $M \subseteq V$ be a transitive model of $\Z^*$.
Let $\alpha \in M$ be an ordinal.
\begin{enumerate}
\item We say that $M$ satisfies the \emph{$\alpha$-norm covering property} (for $V$)
if for every $\beta \in M \cap \mathrm{ON}$ and set $x \subseteq M_\beta$, if $\norm{x}<\alpha$
then there is $y \in M$ such that $x \subseteq y$ and $\norm{y}^M<\alpha$.
\item We say that $M$ satisfies the \emph{$\alpha$-norm approximation property} (for $V$)
if for every $\beta \in M \cap \mathrm{ON}$ and  set $x \subseteq M_\beta$, 
if $x \cap a \in M$ for every $a \in M$ with $\norm{a}^M<\alpha$,
then $x \in M$.
\end{enumerate}
\end{definition}
\begin{note}
\begin{enumerate}
\item $M$ satisfies the  $\alpha$-norm covering property
if and only if for every $\beta \in M \cap \mathrm{ON}$ and set $x \subseteq M_\beta$, if $\norm{x}<\alpha$
then there is $y \in M_{\beta+1}$ such that $x \subseteq y$ and $\norm{y}^M<\alpha$.
\item $M$ satisfies the $\alpha$-norm approximation property
if and only if for every $\beta \in M \cap \mathrm{ON}$ and  set $x \subseteq M_\beta$, 
if $x \cap a \in M$ for every $a \in M_{\beta+1}$ with $\norm{a}^M<\alpha$,
then $x \in M$.
\end{enumerate}
\end{note}


\begin{lemma}\label{2.7}
Let $M \subseteq V$ be a  transitive model of $\Z^*$.
Let $\gamma \in M \cap \mathrm{ON}$ be an ordinal,
and
suppose $M$ satisfies the $\gamma$-norm covering and the $\gamma$-norm approximation
properties for $V$.
Fix $\alpha >\gamma $ with $\alpha \in M$, and let $\beta > \alpha$ and $X \prec V_\beta$ be such that:
\begin{enumerate}
\item $V_\gamma \subseteq X$.
\item $M_{\alpha+1}, \gamma \in X$.
\item ${}^{V_\gamma} (X \cap V_\alpha) \subseteq X$.
\end{enumerate}
Then $X \cap M_\alpha \in M$.
\end{lemma}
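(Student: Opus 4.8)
The goal is to show $X \cap M_\alpha \in M$, and the natural strategy is to verify the hypotheses of the $\gamma$-norm approximation property for the set $x = X \cap M_\alpha$, viewed as a subset of $M_\alpha$. That is, I must show: (i) $\norm{x}$ is small enough to be useful, and more importantly (ii) for every $a \in M$ with $\norm{a}^M < \gamma$, the intersection $(X \cap M_\alpha) \cap a$ lies in $M$. Once (ii) is established, approximation immediately yields $X \cap M_\alpha \in M$, which is exactly the conclusion. So the real content is a single approximation-style computation; the covering property will enter as an auxiliary tool to control norms.

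For step (ii), fix $a \in M$ with $\norm{a}^M < \gamma$. Then $a$ is the surjective image of $M_\delta$ for some $\delta < \gamma$, hence $\norm{a} \le \delta < \gamma$ in $V$ as well, and in particular $|a| $ is "below $\gamma$" in the rough sense that there is a surjection $g : V_\delta \twoheadrightarrow a$ with $\delta < \gamma$. I want to argue that $(X \cap M_\alpha) \cap a \in M$. The key observation is that the closure hypothesis ${}^{V_\gamma}(X \cap V_\alpha) \subseteq X$, together with $V_\gamma \subseteq X$, forces $X$ to be "$\gamma$-closed" in a way that pins down $X \cap b$ for any $b \in X \cap V_\alpha$ of norm $<\gamma$: indeed, if $b \in X$ and there is a surjection from $V_\delta$ ($\delta < \gamma$) onto $b$, one can find such a surjection inside $X$ (by elementarity, using that $M_{\alpha+1} \in X$ and the norm is witnessed in $M_{\alpha+1}$), and then since $V_\delta \subseteq V_\gamma \subseteq X$ we get $b = (\text{image of the surjection}) \subseteq X$, so $X \cap b = b \in X$. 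Applying this with $b$ a suitable element of $M$ covering $a$: by the $\gamma$-norm covering property there is $b \in M_{\alpha+1} \subseteq$ (something in $X$, using $M_{\alpha+1} \in X$) with $a \subseteq b$ and $\norm{b}^M < \gamma$; by elementarity we may take $b \in X$. Then $X \cap a = (X \cap b) \cap a = b \cap a = a$ if $a \subseteq X$ — so the upshot is $a \subseteq X$ whenever $\norm{a}^M < \gamma$, and therefore $(X \cap M_\alpha) \cap a = a \cap M_\alpha = a$, which is trivially in $M$. (One must be slightly careful that $a \subseteq M_\alpha$; but $(X\cap M_\alpha)\cap a = a \cap M_\alpha \in M$ since $M_\alpha \in M$ and $a\in M$, so this case is clean regardless.)

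The one remaining point is that the approximation property as stated requires $x = X \cap M_\alpha$ to be a subset of $M_\beta'$ for some ordinal $\beta' \in M \cap \mathrm{ON}$ — this is clear since $X \cap M_\alpha \subseteq M_\alpha$ and $\alpha \in M$. So all the hypotheses of the $\gamma$-norm approximation property are met, and it delivers $X \cap M_\alpha \in M$, as desired.

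I expect the main obstacle to be step (ii): making precise the claim that "$a \subseteq X$ whenever $\norm{a}^M < \gamma$." The subtlety is that the witnessing surjection $g : V_\delta \twoheadrightarrow a$ need not itself be an element of $X$, and $a$ need not be in $X$ either — so I cannot directly say $a = g``V_\delta \subseteq X$. The fix is to pass through the covering property to replace $a$ by a cover $b \in M_{\alpha+1}$ of norm $<\gamma$ in $M$, note $M_{\alpha+1} \in X$ so that by elementarity $X$ sees a surjection from some $V_\delta$, $\delta<\gamma$, onto $b$ (the statement "$b$ has $M$-norm $<\gamma$" is expressible from the parameter $M_{\alpha+1}$), and then use $V_\delta \subseteq V_\gamma \subseteq X$ to conclude $b \subseteq X$, hence $a \subseteq X$. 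Getting the quantifier structure and the choice of parameters right here — in particular ensuring the relevant surjection can be found as an element of $X$ — is where the care is needed; the rest is bookkeeping with the norm and the definition of $\Z^*$.
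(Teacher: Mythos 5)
Your overall strategy is the right one (and the paper's): reduce to the $\gamma$-norm approximation property, so that it suffices to show $(X\cap M_\alpha)\cap a\in M$ for every $a\in M_{\alpha+1}$ with $\norm{a}^M<\gamma$. But your execution of this step has a genuine gap. You try to show $a\subseteq X$ for every such $a$, by covering $a$ with some $b\in M_{\alpha+1}$ of small $M$-norm and then ``by elementarity'' taking $b\in X$. That elementarity step is not available: the statement $\exists b\in M_{\alpha+1}\,(a\subseteq b\wedge \norm{b}^M<\gamma)$ has $a$ as a parameter, and $a$ is an \emph{arbitrary} element of $M$ of small $M$-norm --- it need not belong to $X$. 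Worse, the conclusion you are aiming for is simply false: for any $z\in M_\alpha\setminus X$ the singleton $\{z\}$ lies in $M$ and has $M$-norm at most $1<\gamma$, yet $\{z\}\not\subseteq X$. So no cover of $\{z\}$ of small norm can lie in $X$ (your own correct observation shows any such cover would be a subset of $X$), and $(X\cap M_\alpha)\cap a$ is in general a \emph{proper} subset of $a$, not $a$ itself.

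The missing idea is to apply the covering/closure machinery not to $a$ but to the set $a\cap X\cap M_\alpha$. This set \emph{is} a subset of $X\cap V_\alpha$ and has norm $\le\norm{a}<\gamma$, so a surjection from $V_\gamma$ onto it is a function from $V_\gamma$ into $X\cap V_\alpha$; the hypothesis ${}^{V_\gamma}(X\cap V_\alpha)\subseteq X$ (which your argument never actually uses at the critical point) then puts that surjection, hence its range $a\cap X\cap M_\alpha$, inside $X$ as an \emph{element}. Now the parameters $a\cap X\cap M_\alpha$, $M_{\alpha+1}$, $\gamma$ all lie in $X$, so elementarity legitimately produces a cover $x\in X\cap M_{\alpha+1}$ with $\norm{x}^M<\gamma$ and $a\cap X\cap M_\alpha\subseteq x$; your ``key observation'' then gives $x\subseteq X$, whence $a\cap X\cap M_\alpha=a\cap x\in M$. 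This repairs the argument, but as written your proof does not go through.
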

\begin{proof}
By the $\gamma$-norm approximation property of $M$,
it is enough to see that
for every $a \in M_{\alpha+1}$, if $\norm{a}^M<\gamma$ then
$a \cap X \cap M_\alpha  \in M$.
Fix $a \in M_{\alpha+1}$ with $\norm{a}^M<\gamma$.
Note that $\norm{a}<\gamma$.
Since $a \cap X  \cap M_\alpha \subseteq X$ and $\norm{a \cap X \cap M_\alpha} \le \norm{a}<\gamma$,
there is a surjection from $V_\gamma$ onto $a \cap X \cap M_\alpha$.
${}^{V_\gamma} (X \cap V_\alpha) \subseteq X$,
hence we have $a \cap X \cap M_\alpha \in X$.
Because $M$ satisfies the  $\gamma$-norm covering property for $V$,
there is some $x \in M_{\alpha+1}$ such that
$\norm{x}^M<\gamma$ and $a \cap X \cap M_\alpha \subseteq x$.
By the elementarity of $X$,
we may assume that $x \in X$.
Note that $x \subseteq X$ since $\norm{x} \le \norm{x}^M <\gamma$ and $V_\gamma \subseteq X$.
Then we have $a \cap X \cap M_\alpha
=a \cap x \in M$.
\qed\end{proof}

\begin{lemma}\label{2.8}
Let $M, N \subseteq V$ be transitive models of $\Z^*$ with
$M \cap \mathrm{ON}=N \cap \mathrm{ON}$.
Let $\ka$ be an LS cardinal with $\ka \in M \cap N$,
and suppose there is $\gamma<\ka$ such that 
$M$ and $N$ satisfy the $\gamma$-norm covering and the  $\gamma$-norm approximation
properties for $V$.
If $M_\ka=N_\ka$,
then $M=N$.
\end{lemma}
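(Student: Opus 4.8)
The plan is to prove $M_\alpha = N_\alpha$ for all $\alpha \in M \cap \mathrm{ON}$ by transfinite induction; this suffices because $M = \bigcup_\alpha M_\alpha$ and $N = \bigcup_\alpha N_\alpha$. The cases $\alpha \le \ka$ and $\alpha$ a limit are trivial (in the first $M_\alpha = M_\ka \cap V_\alpha = N_\ka \cap V_\alpha = N_\alpha$, in the second it is a union), so all the content is the successor step: assuming $\ka \le \alpha$ and $M_\alpha = N_\alpha$, derive $M_{\alpha+1} = N_{\alpha+1}$. Since $M_{\alpha+1} = \mathcal{P}(M_\alpha) \cap M$ and $N_{\alpha+1} = \mathcal{P}(M_\alpha) \cap N$, it is enough, by symmetry, to fix $x \subseteq M_\alpha$ with $x \in M$ and show $x \in N$. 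For this I would invoke the $\gamma$-norm approximation property of $N$, which reduces the task to showing $x \cap a \in N$ for every $a \in N_{\alpha+1}$ with $\norm{a}^N < \gamma$.

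So fix such an $a$. Using that $\ka$ is an LS cardinal, I would choose a large $\alpha' > \alpha$ with $a, M_{\alpha+1}, N_{\alpha+1}, \gamma \in V_{\alpha'}$ and an elementary submodel $X \prec V_{\beta'}$ with $\beta' > \alpha'$, $V_\gamma \subseteq X$, $a, M_{\alpha+1}, N_{\alpha+1}, \gamma \in X$, ${}^{V_\gamma} (X \cap V_{\alpha'}) \subseteq X$ (so also ${}^{V_\gamma} (X \cap V_\alpha) \subseteq X$), and transitive collapse in $V_\ka$. Applying Lemma~\ref{2.7} to $M$ and to $N$ (both satisfy the $\gamma$-norm covering and approximation properties by hypothesis), I get $X \cap M_\alpha \in M$ and $X \cap N_\alpha \in N$; since $M_\alpha = N_\alpha$ these are the same set, call it $X_\alpha$, and it lies in $M \cap N$. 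From $\norm{a} \le \norm{a}^N < \gamma$ there is a surjection $V_\gamma \to a$, which by elementarity may be found inside $X$; since $V_\gamma \subseteq X$ this gives $a \subseteq X$, and together with $a \subseteq N_\alpha = M_\alpha$ this yields $a \subseteq X \cap M_\alpha = X_\alpha$.

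The key step, which I expect to be the main obstacle, is to transport membership questions of the form ``is $b \in M$?'' for subsets $b \subseteq X_\alpha$ down to the level $V_\ka$, where $M$ and $N$ coincide --- a priori neither $x$ nor $a$ need belong to both models, and this agreement is the only handle we have. Now $X_\alpha = X \cap M_\alpha$ is $\in$-downward closed in $X$ (because $M$ and $V_\alpha$ are transitive), so its transitive collapse $\pi: X_\alpha \to \bar X_\alpha$ is a restriction of the collapse of $X$; hence $\bar X_\alpha \in V_\ka$, and since the Mostowski collapse is unique and absolute and its existence is an axiom of $\Z^*$, both $\pi$ and $\bar X_\alpha$ lie in $M \cap N$. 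Then for any $b \subseteq X_\alpha$ we have $\pi``b \subseteq \bar X_\alpha$, so $\pi``b \in V_\ka$; and because $\pi, \pi^{-1} \in M$, Separation in $M$ gives $b \in M \iff \pi``b \in M \iff \pi``b \in M_\ka$, and similarly $b \in N \iff \pi``b \in N_\ka$. Since $M_\ka = N_\ka$, this yields $b \in M \iff b \in N$ for every $b \subseteq X_\alpha$.

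It remains to apply this equivalence twice. First, with $b = a$: as $a \in N$ we conclude $a \in M$ --- this uses the equivalence in the direction needed just to form $x \cap a$ inside $M$, a small but essential point. Then $x \cap a \in M$ by Separation in $M$, and since $x \cap a \subseteq a \subseteq X_\alpha$, a second application (with $b = x \cap a$) gives $x \cap a \in N$, which is what the approximation property demanded. This finishes the successor step and hence the induction. Everything except the collapse argument of the third paragraph is routine bookkeeping with $\Z^*$ and Lemma~\ref{2.7}; the crux is the recognition that one should collapse the LS-small submodel $X$ so that the image of $X_\alpha$ sits in $V_\ka$, making the hypothesis $M_\ka = N_\ka$ applicable to arbitrary subsets.
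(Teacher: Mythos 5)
Your proof is correct and follows essentially the same route as the paper's: induction on rank, Lemma~\ref{2.7} to put $X\cap M_\alpha=X\cap N_\alpha$ into $M\cap N$, and the Mostowski collapse of that set into $V_\ka$ to transfer membership of subsets via $M_\ka=N_\ka$, finishing with the $\gamma$-norm approximation property. The only (cosmetic) difference is organizational: the paper first proves symmetrically that every $x\in M_\alpha$ of norm $<\gamma$ lies in $N$ and vice versa and then invokes approximation, whereas you interleave the two, explicitly noting the small but necessary step of first transferring the test set $a$ into $M$ before forming $x\cap a$ there.
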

\begin{proof}
We show $M_\alpha=N_\alpha$
by induction on $\alpha \in M \cap \mathrm{ON}$.
The cases that $\alpha \le \ka$ and $\alpha$ is limit 
are clear.
So suppose $\alpha=\bar{\alpha}+1$ for some $\overline{\alpha} \ge \ka$ and
$M_{\bar{\alpha}}=N_{\bar{\alpha}}$.

First, we show that for every $x \in M_{\alpha}$,
if $\norm{x}<\gamma$ then $x \in N$.
Since $\ka$ is LS,
we can find a large $\beta > \alpha$ and $X \prec V_\beta$
such that:
\begin{enumerate}
\item $V_\gamma \subseteq X$,
\item ${}^{V_\gamma} (X \cap V_\alpha) \subseteq X$,
\item the transitive collapse of $X$ is in $V_\kappa$, and
\item  $X$ contains all relevant objects.
\end{enumerate}
Then, by Lemma \ref{2.7},
we have that $X \cap M_\alpha \in M$ and $X \cap N_\alpha \in N$.
In particular $X \cap M_{\bar{\alpha}} \in M$ and $X \cap N_{\bar{\alpha}} \in N$.
On the other hand, $M_{\bar{\alpha}}=N_{\bar{\alpha}}$ by the induction hypothesis.
Hence we have $X \cap M_{\bar{\alpha}}=X \cap N_{\bar{\alpha}} \in M \cap N$.
Since $\norm{x}<\gamma$, we have $x \subseteq X$.
Thus we have $x \subseteq X \cap M_{\bar{\alpha}}=X \cap N_{\bar{\alpha}}$.
$X \cap M_{\bar{\alpha}}$ is extensional,
so we can take the transitive collapse $Y$ of $X \cap M_{\bar{\alpha}}$
and the collapsing map $\pi: X \cap M_{\bar{\alpha}} \to Y$.
Note that $Y$ and $\pi$ are in $M \cap N$ because $M$ and $N$ are models of $\Z^*$.
The transitive collapse of $X$ is in $V_\kappa$,
hence  $Y$ is also in $V_\kappa$ and thus $Y \in M_\ka=N_\ka$.
Put $y=\pi``x \subseteq Y \in M_\ka$.
$y$ is in $M_\ka$, and hence is also in $N_\ka$.
Now we have $x=\pi^{-1}``y \in N$.

The same argument shows that
for every $x \in N_{\alpha}$,
if $\norm{x}<\gamma$ then $x \in M$.
Finally, by the $\gamma$-norm approximation property of $M$ and $N$,
we have $\p(M_{\bar{\alpha}}) \cap M=\p(M_{\bar{\alpha}}) \cap N$, hence
$M_\alpha=N_\alpha$.
\qed\end{proof}

\begin{corollary}\label{2.9}
Suppose there are proper class many LS cardinals.
Then there is a first-order formula $\varphi'(x,y)$ of set-theory such that:
\begin{enumerate}
\item $W_r'=\{x \mid \varphi'(x,r)\}$ is a transitive class model of $\ZF$
containing all ordinals such that $r \in W'_r$, and $W'_r$ satisfies the $\alpha$-norm covering and
approximation properties for $V$ for some $\alpha$.
\item For every transitive class model $W \subseteq V$ of $\ZF$ containing all ordinals,
if $W$ satisfies the $\alpha$-norm covering and approximation properties for $V$ for some $\alpha$,
then there is $r$ with $W_r'=W$.
\end{enumerate}
\end{corollary}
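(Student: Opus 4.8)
The plan is to mimic the standard Laver–Woodin–Reitz definability argument, but using the norm in place of cardinality and the uniqueness result of Lemma \ref{2.8} in place of the usual ground-model uniqueness.

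First I would fix, for each set $r$, a natural candidate class. The idea is: given $r$, look for a transitive class $W$ containing all ordinals which (a) is a model of $\ZF$, (b) contains $r$, and (c) satisfies the $\gamma$-norm covering and $\gamma$-norm approximation properties for $V$ for some $\gamma$ below some LS cardinal $\ka$, and (d) is ``coded below $\ka$'' in the sense that $W_\ka$ is some specific, first-order identifiable set. The crucial point is that, by Lemma \ref{2.8}, if two such classes $W, W'$ agree up to an LS cardinal $\ka$ (and both have the covering/approximation properties with the same witnessing $\gamma < \ka$), then $W = W'$. So the candidate $W$ is determined by the pair $(\ka, W_\ka)$ together with the witnessing $\gamma$; this is a set-sized amount of information, so one can quantify over it first-order. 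Concretely, I would let $\varphi'(x,y)$ say: ``there exist an LS cardinal $\ka$, an ordinal $\gamma < \ka$, and a set $w$ such that $w$ is a model of $\Z^*$ with $w = V_\ka \cap \{z : \text{(coding condition on } w, y)\}$, and such that there is a (unique, by Lemma \ref{2.8}) transitive class $W$ of $\ZF$ containing all ordinals with $W_\ka = w$, satisfying the $\gamma$-norm covering and approximation properties for $V$, with $y \in W$, and $x \in W$''. The genuinely delicate bookkeeping is making ``there is a transitive class $W$ with $W_\ka = w$ and the covering/approximation properties'' into a first-order condition: one wants to say it in terms of the set $w$ alone. This is where I expect to reconstruct the classical trick — the covering and approximation properties let you recover $W_{\alpha+1}$ from $W_\alpha$ together with $V$ (as the sets $x \subseteq W_\alpha$ all of whose $\gamma$-norm-small pieces lie in the class being built), so the whole class $W$ is definable by recursion from $w$ and $\ka$ and $\gamma$; hence ``$W$ exists and has the properties'' becomes a $\Sigma$-over-$V$ assertion about the set $(w, \ka, \gamma)$, and its uniqueness is Lemma \ref{2.8}.

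For clause (1) I must check that $W'_r = \{x : \varphi'(x,r)\}$ is indeed always a transitive class model of $\ZF$ with $r$ in it and with the norm covering/approximation properties. Here one uses: whenever the defining data exist, the reconstructed class $W$ genuinely is a $\ZF$-model (by Theorem \ref{basic ZF}: it is closed under Gödel operations and almost universal — almost universality follows since $V_\alpha \cap W \in W$ for cofinally many $\alpha$); $r \in W$ is imposed; the norm properties are imposed; and transitivity plus containing all ordinals are part of the recursion. And whenever the defining data do \emph{not} exist, I must still make $\varphi'(x,r)$ define something acceptable — the standard fix is to let $W'_r = V$ in that degenerate case (which trivially satisfies everything). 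For clause (2), given any transitive class model $W \subseteq V$ of $\ZF$ containing all ordinals with the $\alpha$-norm covering and approximation properties for some $\alpha$, I use the hypothesis of proper-class-many LS cardinals to pick an LS cardinal $\ka > \alpha$ (so take $\gamma = \alpha$, ensuring $\gamma < \ka$ and the properties hold with this $\gamma$), set $r$ to code the pair $(\ka, W_\ka)$ appropriately — say $r = \langle \ka, \alpha, W_\ka \rangle$ — and then verify $W'_r = W$: the defining data of $\varphi'(\cdot, r)$ exist (witnessed by $W$ itself), so the reconstructed class is \emph{some} class with those properties agreeing with $W$ at $\ka$, and by Lemma \ref{2.8} that class is exactly $W$.

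The main obstacle, as flagged, will be the first-order codability of ``the class $W$ exists and has the norm covering/approximation properties'', purely from the set-parameter $r$: one must check that the transfinite recursion ``$W_{\alpha+1} = \{x \subseteq W_\alpha : \text{every } \gamma\text{-norm-small piece of } x \text{ lies in } W\}$'' is a legitimate first-order recursion over $V$ whose well-definedness does not secretly require the class $W$ we are trying to build, and that the covering property is what guarantees the recursion stays inside a genuine $\ZF$-model (it prevents $W$ from being ``too thin'' to be almost universal). A secondary subtlety is that $\Z^*$-models $w = V_\ka \cap (\text{something})$ need not in general extend to a $\ZF$-class, so $\varphi'$ must explicitly demand that the extension exists and is a $\ZF$-model — but once it does, Lemma \ref{2.8} pins it down, and the verification of clauses (1) and (2) is then routine bookkeeping along the lines above.
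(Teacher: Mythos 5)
Your overall architecture matches the paper's: code the candidate class by a set parameter $r=\seq{X,\kappa,\alpha}$ with $\kappa$ an LS cardinal and $X=W_\kappa$, use Lemma \ref{2.8} for uniqueness, default to $W'_r=V$ in the degenerate case, and get $\ZF$-ness from Theorem \ref{basic ZF} via almost universality and closure under the G\"odel operations. But the one step you yourself flag as the main obstacle --- turning ``there is a transitive class $W$ with $W_\kappa=w$ satisfying the norm covering and approximation properties'' into a first-order condition --- is not resolved by the recursion you propose, and as written that recursion does not work. The rule $W_{\beta+1}=\{x\subseteq W_\beta : \text{every $\gamma$-norm-small piece of $x$ lies in } W\}$ is circular in two ways: (i) the ``small pieces'' $x\cap a$ are themselves subsets of $W_\beta$, i.e.\ candidate elements of the very level $W_{\beta+1}$ being defined, and the sets $a$ one intersects with are also required to lie in $W_{\beta+1}$ with $\norm{a}^W<\gamma$; and (ii) the quantity $\norm{a}^W$ is computed \emph{in $W$}, via surjections from some $W_\delta$ onto $a$, and such surjections live at ranks above $\beta$, so the side condition depends on parts of $W$ not yet constructed. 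The approximation property is a fixed-point characterization of each level, not a well-founded recursion; existence and definability of the fixed point is exactly what needs an argument.

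The paper's resolution is different and is the piece you are missing: instead of building $W$ by recursion, the formula $\varphi'$ asserts, for each cardinal $\lambda>\kappa$ with $V_\lambda\models\Z^*$, the \emph{existence} of a transitive \emph{set}-sized model $X^{r,\lambda}$ of $\Z^*$ of height $\lambda$ with $(X^{r,\lambda})_\kappa=X$ satisfying the $\alpha$-norm covering and approximation properties for $V$. This is an ordinary existential set quantifier, hence first-order; uniqueness of each $X^{r,\lambda}$ and coherence between different heights (namely $(X^{r,\lambda_0})_{\lambda_1}=X^{r,\lambda_1}$) both follow from Lemma \ref{2.8}, and $W'_r$ is defined as the union $\bigcup_\lambda X^{r,\lambda}$. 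For clause (2) the required witnesses exist because $W_\lambda$ itself is such a model. Your proof is salvageable by replacing the recursion with this ``union of uniquely determined set-sized approximations'' device, but without it the definability claim at the heart of the corollary is not established.
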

\begin{proof}
For a set $r$, we define $W_r'$ if $r$ satisfies the following conditions:
\begin{enumerate}
\item $r=\seq{X, \ka, \alpha}$ where
$\ka$ is an LS cardinal, $\alpha<\ka$,
and $X$ is a transitive set with $X \cap \mathrm{ON}=\ka$.
\item  For each cardinal $\la > \ka$, if $V_\la$ is a model of $\mathsf{Z}^*$,
then there is a unique transitive model $X^{r, \la}$ of $\Z^*$ such that
$X^{r, \la} \cap \mathrm{ON}=\la$, $(X^{r, \la})_\ka=X$,
and $X^{r, \la}$ satisfies the $\alpha$-norm covering and the $\alpha$-norm approximation 
properties for $V$.
\end{enumerate}
In this case, let $W_r'=\bigcup\{X^{r,\la} \mid  \la > \ka$ is a cardinal$\}$.
Otherwise, let $W_r'=V$.
It is clear that the collection $\{W_r' \mid r \in V\}$ is a uniformly definable collection of classes.
We see that $\{W_r' \mid r \in V\}$ is as required.

First we check  condition (1).
If $W_r'=V$, then it is clear.
Suppose not, then $r$ is of the form $\seq{X, \ka, \alpha}$.
It is clear that $W_r'$ is transitive and contains all ordinals.
Now fix cardinals $\la_0>\la_1>\ka$ such that $V_{\la_0}$ and $V_{\la_1}$ are models of $\mathsf{Z}^*$,
and let $X^{r, \la_0}$, $X^{r, \la_1}$ be transitive models of $\Z^*$.
It is routine to check that $(X^{r, \la_0})_{\la_1}$ is a model of $\Z^*$,
and satisfies the $\alpha$-norm covering and approximation properties.
Because $X=(X^{r, \la_0})_\ka=(X^{r, \la_1})_\ka$,
we have $(X^{r, \la_0})_{\la_1}=X^{r, \la_1}$ by Lemma \ref{2.8}.
This means that $(W_r')_\la=X^{r, \la}$ for every cardinal $\la >\ka$ with $V_\la$ a model of $\mathsf{Z}^*$,
and that $W_r'$ is almost universal and closed under the G\"odel operations.
Thus we have that $W_r'$ is a model of $\ZF$ by Theorem \ref{basic ZF}.
Moreover, by the definition of the norm covering and approximation properties,
it is also easy to check that $W_r'$ satisfies the $\alpha$-norm covering and approximation properties.
Finally, we have $X=(W_r')_\ka \in W_r'$, and $r \in W_r'$.

For (2), 
suppose $W$ is a  transitive class model of $\ZF$ and 
$W$ satisfies the $\alpha$-norm covering and approximation properties for $V$ for some $\alpha$.
Fix an LS cardinal $\ka>\alpha$,
and let $X=M_\ka$ and $r=\seq{X, \ka, \alpha}$.
For each cardinal $\la>\ka$, if $V_\la$ is a model of $\mathsf{Z}^*$ then
$W_\la$ is a transitive model of $\Z^*$,
satisfies the $\alpha$-norm covering and approximation properties for $V$,
and, by Lemma \ref{2.8},
$W_\la$ is a unique transitive model $M$ of $\Z^*$
satisfying the $\alpha$-norm covering and approximation properties for $V$ and
$M_\ka=X$.
Then we have $W=W_r'$ by the definition of $W_r'$.
\qed\end{proof}

\begin{note}
In  item (2) of the previous corollary, 
$W$  need not to be a class of $V$;
In fact, it is sufficient that $W$ is a transitive model of $\ZF$ satisfying the norm covering and approximation properties.
\end{note}

\begin{lemma}\label{2.10}
Let $\ka$ be a weakly LS cardinal.
Let $M \subseteq V$ be a ground of $V$,
and suppose there is a poset $\bbP \in M_\ka$ and
an $(M, \bbP)$-generic $G$ with $V=M[G]$.
Then $M$ satisfies the $\ka$-norm covering and the $\ka$-norm approximation properties for $V$.
\end{lemma}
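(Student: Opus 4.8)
The plan is to prove the two properties separately: the $\ka$-norm covering property by a direct argument with names, and the $\ka$-norm approximation property by feeding that covering property, once available, into a weak L\"owenheim--Skolem submodel argument.

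For covering, given $x \subseteq M_\beta$ with $\norm{x} < \ka$, I would fix $\alpha < \ka$ and a surjection $g \colon V_\alpha \to x$ in $V$; after enlarging $\alpha$ below $\ka$ and extending $g$ trivially I may assume $\bbP \in M_\alpha$ and $\alpha$ is a limit ordinal $>\om$. Fix a $\bbP$-name $\dot g \in M$ for $g$, and by (the proof of) Lemma~\ref{2.8+} a $\bbP$-name $\dot\tau \in M$ with $\mathrm{rank}^M(\dot\tau) < \alpha+\om$ and $\Vdash_\bbP \dot\tau = M[\dot G]_\alpha$, so that every $v \in V_\alpha = M[G]_\alpha$ is $\sigma_G$ for some $\bbP$-name $\sigma \in M$ of $M$-rank below $\alpha+\om$. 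Letting $N \in M$ be the set of all such names, so that $\norm{N}^M < \ka$, I would set
\[
y = \{\, d \in M_\beta \mid \exists q \in \bbP\ \exists \sigma \in N\ (q \Vdash_\bbP \dot g(\sigma) = \check{d})\,\}.
\]
Then $y \in M$ since the forcing relation is definable in $M$, $x \subseteq y$ by the truth lemma, and $\norm{y}^M < \ka$ because in $M$ the set $y$ is the range of a partial function on $\bbP \times N$, a set of $M$-norm below $\ka$. This yields the $\ka$-norm covering property.

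For approximation, suppose $x \subseteq M_\beta$ with $x \cap a \in M$ for every $a \in M$ with $\norm{a}^M < \ka$, and suppose toward a contradiction that $x \notin M$. Fix a $\bbP$-name $\dot x \in M$ with $\Vdash_\bbP \dot x \subseteq \check{M_\beta}$ and $\dot x_G = x$. Since $x \notin M$, the generic $G$ misses the open, $M$-definable set of conditions forcing $\dot x$ to be equal to some check-name $\check z$ with $z \in M$, hence $G$ contains some $p_0$ incompatible with every such condition, and then no $q \le p_0$ forces $\dot x = \check z$ for any $z \in M$. Working in $M$, for $q \le p_0$ let $U_q$ be the set of $d \in M_\beta$ such that $q$ does not decide $\check d \in \dot x$; then $U_q \ne \emptyset$ (otherwise $q$ would force $\dot x$ equal to a check-name), and $U_q \subseteq U_{p_0} \subseteq R$, where $R = \{\, d \in M_\beta \mid \exists p\,(p \Vdash_\bbP \check d \in \dot x)\,\} \in M$. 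Next I would use that $\ka$ is weakly LS: fix $\gamma < \ka$ with $\bbP \in V_\gamma$, a large $\theta$, and $X \prec V_\theta$ with $V_\gamma \subseteq X$, $\{\bbP, p_0, \dot x, M_\beta, R\} \subseteq X$, and transitive collapse in $V_\ka$. Put $c = R \cap X$; then $c \subseteq M_\beta$ and, since $X$ is extensional with transitive collapse in $V_\ka$, $\norm{c} \le \norm{X} < \ka$. For each $q \le p_0$ one has $q \in \bbP \subseteq V_\gamma \subseteq X$, and $U_q$ is definable over $V_\theta$ from parameters in $X$, so $U_q \in X$; as $U_q \ne \emptyset$, elementarity gives $\emptyset \ne U_q \cap X \subseteq R \cap X = c$, so $c$ meets every $U_q$ with $q \le p_0$. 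Applying the $\ka$-norm covering property already proved, fix $y \in M$ with $c \subseteq y$ and $\norm{y}^M < \ka$, and set $a = y \cap M_\beta \in M$; then $\norm{a}^M < \ka$ and $c \subseteq a$, so $a$ meets every $U_q$ with $q \le p_0$. Finally $x \cap a \in M$ by hypothesis, so some $q^* \in G$ forces $\dot x \cap \check a = \check{(x \cap a)}$; refining inside $G$ to $q^{**} \le q^*, p_0$, this $q^{**}$ decides $\check d \in \dot x$ for every $d \in a$, whence $a \cap U_{q^{**}} = \emptyset$ --- contradicting that $a$ meets $U_{q^{**}}$ since $q^{**} \le p_0$. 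Hence $x \in M$.

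I expect the covering half to be routine forcing bookkeeping; the real difficulty is in the approximation half, namely producing, inside $M$ and without any use of choice, a set $a \in M$ of $M$-norm $< \ka$ meeting every ``undecided'' set $U_q$ for $q \le p_0$. Under $\AC$ one would simply pick one witness from each $U_q$; the plan here is to build a small-norm witness set $c$ in $V$ out of a weak-L\"owenheim--Skolem submodel --- every $U_q$ being captured because $\bbP \subseteq X$ --- and then absorb $c$ into $M$ using the covering property. The remaining points to check are minor: that $\theta$ is large enough for $X$ and $V_\theta$ to compute the $\bbP$-forcing relation correctly, and that every selection in the argument is a single existential instantiation rather than a simultaneous choice.
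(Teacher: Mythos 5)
Your proof is correct and follows essentially the same route as the paper: the covering half is the same name-counting argument, and the approximation half uses the same key combination of a weakly LS hull $X$ with $\bbP\subseteq X$, absorption of $X\cap M_\beta$ into $M$ via the just-proved covering property, and the hypothesis $x\cap a\in M$ to produce a condition in $G$. The only cosmetic difference is that you extract the contradiction from the nonempty ``undecided'' sets $U_q$ rather than from a disagreement point of $A\,\triangle\, A_p$ located inside $X$ by elementarity, and the bookkeeping point you flag (getting $U_q\in X$) is handled by simply placing the decision relation for $\check d\in\dot x$ into $X$ as a parameter.
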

\begin{proof}
First we show that $M$ satisfies the $\ka$-norm covering property for $V$.
Take $\alpha$ and $x \subseteq M_\alpha$ with $\norm{x}<\kappa$.
Fix a limit $\gamma<\ka$ and a surjection $f:V_\gamma \to x$.
We may assume that $\bbP \in V_\gamma$.
We know $V_{\gamma} = \{\dot y_G \mid \dot y \in M_{\gamma}$ is a $\bbP$-name$\}$
(see Lemma \ref{2.8+}),
where $\dot y_G$ is the interpretation of $\dot y$ by $G$.
Hence we have a canonical surjection $\dot y \mapsto \dot y_G$ from all $\bbP$-names in $M_{\gamma}$ onto $V_\gamma$,
and  so we can take a surjection $g$ from $M_{\gamma}$ onto $x$.
Let $\dot g$ and $\dot x$ be  $\bbP$-names for $g$ and $x$ respectively.
We work in $M$.
Fix $p_0 \in G$ such that $p_0 \Vdash_{\bbP}$``$\dot g$ is a surjection from $M_{\gamma}$ onto $\dot {x} \subseteq M_\alpha$'' in $M$.
For $a \in M_{\gamma}$ and $p \in \bbP$ with $p \le p_0$,
take a unique $x_{a,p} \in M_\alpha$ with 
$p \Vdash_\bbP$``$\dot g(a)=x_{a,p}$'' if it exists.
If there is no such $x_{a,p}$, let $x_{a,p}=\emptyset$.
Let $x'=\{x_{a,p} \mid a \in M_{\gamma}, p \in \bbP\} \in M$.
Clearly $x \subseteq x'$.
Moreover we can easily take a surjection from $M_{\gamma} \times \bbP$ onto $x'$,
hence $\norm{x'}^M \le \gamma+\om <\kappa$.

For the $\kappa$-norm approximation property of $M$,
take $\alpha \in M$, $A \subseteq M_\alpha$,
and suppose $A \cap a \in M$ for every $a \in M_{\alpha+1}$ with $\norm{a}^M<\kappa$.
Take a $\bbP$-name $\dot A \in M$ for $A$.
Take $p_0 \in G$ with
$p_0 \Vdash_\bbP$``$\dot A \subseteq M_\alpha$,
and $\dot A \cap a \in M$ for every $a \in M_{\alpha+1}$ with $\norm{a}^M<\kappa$'' in $M$.
For $p \in \bbP$ with $p \le p_0$,
let $A_p=\{a  \in M_{\alpha} \mid  p \Vdash_\bbP$``$a \in \dot A$''$\} \in M$.
We claim that there is $p \in \bbP$ with
$A_p=A$, which completes our proof.

Suppose to the contrary that
there is no $p \in \bbP$ with $A_p=A$.
Take $\gamma<\ka$ with $\bbP \in V_\gamma$.
Since $\ka$ is a weakly LS cardinal,
we can find $\beta > \alpha$ and $X \prec V_\beta$ such that:
\begin{enumerate}
\item $V_\gamma \subseteq X$.
\item The transitive collapse of $X$ is in $V_\kappa$.
\item $X$ contains all relevant objects.
\end{enumerate}
Consider $M_\alpha \cap X$. Since the transitive collapse of $X$ is in $V_\kappa$,
we have $\norm{X}<\kappa$, and $\norm{M_\alpha \cap X}<\ka$ as well.
We know that $M$ satisfies the $\ka$-norm covering property,
thus we can find $x \in M$ such that
$\norm{x}^M<\kappa$ and $M_\alpha \cap X \subseteq x$.
We may assume that $x \in M_{\alpha+1}$.
By the assumption, we have $A'=A \cap x \in M$.
Thus there is $p \in G$
such that $p \le p_0$ and  $p \Vdash_\bbP$``$\dot A \cap x =A'$'',
which means that $A \cap x=A_p \cap x$.
Since $\bbP \in X$ and $\bbP \in V_\gamma$,
we have $\bbP \subseteq X$, hence $p \in X$, and $A_p \in X$ as well.
Since $A_p \neq A$, there is $a \in A \triangle A_p$.
Because $A_p, A \in X$, we may assume $a \in X$,
so $a \in X \cap M_\alpha \subseteq x$.
Then $a \in (A \triangle A_p) \cap x=
(A \cap x) \triangle (A_p \cap x)$,
this is a contradiction.
\qed\end{proof}

Now the uniform definability of grounds is immediate from
Corollary \ref{2.9} and Lemma \ref{2.10}:
\begin{corollary}\label{2.11}
Suppose there are proper class many LS cardinals.
Then there is a formula $\varphi(x,y)$ of set-theory such that:
\begin{enumerate}
\item $W_r=\{x \mid \varphi(x,r)\}$ is a ground of $V$ with $r \in W_r$.
\item For every ground $W$ of $V$,
there is $r$ with $W_r=W$.
\end{enumerate}
\end{corollary}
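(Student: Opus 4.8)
The plan is to combine Corollary~\ref{2.9} with Lemma~\ref{2.10}: the latter says that every ground of $V$ satisfies a norm covering/approximation property for some ordinal, and the former says that the transitive class models of $\ZF$ with such a property are uniformly definable. Concretely, I would fix the formula $\varphi'(x,y)$ of Corollary~\ref{2.9}, together with its classes $W_r'$ (recall that $r \in W_r'$ always holds), and then let $\varphi(x,r)$ be a formula defining the class $W_r := W_r'$ when $W_r'$ happens to be a ground of $V$, and $W_r := V$ otherwise. The one point needing (routine) care is that ``$W_r'$ is a ground of $V$'' is first-order in the parameter $r$: since $\{W_r' \mid r\}$ is a uniformly definable family of classes, this assertion unfolds to ``there exist $\bbP \in W_r'$ and a set $G$ such that $G$ is a filter on $\bbP$ meeting every dense subset of $\bbP$ lying in $W_r'$, and every set is the $G$-interpretation of some $\bbP$-name belonging to $W_r'$'', which is first-order over $(V,\in)$ with the definable predicate $W_r'$, using that interpretation of names by a filter is a definable operation (the forcing theorem) and that generic filters are sets.

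Condition (1) is then immediate: if $W_r'$ is a ground of $V$ then $W_r = W_r'$ is a ground with $r \in W_r' = W_r$; otherwise $W_r = V$, the trivial ground, and still $r \in W_r$. For condition (2), let $W$ be an arbitrary ground, so $V = W[G_0]$ for some poset $\bbP_0 \in W$ and some $(W,\bbP_0)$-generic $G_0$. Using the hypothesis I would pick an LS cardinal $\ka > \mathrm{rank}(\bbP_0)$; LS cardinals are weakly LS, and by absoluteness of rank we have $\bbP_0 \in W \cap V_\ka = W_\ka$. Lemma~\ref{2.10} then shows that $W$ satisfies the $\ka$-norm covering and $\ka$-norm approximation properties for $V$, so by Corollary~\ref{2.9}(2) there is $r$ with $W_r' = W$. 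Since this $W_r'$ is a ground of $V$, the defining clause fires and $W_r = W_r' = W$, as required.

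This is essentially a bookkeeping argument once Corollary~\ref{2.9} and Lemma~\ref{2.10} are available; the only genuine thing to verify carefully is the first-orderness of the clause ``$W_r'$ is a ground of $V$'', which I expect to be the mildest of obstacles, resting on the forcing theorem together with the fact that proper class many LS cardinals guarantee a weakly LS cardinal above the rank of any prescribed forcing poset.
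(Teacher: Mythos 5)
Your proposal is correct and follows essentially the same route as the paper: define $W_r=W_r'$ when $W_r'$ is a ground and $W_r=V$ otherwise, then use Lemma~\ref{2.10} (applied with an LS cardinal above the rank of the forcing poset) to see that every ground satisfies the norm covering and approximation properties, so that Corollary~\ref{2.9}(2) supplies the parameter $r$. The paper's proof is just a terser statement of this same bookkeeping, leaving implicit the first-order expressibility of ``$W_r'$ is a ground of $V$,'' which you rightly note is routine via the forcing theorem.
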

\begin{proof}
Let $\{W'_r \mid r \in V\}$ be the collection defined in Corollary \ref{2.9}.
Then define $\{W_r \mid  r\in V\}$ as follows:
For a set $r$, if there are some poset $\bbP \in W_r'$ and
an $(W_r', \bbP)$-generic $G$ with $W_r'[G]=V$,
then let $W_r=W_r'$.
If otherwise,  put $W_r=V$.
By Corollary \ref{2.9} and Lemma \ref{2.10},
the collection $\{W_r \mid r \in V\}$ is all grounds of $V$.
\qed\end{proof}

\begin{question}
Suppose there is one supercompact cardinal (or one extendible cardinal).
Are all grounds  uniformly definable as in Theorem \ref{thm1}?
\end{question}

Finally we shall prove that the statement that ``there are proper class many LS cardinals''
is absolute between $V$ and its forcing extensions.
 
\begin{lemma}
Let $\bbP$ be a poset, and $\ka<\la$ cardinals with $\bbP \in V_\ka$.
If $\Vdash_\bbP$``$\ka$ and $\la$ are LS'',
then $\la$ is LS in $V$.
\end{lemma}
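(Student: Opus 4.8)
The strategy is to force with $\bbP$, reflect \emph{inside} $V[G]$ using that $\la$ is an $\mathrm{LS}$ cardinal there, and then push the reflected elementary submodel back down to $V$; the descent is controlled by the smallness of $\bbP$ together with the $\mathrm{LS}$-ness of $\ka$ in $V[G]$, via Lemma~\ref{2.10}.

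\smallskip
\emph{Reductions.} Fix $\gamma<\la\le\alpha$ and $x\in V_\alpha$; we must find $\beta>\alpha$ and $X\prec V_\beta$ with $V_\gamma\subseteq X$, $x\in X$, the transitive collapse of $X$ in $V_\la$, and ${}^{V_\gamma}(X\cap V_\alpha)\subseteq X$. If $\gamma\le\gamma'<\la$ and $\gamma,\gamma'\in X$, then the four clauses for the pair $(\gamma',\alpha)$ imply them for $(\gamma,\alpha)$, since a function $f\colon V_\gamma\to X\cap V_\alpha$ can be padded by $\emptyset$ to one with domain $V_{\gamma'}$. So, enlarging $\gamma$ below $\la$, we may assume $\gamma$ is a limit ordinal with $\mathrm{rank}(\bbP)\cdot\om<\gamma<\la$; in particular $\bbP\in V_\gamma$ and $\bbP,G,x$ have transitive closures below rank $\gamma$. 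Fix once and for all a sufficiently large limit ordinal $\beta_0>\alpha$ with $V_{\beta_0}\models\mathsf{Z}^*$ and $x,\bbP\in V_{\beta_0}$; it suffices to produce $X\prec V_{\beta_0}$ with the four clauses.

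\smallskip
\emph{Forcing and reflection.} Let $G$ be $(V,\bbP)$-generic. In $V[G]$: $\ka$ and $\la$ are $\mathrm{LS}$, hence weakly $\mathrm{LS}$; by Lemma~\ref{2.8+}, $V[G]_\eta=V_\eta[G]$ for every limit $\eta>\om$ with $\bbP\in V_\eta$, in particular for $\eta=\gamma,\la,\beta_0$; and by Lemma~\ref{2.10}, applied to the ground $V$ of $V[G]$, the weakly $\mathrm{LS}$ cardinal $\ka$, and the poset $\bbP\in V_\ka$, the class $V$ has the $\ka$-norm covering and the $\ka$-norm approximation properties for $V[G]$. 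Now work in $V[G]$ and apply the $\mathrm{LS}$-ness of $\la$ with the ordinals $\gamma<\la\le\delta$, where $\delta:=\beta_0+\om$, and the set $p:=\seq{\bbP,G,\ka,\gamma,x,\alpha,\beta_0,V_{\beta_0}^{V}}\in V[G]_\delta$ (after enlarging $\delta$ we may also assume $V[G]_\beta\models\mathsf{Z}^*$ and that $V[G]_\beta$ computes ``$\bbP$-name'' and the norm correctly). This yields $\beta>\delta$ and $X'\prec V[G]_\beta$ with $V[G]_\gamma\subseteq X'$, $p\in X'$, ${}^{V[G]_\gamma}(X'\cap V[G]_\delta)\subseteq X'$, and transitive collapse $\pi\colon X'\to\overline{X'}$ with $\overline{X'}\in V[G]_\la=V_\la[G]$. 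Since $\bbP,G\in V[G]_\gamma\subseteq X'$ and $V[G]_\gamma$ is transitive, $\pi\restriction V[G]_\gamma$ is the identity, so $\pi$ fixes $\bbP$ and $G$; moreover $V_{\beta_0}^{V},\gamma\in X'$ and $V[G]_\beta\models\mathsf{Z}^*$, so a standard argument (compute the truth of ``$V_{\beta_0}^{V}\models\varphi$'' inside $V[G]_\beta$ with the parameter $V_{\beta_0}^{V}\in X'$) gives $X'\cap V_{\beta_0}^{V}\prec V_{\beta_0}^{V}$, and $V_\gamma^{V}=\{y\in V_{\beta_0}^{V}:\mathrm{rank}(y)<\gamma\}\in X'$.

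\smallskip
\emph{Descent to $V$.} Put $X:=X'\cap V_{\beta_0}^{V}$. First I claim $X\in V$: by the $\ka$-norm approximation property of $V$ in $V[G]$ it suffices that $X\cap a\in V$ whenever $a\in V$ and $\norm{a}^{V}<\ka$; for such $a$ (which we may take $\subseteq V_{\beta_0}^{V}$) one fixes a surjection $h\colon V_{\gamma_0}\twoheadrightarrow a$ in $V$ with $\gamma_0<\ka$ and uses that $V_{\gamma_0}^{V}\subseteq V[G]_{\gamma_0}\subseteq X'$, so that --- modulo the ground model --- whether $h(z)\in X'$ is determined by the action of the elementary submodel $X'$ on the small set $V_{\gamma_0}^{V}$; combining this with elementarity of $X'$ and with the $\ka$-norm covering property one concludes $X\cap a\in V$. (This is the step where $\bbP\in V_\gamma$ is essential.) Granting $X\in V$: $V_\gamma\subseteq V[G]_\gamma\cap V_{\beta_0}^{V}\subseteq X$ and $x\in X$ are immediate; the transitive collapse of $X$ computed in $V$ coincides with $\pi\restriction X$ and is a subset of $\overline{X'}\in V_\la[G]$, hence a set of $V$ of rank $<\la$, i.e.\ an element of $V_\la$; finally ${}^{V_\gamma}(X\cap V_\alpha)\subseteq X$ follows from ${}^{V[G]_\gamma}(X'\cap V[G]_\delta)\subseteq X'$: given $f\colon V_\gamma\to X\cap V_\alpha$ in $V$, pad it by $\emptyset$ to $\tilde f\colon V[G]_\gamma\to X'\cap V[G]_\alpha\subseteq X'\cap V[G]_\delta$, so $\tilde f\in X'$, whence $f=\tilde f\restriction V_\gamma^{V}\in X'$ (as $V_\gamma^{V}\in X'$), and also $f\in V_{\beta_0}^{V}$, so $f\in X$.

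\smallskip
\emph{Main obstacle.} The delicate point is the step ``$X\in V$'' in the descent --- equivalently, that the trace $X'\cap V$ of the reflected model on the ground model is itself a set of $V$. This is exactly where the two hypotheses separate: the $\mathrm{LS}$-ness of $\la$ supplies the outer reflection producing $X'$, while the $\mathrm{LS}$-ness of $\ka$ together with $\bbP\in V_\ka$ supplies, through Lemma~\ref{2.10}, the $\ka$-norm covering and approximation properties of $V$ in $V[G]$ needed to recover $X$ inside $V$.
\qed
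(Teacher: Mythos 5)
Your proof follows essentially the same route as the paper's: force with $\bbP$, use Lemma~\ref{2.10} to get the $\ka$-norm covering and approximation properties of $V$ for $V[G]$, reflect inside $V[G]$ via the LS-ness of $\la$ there, and descend by intersecting the reflected submodel with a rank-initial segment of $V$. The descent step you flag as the main obstacle is precisely Lemma~\ref{2.7}, which the paper simply invokes; to make it (or your hand-rolled version of it) go through you should enlarge the closure parameter of the reflected model to be at least $\ka$, not merely above $\mathrm{rank}(\bbP)\cdot\om$ --- this is harmless since $\ka<\la$, and it is exactly where the hypothesis $\ka<\la$ is used.
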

\begin{proof}
Take a set-forcing extension $V[G]$ of $V$ via $\bbP$.
In $V[G]$, 
since $\ka$ is an  LS cardinal and $\bbP \in V_\ka$,
$V$ satisfies the $\ka$-norm covering and approximation properties for $V[G]$
by Lemma \ref{2.10}.
We shall see that $\la$ is LS in $V$.
Take $\gamma<\la \le \alpha$ and $x \in V_\alpha$.
Take a large $\beta$ and $X \prec V[G]_\beta$
such that $V[G]_\gamma \subseteq X$, $X$ contains all relevant objects,
the transitive collapse of $X$ is in $V[G]_\la$,
and ${}^{V[G]_\gamma} (X \cap V[G]_\alpha) \subseteq X$.
Let $Y=X \cap V_{\alpha+1}$. We know $Y \prec V_{\alpha+1}$.
Moreover $Y \in V$  by Lemma \ref{2.7}.
Since the transitive collapse of $X$ is in $V[G]_\la$,
we have that $Y$ is in $V_\la$.
Since $V_\gamma \subseteq V[G]_\gamma \subseteq X$,
we have $V_\gamma \subseteq X \cap V_{\alpha+1}=Y$.
Finally we check that $({}^{V_\gamma} (Y \cap V_\alpha))^V \subseteq Y$.
Take $f:V_\gamma \to Y \cap V_\alpha$ with $f \in V$.
We know $V_\gamma \subseteq V[G]_\gamma$, $V_\alpha \subseteq V[G]_\alpha$, and
${}^{V[G]_\gamma} (X \cap V[G]_\alpha) \subseteq X$,
so $f \in X \cap V_{\alpha+1}=Y$.
\qed\end{proof}

\begin{lemma}[Folklore]
Let $\bbP$ be a poset, and $\alpha>\om$  a limit ordinal 
with $\bbP \in V_\alpha$.
Suppose also that $V_\alpha$ satisfies the $\Sigma_1$-collection scheme.
For every $X \prec V_\alpha$ with $\bbP \in X$ and $\bbP\subseteq X$,
we have $X[G] \prec V[G]_\alpha$.
\end{lemma}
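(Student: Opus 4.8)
The plan is to verify the Tarski--Vaught criterion for $X[G]$ as a substructure of $V[G]_\alpha$. Following the notation around Lemma \ref{2.8+}, for a $\bbP$-name $\dot a$ write $\dot a_G$ for its interpretation and set $X[G]=\{\dot a_G \mid \dot a \in X \text{ is a }\bbP\text{-name}\}$. First I would observe that $X[G] \subseteq V[G]_\alpha$: each $\bbP$-name lying in $X \subseteq V_\alpha$ is a $\bbP$-name of $V_\alpha$, so $X[G] \subseteq V_\alpha[G]$, and $V_\alpha[G]=V[G]_\alpha$ by Lemma \ref{2.8+}. It then suffices to show that whenever $\varphi(v,w_1,\dotsc,w_n)$ is a formula, $b_1,\dotsc,b_n \in X[G]$, and $V[G]_\alpha \models \exists v\,\varphi(v,b_1,\dotsc,b_n)$, some witness already lies in $X[G]$.

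The tool is the forcing apparatus developed \emph{inside $V_\alpha$}. Since $\bbP \in V_\alpha$ and $V_\alpha$ is a model of Zermelo set theory (because $\alpha$ is a limit ordinal above $\om$) together with $\Sigma_1$-collection (by hypothesis), the forcing relation ``$p$ forces $\psi$'', with the quantifiers of $\psi$ relativized to the structure $V[\dot G]_\alpha = V_\alpha[\dot G]$, is uniformly definable over $V_\alpha$ with $\bbP$ as a parameter, and it obeys the truth lemma: for $\bbP$-names $\dot a_1,\dotsc,\dot a_k \in V_\alpha$ and any formula $\psi$, $V[G]_\alpha \models \psi((\dot a_1)_G,\dotsc,(\dot a_k)_G)$ if and only if some $p \in G$ forces $\psi(\dot a_1,\dotsc,\dot a_k)$ over $V_\alpha$. (The truth lemma uses only that $G$ meets every dense subset of $\bbP$ definable over $V_\alpha$; such sets lie in $V$, so a $(V,\bbP)$-generic $G$ suffices.)

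Granting this, the verification is short. Let $b_i=(\dot b_i)_G$ with each $\dot b_i \in X$ a $\bbP$-name and suppose $V[G]_\alpha \models \exists v\,\varphi(v,b_1,\dotsc,b_n)$. Pick a witness $c \in V[G]_\alpha = V_\alpha[G]$ and, by Lemma \ref{2.8+}, write $c=\dot c_G$ for a $\bbP$-name $\dot c \in V_\alpha$. By the truth lemma some $p \in G$ forces $\varphi(\dot c,\dot b_1,\dotsc,\dot b_n)$ over $V_\alpha$; hence $V_\alpha$ satisfies ``there is a $\bbP$-name $\dot d$ with $p \Vdash \varphi(\dot d,\dot b_1,\dotsc,\dot b_n)$'', an assertion whose parameters are just $p$, $\bbP$ and the $\dot b_i$. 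Because $\bbP \in X$, $\bbP \subseteq X$ (so $p \in X$) and $\dot b_i \in X$, elementarity of $X \prec V_\alpha$ yields a $\bbP$-name $\dot c' \in X$ for which $X$ --- and then, by elementarity again applied to this first-order statement with parameters in $X$, also $V_\alpha$ --- satisfies ``$p \Vdash \varphi(\dot c',\dot b_1,\dotsc,\dot b_n)$''. Since $p \in G$, the truth lemma gives $V[G]_\alpha \models \varphi((\dot c')_G,b_1,\dotsc,b_n)$, and $(\dot c')_G \in X[G]$ is the required witness. By the Tarski--Vaught criterion, $X[G] \prec V[G]_\alpha$.

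The one step that genuinely needs care --- and the reason the lemma is ``folklore'' rather than immediate --- is the second paragraph: one must check that the recursion defining the forcing relation on $\bbP$-names, the passage to arbitrary formulas, and the density arguments behind the definability and truth lemmas all go through over a model of $\Z^*$ plus $\Sigma_1$-collection rather than of full $\ZF$, the $\Sigma_1$-collection scheme being exactly what licenses these recursions. Everything afterwards --- the Tarski--Vaught reduction and the two appeals to elementarity --- is routine, and the hypothesis $\bbP \subseteq X$ enters only to ensure that the forcing condition $p$ produced in the argument is a parameter available in $X$.
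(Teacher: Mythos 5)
Your proposal is correct and follows essentially the same route as the paper's proof: reduce to the Tarski--Vaught criterion, invoke the forcing theorem internal to $V_\alpha$ (justified by $\Sigma_1$-collection), and use elementarity of $X \prec V_\alpha$ with the parameters $p$, $\bbP$, $\dot b_i \in X$ to pull a witnessing name into $X$. Your explicit remarks on where $\bbP \subseteq X$ is used and on $V[G]_\alpha = V_\alpha[G]$ match the paper's (sketched) argument.
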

\begin{proof}[Sketch of proof]
If $M$ is a transitive model of $\mathsf{Z}$+$\Sigma_1$-collection scheme,
in $M$ we can define the forcing relation  and
prove the forcing theorem\footnote{Actually Kripke-Platek set-theory is sufficient.}, 
that is, for every poset $\bbP \in M$,
formula $\varphi$, $\bbP$-names $\dot a_0,\dotsc \dot a \in M$,
and $(M, \bbP)$-generic $G$, we have that 
$M[G] \vDash \varphi((\dot a_0)_G, \dotsc, (\dot a_n)_G)$
if and only if $p \Vdash_\bbP \varphi(\dot a_0,\dotsc, \dot a_n)$ holds in $M$ for some $p \in G$.

Now suppose $V_\alpha$ satisfies the $\Sigma_1$-collection scheme.
Note that $V[G]_\alpha=V_\alpha[G]$ by Lemma \ref{2.8}.
To see that $X[G] \prec V[G]_\alpha=V_\alpha[G]$, by Tarski-Vaught criterion,
it is enough to see that for every formula $\varphi$ and $a_0,\dotsc, a_n \in X[G]$,
if $V[G]_\alpha \vDash \exists x \varphi(a_0,\dotsc, a_n, x)$ then
there is $b \in X[G]$ with $V[G]_\alpha \vDash \varphi(a_0,\dotsc, a_n, b)$.
Let $\dot a_0,\dotsc, \dot a_n \in X$ be $\bbP$-names for $a_0,\dotsc, a_n$ respectively.
By the forcing theorem, there is some $p \in G$ and $\bbP$-name $\dot \sigma \in V_\alpha$
such that $p \Vdash_\bbP \,\varphi(\dot a_0,\dotsc, \dot a_n,\dot \sigma )$ in $V_\alpha$.
Hence the statement \hbox{$\exists \dot \sigma( \dot \sigma$ is a $\bbP$-name
and $p \Vdash_\bbP  \varphi(\dot a_0,\dotsc, \dot a_n,\dot \sigma ))$} holds in $V_\alpha$.
Because $X \prec V_\alpha$,
we can find a witness $\dot \tau \in X$.
Then $(\dot \tau)_G \in X[G]$ and
$V[G]_\alpha \vDash \varphi((\dot a_0)_G,\dotsc, (\dot a_n)_G,(\dot \tau)_G )$,
as required.
\qed\end{proof}


\begin{lemma}\label{2.13}
Let $\ka$ be a cardinal limit of LS cardinals (hence $\ka$ itself is an LS cardinal)
and $\bbP \in V_\ka$ be a poset.
Let $G$ be $(V, \bbP)$-generic.
Then $\ka$ is LS in $V[G]$.
\end{lemma}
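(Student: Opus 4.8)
The plan is to verify the L\"owenheim--Skolem property of $\kappa$ in $V[G]$ directly, by pushing a witnessing elementary submodel from $V$ down to $V[G]$ along the generic filter, using the forcing--absoluteness lemma for elementary submodels proved just above (the ``folklore'' lemma that $X[G]\prec V[G]_\alpha$ whenever $\alpha$ is a suitable limit ordinal, $X\prec V_\alpha$, $\mathbb P\in X$ and $\mathbb P\subseteq X$). Fix $\gamma<\kappa\le\alpha$ and $x\in V[G]_\alpha$; I must find $\beta>\alpha$ and $X\prec V[G]_\beta$ satisfying the four clauses of the definition of an LS cardinal.

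\smallskip\noindent\emph{Setup.} First choose an additively indecomposable ordinal $\gamma_1$ with $\max(\gamma,\mathrm{rank}(\mathbb P))<\gamma_1<\kappa$, so $\mathbb P\in V_{\gamma_1}$ and $\mathbb P\subseteq V_{\gamma_1}$. Next choose an additively indecomposable limit ordinal $\delta>\alpha$, large enough, with $V_\delta\models\mathsf Z^*+\Sigma_1$-collection; such $\delta$ are cofinal in the ordinals by the usual reflection argument. By Lemma~\ref{2.8+}, $x$ has a $\mathbb P$-name $\dot x\in V_\delta$. Since $\kappa$ is a limit of LS cardinals it is itself an LS cardinal in $V$; applying the LS property of $\kappa$ in $V$ with the pair $\gamma_1<\kappa\le\delta$ and the parameters $\dot x,\mathbb P,\alpha,\gamma,\kappa$ yields $\beta>\delta$ and $Y\prec V_\beta$ with $V_{\gamma_1}\subseteq Y$, the parameters in $Y$, $\mathrm{trcl}(Y)\in V_\kappa$, and ${}^{V_{\gamma_1}}(Y\cap V_\delta)\subseteq Y$. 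Put $Z=Y\cap V_\delta$. As $\delta\in Y$ we have $Z\prec V_\delta$, and moreover $V_{\gamma_1}\subseteq Z$, $\mathbb P,\dot x,\alpha,\gamma\in Z$, $\mathbb P\subseteq Z$, $\mathrm{trcl}(Z)\in V_\kappa$, and --- using indecomposability of $\delta$ --- $Z$ absorbs every $V$-function from $V_{\gamma_1}$ into $Z$ whose range has bounded rank. By the folklore lemma, $Z[G]\prec V[G]_\delta$. I take $\beta:=\delta$ and $X:=Z[G]$.

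\smallskip\noindent\emph{Clauses (1)--(3).} (1): every element of $V[G]_{\gamma_1}=V_{\gamma_1}[G]$ (Lemma~\ref{2.8+}) is $\dot y_G$ for a $\mathbb P$-name $\dot y\in V_{\gamma_1}\subseteq Z$, hence lies in $Z[G]$; and $V[G]_\gamma\subseteq V[G]_{\gamma_1}$. (2): $x=\dot x_G\in Z[G]$ since $\dot x\in Z$. (3): the transitive collapse $\sigma$ of $Z$ belongs to $V_\kappa$ and fixes $V_{\gamma_1}$ pointwise --- in particular it fixes $\mathbb P$ and every condition, so the image of $G$ under the natural extension is $G$ itself --- whence $\sigma$ extends to an $\in$-isomorphism of $Z[G]$ onto $\bar Z[G]$, where $\bar Z:=\sigma[Z]\in V_\kappa$; since $\mathrm{rank}(\bar Z[G])\le\mathrm{rank}(\bar Z)<\kappa$ and $\bar Z[G]$ is transitive, the transitive collapse of $X$ is $\bar Z[G]\in V[G]_\kappa$.

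\smallskip\noindent\emph{Clause (4), the crux.} We must show ${}^{V[G]_\gamma}(X\cap V[G]_\alpha)\subseteq X$. Let $f\colon V[G]_\gamma\to Z[G]\cap V[G]_\alpha$ be in $V[G]$; then $\mathrm{rank}(f)<\alpha+\omega<\delta$, so $f=\dot f_G$ for some $\mathbb P$-name $\dot f\in V_\delta$. The first step is that, because $Z\prec V_\delta$ and the forcing relation of $\mathbb P$ is absolute between $V$ and $V_\delta$, one has $Z[G]\cap V=Z$ and, more generally, every $v\in Z[G]$ of rank $<\eta$ (for $\eta\in Z$) has a $\mathbb P$-name in $Z$ of rank $<\eta+\omega$; so each value $f(y)$ has a bounded-rank name in $Z$. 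One then reindexes $f$ along the canonical surjection from the $\mathbb P$-names in $V_{\gamma_1}$ onto $V[G]_{\gamma_1}$, assembles a $\mathbb P$-name for $f$ out of the value-names, and concludes from the absorption property of $Z$ (together with the rank bounds) that this name --- and hence $f$ --- lies in $Z\subseteq X$. The delicate step, which I expect to be the main obstacle, is performing the assembly without the axiom of choice: the naive name for $f$ is built from $\dot f\in V_\delta\setminus Z$, so one must select the value-names canonically --- or transport the whole argument to the transitive collapse $\bar Z\in V_\kappa$, which is a genuine, small set --- so that the resulting name is definable over $V_\delta$ from parameters already in $Z$, and is therefore absorbed by $Z$.
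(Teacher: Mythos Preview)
Your setup and clauses (1)--(3) are fine and match the paper's shape. The gap is clause (4), and it is a real one. Notice that your argument never uses the hypothesis that $\kappa$ is a \emph{limit} of LS cardinals---you only use that $\kappa$ itself is LS. That hypothesis is not decorative; it is exactly what the paper spends in the proof of (4).

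Your plan is to assemble, inside $Z$, a $\mathbb P$-name for $f$ by choosing for each $y\in V[G]_\gamma$ a bounded-rank name in $Z$ for $f(y)$, and then to invoke the $V_{\gamma_1}$-closure of $Z$. As you already suspect, this selection cannot be made canonical in $\ZF$: the only object linking $y$ to $f(y)$ is the external name $\dot f\notin Z$, and neither passing to the transitive collapse $\bar Z$ nor restricting to names of minimal rank gives a choice-free selection (there is no definable well-order of $V_\eta$ to break ties). So the ``assembly'' step does not go through.

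The paper avoids the selection problem entirely by using a \emph{second} elementary hull, and this is where the extra LS cardinal is spent. In the paper's notation: one first fixes an LS cardinal $\delta$ with $\gamma<\delta<\kappa$ and takes the witnessing model $Y\prec V_\beta$ with $V_\delta\subseteq Y$ and ${}^{V_\delta}(Y\cap V_\alpha)\subseteq Y$ (so closure under $V_\delta$-sequences, not merely $V_{\gamma_1}$-sequences). To show $f\in Y[G]$, it suffices to find $y\in Y[G]$ with $\mathrm{range}(f)\subseteq y$ and $\norm{y}^{V[G]}<\delta$, since then $\mathcal P(V[G]_\gamma\times y)\subseteq Y[G]$ by elementarity. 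To produce $y$, apply the LS property of $\delta$ in $V$ to obtain $Z\prec V_\beta$ with $V_\gamma\subseteq Z$, $Y,\dot f\in Z$, and collapse in $V_\delta$. Now the set
\[
R=\{\dot a\in Z\cap Y:\exists p\in\mathbb P\ \exists\dot b\in V_\gamma\ (p\Vdash_\bbP \dot f(\dot b)=\dot a)\}
\]
is defined from $\dot f$ and $Z$ without any choices; one checks $\mathrm{range}(f)\subseteq\{\dot a_G:\dot a\in R\}$ by reflecting witnesses into $Z$. Since $R\subseteq Y\cap V_\alpha$ and $\norm{R}\le\norm{Z}<\delta$, the $V_\delta$-closure of $Y$ gives $R\in Y$, and $y=\{\dot a_G:\dot a\in R\}\in Y[G]$ has $\norm{y}^{V[G]}<\delta$. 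The point is that the second hull $Z$ \emph{contains} $\dot f$, so elementarity replaces the choice of value-names; and the stronger closure ($V_\delta$ rather than $V_{\gamma_1}$) is what lets the resulting small set $R$ land back in $Y$.
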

\begin{proof}
In $V[G]$, fix an ordinal $\alpha>\ka$, $x \in V[G]_\alpha$, and $\gamma<\ka$.
We will find some $\beta>\alpha$ and $X \prec V[G]_{\beta}$ such that
$V[G]_\gamma \subseteq X$, $x \in X$, the transitive collapse of $X$ is in $V[G]_\ka$, and
${}^{V[G]_\gamma} (X \cap V[G]_\alpha) \subseteq X$.
Let $\dot x$ be a name for $x$.

In $V$, take a limit $\beta>\alpha$
such that $V_\beta$ satisfies the $\Sigma_1$-collection scheme.
Take an LS cardinal $\delta<\ka$
with $\gamma<\delta$,
and a submodel $Y \prec V_{\beta}$ such that
$V_\delta \subseteq Y$,
$\dot x \in Y$,
the transitive collapse of $Y$ is in $V_\ka$, and
${}^{V_\delta} (Y \cap V_\alpha) \subseteq Y$.
We may assume that $\bbP \subseteq Y$.
We will show that $Y[G]$ is as required.

We have $x \in Y[G] \prec V_\beta[G] =V[G]_\beta$ and
$V[G]_\gamma \subseteq V[G]_\delta = V_\delta[G] \subseteq Y[G]$.
To show that ${}^{ V[G]_\gamma } (Y[G] \cap V[G]_\alpha) \subseteq Y[G]$,
take $f:V[G]_\gamma \to Y[G] \cap V[G]_\alpha$.
We will find $y \in Y[G]$ with
$\mathrm{range}(f) \subseteq y$ and $\norm{y}^{V[G]} < \delta$.
Then we will have $f \in Y[G]$ since $\p(y) \subseteq Y[G]$.

Let $\dot f$ be a $\bbP$-name for $f$.
In $V$, since $\delta$ is LS,
there is $Z \prec V_{\beta}$
such that $V_\gamma \subseteq Z$,
$Y, \dot f,\dotsc  \in Z$ and
the transitive collapse of $Z$ is in $V_\delta$.
Let $R=\{\dot a \in Z \cap Y \mid  \exists p \in \bbP
\exists \dot b \in V_\gamma \,(
p \Vdash_\bbP$``$\dot f (\dot b)=\dot a) \}$.
\begin{claim}
$\mathrm{range}(f) \subseteq \{\dot a_G \mid  \dot a \in R\}$.
\end{claim}
\begin{proof}[Proof of Claim]
Take $a \in \mathrm{range}(f)$.
Then there are $\bbP$-names $\dot b \in V_\gamma$ and $\dot a \in Y \cap V_\alpha$
such that $f(\dot b_G)=a=\dot a_G$.
Take $p \in \bbP$ with $p \Vdash_\bbP$``$\dot f(\dot b)=\dot a$''.
Then the statement \hbox{$\exists \dot a' \in Y   
(p \Vdash_\bbP$``$\dot f(\dot b)=\dot a$'')}  holds in $V_\beta$.
Because $\bbP \subseteq V_\gamma \subseteq Z$ and $V_\gamma, Y \in Z$,
we can find $\dot a' \in Z \cap Y$
with $p \Vdash_\bbP$``$\dot f(\dot b)=\dot a'$'',
then $\dot a' \in R$ and $a=\dot a'_G$.
\qed\end{proof}

Now $R \subseteq Y$ and $\norm{R} \le \norm{Z}<\delta$.
Since ${}^{V_\delta} (Y \cap V_\alpha) \subseteq Y$,
we have $R \in Y$.
Let $y=\{\dot a_G \mid  \dot a \in R\} \in Y[G]$.
We have $\mathrm{range}(f) \subseteq y$,
and, since $\norm{R}^V <\delta$, we know $\norm{y}^{V[G]}<\delta$ as well.
\qed\end{proof}
\begin{corollary}\label{abs of LS}
Let $V[G]$ be a generic extension of $V$.
Then the statement that ``there are proper class many LS cardinals''
is absolute between $V$ and $V[G]$.
\end{corollary}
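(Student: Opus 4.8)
The plan is to prove the two directions of the equivalence separately, using Lemma \ref{2.13} for the easy direction and a homogeneity/collapsing argument for the converse. Fix a poset $\bbP$ such that $V[G]$ is a $(V,\bbP)$-generic extension, and fix a limit ordinal $\mu > \mathrm{rank}(\bbP)\cdot \om$, so that $\bbP \in V_\mu$. I would begin with the elementary remark that any model of $\ZF$ with proper class many LS cardinals in fact has proper class many cardinals that are \emph{limits} of LS cardinals: given an ordinal $\theta$, take an increasing $\om$-sequence of LS cardinals above $\theta$ and pass to its supremum, which is a limit of LS cardinals and hence, as noted just after the definition of LS cardinal, itself an LS cardinal.

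The direction from $V$ to $V[G]$ is then immediate from Lemma \ref{2.13}: given $\theta$, choose a cardinal $\ka > \max(\theta,\mu)$ that is a limit of LS cardinals in $V$; since $\bbP \in V_\mu \subseteq V_\ka$, Lemma \ref{2.13} gives that $\ka$ is LS in $V[G]$, and as $\theta$ was arbitrary there are proper class many LS cardinals in $V[G]$.

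For the converse, the point --- and the main obstacle --- is that $\bbP$ need not be weakly homogeneous, so one cannot directly feed $V[G]$ into the lemma stating that if $\Vdash_\bbP$``$\ka$ and $\la$ are LS'' then $\la$ is LS in $V$: merely knowing that some $\ka$ is LS in the particular extension $V[G]$ does not show it is \emph{forced} to be LS. The remedy is to absorb $\bbP$ into a weakly homogeneous collapse. Suppose $V[G]$ has proper class many LS cardinals, fix $\theta$, and choose a limit ordinal $\alpha > \max(\theta, \mathrm{rank}(\bbP)\cdot \om)$. Force over $V[G]$ with $\col(V[G]_\alpha)$ to obtain $H$, and apply Theorem \ref{2.4.2} to get a $(V,\col(V_\alpha))$-generic $H'$ with $V[G][H] = V[H']$. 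Applying the first direction (that is, Lemma \ref{2.13}) \emph{inside} $V[G]$ --- using that $V[G]$ has proper class many limits of LS cardinals and that $\col(V[G]_\alpha)$ has rank below $\alpha+\om$ --- I conclude that $V[G][H] = V[H']$ also has proper class many LS cardinals.

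Finally I would descend from $V[H']$ to $V$. Since $\col(V_\alpha)$ is weakly homogeneous, Theorem \ref{2.4.1} shows that for each cardinal $\ka$ the assertion ``$\ka$ is an LS cardinal'' is decided by $\col(V_\alpha)$ over $V$; as it holds in $V[H']$, it is forced. Now pick LS cardinals $\ka < \la$ of $V[H']$ with $\alpha < \ka$ (so in particular $\la > \theta$); these are cardinals of $V$, and $\col(V_\alpha) \in V_{\alpha+\om} \subseteq V_\ka$ because $\ka$ is an uncountable cardinal above $\alpha$. Then $\Vdash_{\col(V_\alpha)}$``$\ka$ and $\la$ are LS'' in $V$, so by the lemma above $\la$ is LS in $V$. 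Since $\theta$ was arbitrary and $\la > \theta$, there are proper class many LS cardinals in $V$, which completes the proof. The only steps requiring care are the routine rank bookkeeping (that $\col(V[G]_\alpha)$ has rank $< \alpha+\om$ and $\col(V_\alpha) \in V_{\alpha+\om}$) and confirming that ``$\ka$ is an LS cardinal'' is genuinely a first-order property of the parameter $\ka$, so that Theorem \ref{2.4.1} applies.
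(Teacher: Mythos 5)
Your proof is correct. The forward direction is exactly the intended application of Lemma \ref{2.13} (together with the observation that limits of LS cardinals are LS and form a proper class). For the converse you take a more roundabout route than is strictly necessary: reading the preceding lemma literally --- its hypothesis is that $\Vdash_{\bbP}$``$\ka$ and $\la$ are LS'', which a single generic extension does not obviously certify when $\bbP$ is inhomogeneous --- you absorb $G$ into a $\col(V_\alpha)$-extension via Theorem \ref{2.4.2}, push the LS cardinals of $V[G]$ up to $V[G][H]=V[H']$ by applying Lemma \ref{2.13} over $V[G]$, and then use the weak homogeneity of $\col(V_\alpha)$ (Theorem \ref{2.4.1}) to upgrade ``$\ka$ and $\la$ are LS in $V[H']$'' to the forced statement over $V$. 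This is sound, and the bookkeeping you flag (that $\col(V_\alpha)\in V_{\alpha+1}\subseteq V_\ka$ for any cardinal $\ka>\alpha$, that uncountable cardinals of $V[H']$ are cardinals of $V$, and that ``$\ka$ is LS'' is first order in $\ka$) all checks out. Note, however, that the detour is avoidable: the paper's proof of that lemma never uses the full forcing hypothesis --- it opens by fixing one generic extension $V[G]$ and only uses that $\ka$ and $\la$ are LS there, intersecting the witnessing $X\prec V[G]_\beta$ with $V_{\alpha+1}$ via Lemma \ref{2.7}. So what is actually proved is: if $\ka<\la$ are cardinals with $\bbP\in V_\ka$ and both are LS in some single extension $V[G]$, then $\la$ is LS in $V$; with that reading the converse direction of the corollary is immediate from choosing, for any $\theta$, LS cardinals $\ka<\la$ of $V[G]$ with $\mathrm{rank}(\bbP)<\ka$ and $\theta<\la$. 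Your homogeneity argument buys nothing extra here, but it is a legitimate fallback if one insists on the lemma exactly as stated.
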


We say that all grounds are \emph{uniformly definable in the generic multiverse}
if there is a first-order formula $\varphi(x,y)$ of set-theory
such that, in all grounds and generic extensions of $V$,
all its grounds are uniformly definable by $\varphi$ as in Theorem \ref{thm1}
\footnote{Of course this definition cannot be formalized within $\ZF$, so 
we will use it informally.}.

By Corollary \ref{abs of LS} and the proofs of Corollaries \ref{2.9} and \ref{2.11}, we have:
\begin{corollary}
Suppose there are proper class many LS cardinals.
Then all grounds are uniformly definable in the generic multiverse.
\end{corollary}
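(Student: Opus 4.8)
The plan is to show that the single formula $\varphi(x,y)$ constructed in Corollary \ref{2.11} already witnesses uniform definability of grounds throughout the generic multiverse, once we establish that \emph{every} model occurring in the generic multiverse of $V$ satisfies the hypothesis ``there are proper class many LS cardinals.'' The crucial observation is that the definitions of $\varphi'$ in Corollary \ref{2.9} and of $\varphi$ in Corollary \ref{2.11} mention nothing beyond the primitive notions of set theory (LS cardinals, the norm covering and approximation properties, and the existence of a suitable generic filter $G$ with $W_r'[G]$ equal to the ambient universe); hence $\varphi$ is literally the same syntactic formula in every model of $\ZF$, and Corollary \ref{2.11} guarantees that, interpreted in any model with proper class many LS cardinals, it enumerates exactly the grounds of that model.

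First I would recall that the generic multiverse of $V$ is, by definition, the smallest collection of transitive models of $\ZF$ containing $V$ and closed under passing to set-forcing extensions and to grounds; consequently every model $M$ in it is reached from $V$ by a \emph{finite} chain of steps, each step being either ``take a set-forcing extension'' or ``take a ground'' (using Theorem \ref{universality} and the remark following it, that a ground of a ground is again a ground). I would then prove by induction along such a chain that every $M$ in the generic multiverse has proper class many LS cardinals. The base case is the hypothesis on $V$. If $N$ is a set-forcing extension of a model $M$ already known to satisfy the hypothesis, then $N$ satisfies it by Corollary \ref{abs of LS}. If instead $N$ is a ground of such an $M$, then $M$ is a set-forcing extension of $N$, so applying Corollary \ref{abs of LS} with $N$ in the role of the base model again yields that $N$ satisfies the hypothesis. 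Hence the hypothesis propagates to every model of the generic multiverse.

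Finally, fix an arbitrary model $M$ in the generic multiverse and apply Corollary \ref{2.11} internally to $M$ — legitimate since $M \models$ ``there are proper class many LS cardinals'' and $\varphi$ is a fixed formula not referring to $V$. This gives that for every $r \in M$ the class $\{x \mid \varphi(x,r)\}^{M}$ is a ground of $M$ containing $r$, and that every ground of $M$ arises in this way. Since $M$ was arbitrary and $\varphi$ does not depend on $M$, this is exactly the assertion that all grounds are uniformly definable by $\varphi$ in the generic multiverse, in the informal sense explained in the footnote. I do not expect a genuine obstacle here: the only real content — the two-directional absoluteness of ``proper class many LS cardinals'' under set forcing — has already been supplied by Lemma \ref{2.13} and Corollary \ref{abs of LS}, and what remains is the bookkeeping of the multiverse induction together with the routine check that $\varphi$ relativizes correctly to each model.
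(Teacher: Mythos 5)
Your proposal is correct and follows essentially the same route as the paper, which likewise derives the result from the two-directional absoluteness of ``there are proper class many LS cardinals'' (Corollary \ref{abs of LS}) together with the fact that the fixed formula of Corollaries \ref{2.9} and \ref{2.11} defines the grounds internally in any model of $\ZF$ satisfying that hypothesis. Your explicit induction along finite chains of grounds and extensions just spells out what the paper leaves implicit.
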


If $\AC$ is forceable,
then there are proper class many LS cardinals in some generic extension of $V$,
and hence also in $V$ by Corollary \ref{abs of LS}.
Hence we also have:
\begin{corollary}\label{6.2}
Suppose $\AC$ is forceable.
Then there are proper class many LS cardinals,
and all grounds are uniformly definable in the generic multiverse.
\end{corollary}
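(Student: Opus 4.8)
The plan is to chain together the two facts already established: Corollary \ref{abs of LS}, which gives absoluteness of ``there are proper class many LS cardinals'' between $V$ and any generic extension $V[G]$, together with the observation (recorded earlier, as the consequence of the \Lowenheim-Skolem theorem in $\ZFC$) that any model of $\ZFC$ has proper class many LS cardinals. So, assuming $\AC$ is forceable, fix a poset $\bbP$ with $\Vdash_\bbP \AC$ and a $(V,\bbP)$-generic $G$. In $V[G]$ the axiom of choice holds, hence $V[G]$ has proper class many LS cardinals (in $\ZFC$, $\ka$ is LS iff $\ka = \beth_\ka$, and there are proper class many such $\ka$). Applying Corollary \ref{abs of LS} to the extension $V[G]$ of $V$, we conclude that $V$ itself has proper class many LS cardinals. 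This is exactly the hypothesis of Corollary \ref{2.11}, which yields a first-order formula uniformly defining all grounds of $V$; and by Corollary \ref{abs of LS} plus the proofs of Corollaries \ref{2.9} and \ref{2.11}, the same formula works uniformly across the generic multiverse.

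More concretely, I would write: since $\AC$ is forceable, pick $\bbP$ and $(V,\bbP)$-generic $G$ with $V[G] \models \ZFC$. In $\ZFC$, an uncountable cardinal $\ka$ is LS iff $\ka=\beth_\ka$, so $V[G]$ has proper class many LS cardinals. By Corollary \ref{abs of LS}, the statement ``there are proper class many LS cardinals'' is absolute between $V$ and $V[G]$; hence $V$ has proper class many LS cardinals. Now the conclusion is immediate from the preceding corollary (the one stating that proper class many LS cardinals implies uniform definability of grounds in the generic multiverse).

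There is essentially no real obstacle here; the statement is a packaging corollary of machinery already built. The only point requiring a sentence of care is the direction in which absoluteness is applied: Corollary \ref{abs of LS} is stated symmetrically (``absolute between $V$ and $V[G]$''), so we may use the downward direction --- from $V[G]$, which satisfies the hypothesis, to $V$ --- without any extra work. One should also note that the existence of a choice-forcing poset guarantees we can take $G$ in some generic extension (we argue about $V[G]$ externally, or equivalently pass to a Boolean-valued model / forcing-theoretic statement), but this is the standard way such ``forceable'' hypotheses are used and needs no comment beyond what the paper already assumes about the forcing method over $\ZF$. Thus the proof is a two-line deduction, and I would present it as such, citing Corollary \ref{abs of LS} and the immediately preceding corollary.
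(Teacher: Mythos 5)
Your argument is correct and is essentially the paper's own proof: force $\AC$ to get a generic extension satisfying $\ZFC$, which therefore has proper class many LS cardinals, pull this back to $V$ via the absoluteness corollary, and then invoke the preceding corollary on uniform definability in the generic multiverse. No gaps; this matches the paper's two-line deduction exactly.
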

In the next section, we discuss when $\AC$ is forceable.


By Corollary \ref{6.2},
we can easily construct a model $V$
such that $V$ does not satisfy $\AC$ but
$V$ has proper class many LS cardinals;
For instance, 
$\AC$ is forceable over $L(\bbR)$, so $L(\bbR)$ has proper class many LS cardinals.
On the other hand it is possible that $L(\bbR)$ does not satisfy $\AC$
(e.g., see Theorem \ref{millar} below).
However the author does not know if the converse direction of Corollary \ref{6.2} fails:
\begin{question}
Is it consistent that there are proper class many LS cardinals
but $\AC$ is never forceable?
\end{question}
This question might be connected with
 Woodin's Axiom of Choice Conjecture:
\begin{conj}[Axiom of Choice Conjecture, Woodin, Definition 231 in \cite{Woodin}]
If $V$ has a large cardinal, e.g., extendible cardinal,
then $\AC$ is forceable.
\end{conj}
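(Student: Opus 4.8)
Since this is a conjecture of Woodin, I can only outline a plausible line of attack using the machinery above. The plan is to go through Theorem~\ref{thm6}: it suffices to exhibit a transitive model $W$ of $\ZFC$ that is definable in $V$ with parameters from $W$, together with a set $X$, such that $V=W(X)$. The obvious candidate is $W=\HOD$, which is definable in $V$ without parameters (hence a fortiori with parameters from $\HOD$) and satisfies $\ZFC$ even in $\ZF$, since the ordinal-definability ordering well-orders $\HOD$ in a set-like fashion. So the conjecture reduces to the following: \emph{if $V$ carries an extendible cardinal $\delta$, then $V=\HOD(X)$ for some set $X$} --- that is, $V$ is generated over its $\HOD$ by a single set. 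Note that such an $X$ would also, via Corollary~\ref{abs of LS} and Theorem~\ref{thm1}, make all grounds uniformly definable in the generic multiverse.

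To prove this key step I would try to adapt Woodin's universality arguments for $\HOD$ under an extendible cardinal to the choiceless setting. Fix a set $X$ coding the structure of $V$ below $\delta$, for instance $V_\delta$ together with enough witnessing data. Using, for each ordinal $\alpha$, the extendibility embedding $j\colon V_\beta\to N$ with $\crit(j)=\delta$, $\alpha<j(\delta)$, and ${}^{V_\alpha}N\subseteq N$, the idea is to reflect statements about arbitrarily high $V_\alpha$'s down below $\delta$, where they become ordinal-definable from $X$, and then transfer the resulting definitions back up along $j$, thereby placing each $V_\alpha$ inside $\HOD(X)$. The purpose of the closure condition ${}^{V_\alpha}N\subseteq N$ is, exactly as in Lemmas~\ref{2.7} and~\ref{2.13}, to absorb functions out of a small $V_\gamma$ into the reflected structures, so that the reflected definitions remain honest across the embedding. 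If this succeeds one gets $V=\HOD(X)$, and Theorem~\ref{thm6} then yields the forceability of $\AC$.

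The main obstacle is precisely this key step. It is the $\ZF$-analogue of Woodin's $\HOD$ Conjecture, which is open already in $\ZFC$, and at present there is no technique that, from an extendible cardinal alone, rules out that $V$ is ``far'' from its $\HOD$ --- for example that $V$ contains, for unboundedly many $\alpha$, pathological objects that no single set $X$ can code back over $\HOD$ (long sequences of distinct subsets of $\delta$ admitting no definable enumeration, say), which the extendibility embeddings, acting on the $V_\beta$'s, do not obviously exclude. A more modest first target, suggested by the paper, is to decide whether an extendible cardinal already gives proper class many $\mathrm{LS}$ cardinals; by Corollary~\ref{abs of LS} this is a forcing-absolute hypothesis under which all grounds are uniformly definable (Theorem~\ref{thm1}), so one could then search for a $\ZFC$ ground among the definable classes $W_r$. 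But even that would still require proving that \emph{some} such ground exists and that $V$ is a set-generated extension of it, which is not known to follow from ``proper class many $\mathrm{LS}$ cardinals''; so this route does not circumvent the central difficulty, and I expect a genuine solution to need a new idea about how extendibility bounds the failure of choice.
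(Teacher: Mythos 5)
The statement you were asked about is not a theorem of the paper at all: it is Woodin's \emph{Axiom of Choice Conjecture}, stated as an open conjecture, and the paper offers no proof of it (nor does it claim one). So there is nothing in the paper to compare your argument against, and your proposal --- as you yourself say --- is a research sketch rather than a proof. The honest verdict is that the entire content of the statement remains a gap: no step of your outline actually establishes anything beyond what Theorem \ref{thm6} already gives.

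That said, your reduction is sound as far as it goes. $\HOD$ is a parameter-free definable transitive class model of $\ZFC$ even over $\ZF$ (the definable well-ordering of $\mathrm{OD}$ survives), so by Theorem \ref{thm6} it would indeed suffice to show that an extendible cardinal implies $V=\HOD(X)$ for some set $X$; this is exactly the shape of Woodin's own program, and it is the right place to locate the difficulty. But be careful about two points. First, the reflection-and-transfer argument you gesture at does not obviously produce a \emph{single} set $X$ generating all of $V$ over $\HOD$: the embeddings $j\colon V_\beta\to N$ give you local information at each level, and without $\AC$ there is no mechanism for coherently amalgamating the witnesses across unboundedly many $\alpha$ into one set --- this is precisely where the argument would have to do something new, not merely imitate Lemmas \ref{2.7} and \ref{2.13}. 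Second, your fallback target (extendible implies proper class many LS cardinals) is itself listed as an open question in the paper, and even granting it, Corollary \ref{2.9} only gives uniform definability of grounds; it does not produce a $\ZFC$ ground, let alone one over which $V$ is generated by a set. So neither route closes the gap, and you are right that a genuinely new idea about how large cardinals constrain the failure of choice is needed.
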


We know some notable models of $\ZF$ in which $\AC$ is never forceable, for instance:
\begin{enumerate}
\item A model of $\ZF$ which has no regular uncountable cardinals (Gitik \cite{Gitik}).
\item A model of $\ZF$ which has  proper class many infinite but Dedekind-finite sets (Monro \cite{Monro}).
\item A model of $\ZF$ in which Fodor's lemma fails everywhere and every club filter is not $\sigma$-complete (Karagila \cite{Karagila2}).
\item The Bristrol model $M$, a transitive model of $\ZF$ which
lies between $L$ and the Cohen forcing extension $L[c]$,
definable in $L[c]$ (Karagila \cite{Karagila}).
\end{enumerate}
Daisuke Ikegami pointed out that  Chang's model $L(\mathrm{ON}^\om)$ is also an example.
Kunen \cite{Kunen1} showed that, in $\ZFC$, $\AC$ fails in $L(\mathrm{ON}^\om)$ if there 
are uncountably many measurable cardinals.
\begin{enumerate}
\item[5.] 
Chang's model $L(\mathrm{ON}^\om)$ assuming that there are proper class many measurable cardinals.
\end{enumerate}
If there are proper class many measurable cardinals,
we can check that $\AC$ is not forcesable over $L(\mathrm{ON}^\om)$ by a similar argument used in \cite{Kunen1}.

\begin{question}
Do these models have proper class many LS cardinals?
\end{question}
As stated before, models (1) and (3) have no LS cardinals.

\begin{question}
What does the geology of these models looks like?
For instance, are all grounds uniformly definable in these models?
\end{question}
We know few things about the geology of these models.

\section{The mantle and the generic mantle}
In this section we briefly discuss the mantle and the generic mantle of the universe.

\begin{definition}
Suppose all grounds are uniformly definable as in Theorem \ref{thm1}.
The \emph{mantle} $\mathbb{M}$ is the intersection of all grounds,
 that is, $\mathbb{M}=\bigcap_{r} W_r$.
\end{definition}
The mantle is a parameter-free definable transitive class containing all ordinals.
In $\ZFC$, the intersection of all grounds satisfying $\AC$ is a model of $\ZFC$
(\cite{FHR}, \cite{Usuba}), so a natural and important question is:
\question
Is the mantle a model of $\ZF$ or $\ZFC$?

%
%

Note that if all grounds of $V$ are downward directed,
that is, every two grounds of $V$ have a common ground,
then we can prove that the mantle is a model of $\ZF$
as in the context of $\ZFC$ (see \cite{FHR}).
In  the $\ZFC$-context, it is known that all grounds are downward directed (see Theorem \ref{4.1} below).
However, in the $\ZF$-context, this downward directedness can fail.
Now let us sketch the proof.

For sets $X$ and $Y$,
let $Fn(X, Y)$ be the poset consisting of all finite partial functions from
$X$ into $Y$ with  the reverse inclusion order.
The following is known, e.g., 
see Exercise E in Chapter VII in Kunen \cite{Kunen}:
\begin{theorem}[Millar]\label{millar}
Suppose $V=L$.
Let $G$ be $(V,Fn(\om_1, 2))$-generic.
Then, in $V[G]$,
$L(\bbR^{V[G]})$ does not satisfy $\AC$.
\end{theorem}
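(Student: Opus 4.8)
The plan is to exploit the homogeneity of Cohen forcing together with the standard fact that in $L(\bbR)$ every set is definable from a real and finitely many ordinals. First I would observe that $Fn(\om_1,2)$ is, up to isomorphism, the forcing adding $\om_1$ Cohen reals: fix a partition $\om_1=\bigsqcup_{\gamma<\om_1}I_\gamma$ into blocks of order type $\om$, write $I_\gamma(n)$ for the $n$-th element of $I_\gamma$, and for the generic $G$ put $g=\bigcup G$ and $c_\gamma(n)=g(I_\gamma(n))$; then $\seq{c_\gamma : \gamma<\om_1}$ are mutually Cohen-generic over $V$, $Fn(\om_1,2)$ is ccc, and $V[G]=V[\seq{c_\gamma : \gamma<\om_1}]$. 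Each permutation $\sigma$ of the index set $\om_1$ induces an automorphism $\pi_\sigma$ of $Fn(\om_1,2)$ (acting on the blocks by the corresponding order isomorphisms) with $\pi_\sigma(\dot c_\gamma)=\dot c_{\sigma(\gamma)}$, and $\pi_\sigma$ fixes pointwise every coordinate lying in a block fixed by $\sigma$. Call the finite set $\{\gamma<\om_1 : \dom(p)\cap I_\gamma\ne\emptyset\}$ the \emph{support} of a condition $p$.

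Now suppose toward a contradiction that $L(\bbR^{V[G]})\models\AC$. Then some well-order $<^*$ of $\bbR^{V[G]}$ lies in $L(\bbR^{V[G]})$, and by the quoted definability fact $<^*$ is defined there by a formula $\varphi(x,y,r,\bar\alpha)$ for a real $r\in\bbR^{V[G]}$ and ordinals $\bar\alpha$. By ccc and a nice-name argument, $r$ has a name $\dot r$ all of whose conditions have support contained in a fixed countable set $s\subseteq\om_1$; thus any $\pi_\sigma$ with $\sigma\restriction s=\mathrm{id}$ fixes $\dot r$. Working in $V[G]$, set $S=\{c_\eta : \eta\in\om_1\setminus s\}$, and let $\gamma^*\in\om_1\setminus s$ be such that $c_{\gamma^*}$ is the $<^*$-least element of $S$ (this exists since $<^*$ well-orders $\bbR^{V[G]}\supseteq S$). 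Fix $p_0\in G$ forcing that $\varphi(\cdot,\cdot,\dot r,\bar\alpha)$ relativized to $L(\bbR^{V[\dot G]})$ is a strict well-order of $\bbR^{V[\dot G]}$ and that $\dot r$ is the chosen name, and fix $p^*\le p_0$ in $G$ forcing that $\dot c_{\gamma^*}$ is the $<^*$-least element of $\dot S$, where $\dot S$ is the canonical name for $S$.

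For the symmetry step, pick $\delta\in\om_1\setminus s$ with $\delta\ne\gamma^*$ and $\delta$ outside the (finite) support of $p^*$, and let $\pi=\pi_\sigma$ for the transposition $\sigma=(\gamma^*\ \delta)$. Since $\gamma^*,\delta\notin s$, $\pi$ fixes $\dot r$, $\bar\alpha$, and the parameter-free definition of $L(\bbR^{V[\dot G]})$, hence fixes the name for $<^*$ up to forced equality; since $\sigma$ permutes $\om_1\setminus s$, $\pi$ fixes $\dot S$ up to forced equality; and $\pi(\dot c_{\gamma^*})=\dot c_\delta$. Hence $\pi(p^*)$ forces that $\dot c_\delta$ is the $<^*$-least element of $\dot S$. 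Because $\delta$ avoids the support of $p^*$, the conditions $p^*$ and $\pi(p^*)$ differ only on the blocks $I_{\gamma^*}$ and $I_\delta$, and on each of these one of them is the empty condition; so they are compatible, and their union $q$ carries the same finite data (modulo the order isomorphism $I_{\gamma^*}\cong I_\delta$) on $I_{\gamma^*}$ and on $I_\delta$. Now $q\le p_0$ forces that $<^*$ is a strict well-order of a set containing $\dot S$, and $q$ forces both that $\dot c_{\gamma^*}$ and that $\dot c_\delta$ is the $<^*$-least element of $\dot S$; by uniqueness of the least element, $q\Vdash\dot c_{\gamma^*}=\dot c_\delta$. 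On the other hand, since $q\restriction I_{\gamma^*}$ and $q\restriction I_\delta$ are the same finite function, it is dense below $q$ to force $\dot c_{\gamma^*}$ and $\dot c_\delta$ to disagree at some coordinate not yet decided, so $q\Vdash\dot c_{\gamma^*}\ne\dot c_\delta$; a contradiction. Therefore $L(\bbR^{V[G]})$ does not satisfy $\AC$.

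The isomorphism with Cohen forcing and the localization of $r$ are routine. The delicate part, which I would write out in full, is the symmetry step: one must verify that $\pi$ really fixes every parameter carrying information about $<^*$ — the real $r$, the ordinals $\bar\alpha$, and the (parameter-free definable) class $L(\bbR^{V[\dot G]})$ — as well as the name $\dot S$, while moving $\dot c_{\gamma^*}$ to $\dot c_\delta$; and one must secure the compatibility of $p^*$ with $\pi(p^*)$, which is exactly why $\delta$ is chosen fresh with respect to the support of $p^*$ and why the argument is phrased via the $<^*$-least element of $S$ rather than via a single fixed pair $c_\alpha,c_\beta$ (for a fixed pair the deciding condition could touch both blocks, destroying compatibility).
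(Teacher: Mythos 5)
Your argument is correct: the paper itself gives no proof of this theorem, quoting it from Kunen (Exercise E, Chapter VII), and what you have written is precisely the standard solution to that exercise — reduce a putative well-order in $L(\bbR^{V[G]})$ to a definition from a single real $r$ and ordinals, localize $\dot r$ to countably many coordinates by ccc, and kill it with a block-swapping automorphism that fixes all parameters while moving one fresh Cohen real to another compatible copy. The only points worth writing out when you polish it are the two routine absoluteness facts you implicitly use: that $\bbR$ and hence the parameter ``$\bbR$'' in the definability-from-a-real-and-ordinals fact is parameter-free definable in $L(\bbR)$, and that $<^*$ being a well-order in $L(\bbR^{V[G]})$ transfers to $V[G]$ (via the order-isomorphism with an ordinal), so that the $<^*$-least element of $S$ exists where you need it; note also that your proof never uses $V=L$, so you are in fact proving the statement over an arbitrary ground model of $\ZFC$, which is fine.
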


Now suppose $V=L$.
Let $\bbP=\mathrm{Fn}(\om_1, 2)$.
Take a $(V, \bbP \times \bbP)$-generic $G \times H$,
and work in $V[G\times H]$.
Let $M_G=L(\bbR^{V[G]})$, and
$M_H=L(\bbR^{V[H]})$.
By Theorem \ref{millar}, $M_G$ and $M_H$ do not satisfy $\AC$.
Hence $V=L$ is not a common ground of $M_G$ and $M_H$.
$M_G$ and $M_H$ are grounds of $V[G\times H]$
by Theorem \ref{4.2}.
On the other hand,
by Theorem \ref{4.3},
we have $V[G] \cap V[H]=V=L$.
Because $M_G \subseteq V[G]$ and $M_H \subseteq V[H]$,
we have $M_G \cap M_H=V=L$.
This shows that $M_G$ and $M_H$ cannot have a common ground.


Again, suppose all grounds are uniformly definable  in the generic multiverse.
Then every forcing extension of $V$ can define
its mantle in the same way.
The following is immediate from Theorem \ref{2.4.1} and
the weak homogeneity of $\col(V_\alpha)$:
\begin{lemma}
Suppose all grounds are uniformly definable in the generic multiverse.
\begin{enumerate}
\item For every limit ordinal $\alpha$ and $(V, \col(V_\alpha))$-generic $G_0$, $G_1$,
we have $\mathbb{M}^{V[G_0]}=\mathbb{M}^{V[G_1]} \subseteq \mathbb{M}^V \subseteq V$.
Hence,
for some/any $(V, \col(V_\alpha))$-generic $G$,
$\mathbb{M}^{V[G]}$ can be denoted as $\mathbb{M}^{\col(V_\alpha)}$.
\item The collection $\{\mathbb{M}^{\col(V_\alpha)} \mid \alpha$ is a limit ordinal$\}$
is uniformly definable in $V$.
\item Let $V[G]$ be a forcing extension of $V$.
Then there is a limit ordinal $\alpha$ such that
$\mathbb{M}^{V^{\col(V_\alpha)}} \subseteq \mathbb{M}^{V[G]}$.
\end{enumerate}
\end{lemma}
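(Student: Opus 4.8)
Looking at this lemma, I need to prove three things about the mantle under the assumption that all grounds are uniformly definable in the generic multiverse. Let me think through each part.

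The plan is to read all three items off two facts. First, \textbf{(a)} in any model $N$ of the generic multiverse, $\mathbb{M}^N \subseteq M$ for every ground $M$ of $N$: by item (2) of Theorem \ref{thm1} applied in $N$ every ground of $N$ is of the form $W_r^N$, while $\mathbb{M}^N=\bigcap_r W_r^N$. Second, \textbf{(b)} weak homogeneity of $\col(V_\alpha)$ together with Theorem \ref{2.4.1} decides, for each $x\in V$, the sentence ``$x\in\mathbb{M}$'' (i.e.\ ``$\forall r\,\varphi(x,r)$'' for the fixed formula $\varphi$ of Theorem \ref{thm1}), since its only parameter lies in $V$. I will also use repeatedly that by Theorem \ref{universality}, if $M$ is a ground of $V$ and $V$ is a ground of $V[G]$ then $M$ is a ground of $V[G]$.

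For item (1), first I would note $\mathbb{M}^{V[G_i]}\subseteq V$ by (a), since $V$ is a ground of $V[G_i]$ via $\col(V_\alpha)$. Next, by (b), for $x\in V$ the statement ``$x\in\mathbb{M}$'' is decided by $\Vdash_{\col(V_\alpha)}$, so $x\in\mathbb{M}^{V[G_0]}\iff x\in\mathbb{M}^{V[G_1]}$ for all $(V,\col(V_\alpha))$-generics $G_0,G_1$; combined with $\mathbb{M}^{V[G_i]}\subseteq V$ (so each is equal to its own intersection with $V$) this forces $\mathbb{M}^{V[G_0]}=\mathbb{M}^{V[G_1]}$. Then, since $V$ is a ground of $V[G_0]$, every ground $W_r^V$ of $V$ is a ground of $V[G_0]$ by Theorem \ref{universality}; as $\mathbb{M}^{V[G_0]}$ is an intersection over at least these, $\mathbb{M}^{V[G_0]}\subseteq\bigcap_r W_r^V=\mathbb{M}^V$. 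Finally $\mathbb{M}^V\subseteq V$ because $V$ is a (trivial) ground of itself.

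For item (2), by item (1) the class $\mathbb{M}^{\col(V_\alpha)}$ is a well-defined subclass of $V$, and for $x\in V$ and a limit ordinal $\alpha$ we have $x\in\mathbb{M}^{\col(V_\alpha)}$ iff $\Vdash_{\col(V_\alpha)}$``$\check x\in\mathbb{M}$'' holds in $V$, using the fixed $\varphi$ of Theorem \ref{thm1}. Since $\col(V_\alpha)$ is definable from $\alpha$ and the forcing relation is definable in $\ZF$ (no choice needed), the class $\{\seq{\alpha,x} : \alpha\text{ limit},\ x\in\mathbb{M}^{\col(V_\alpha)}\}$ is definable in $V$, which is exactly the asserted uniform definability of the collection $\{\mathbb{M}^{\col(V_\alpha)} : \alpha\text{ limit}\}$.

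For item (3), given a forcing extension $V[G]$ via $\bbP\in V$, I would take a limit ordinal $\alpha>\mathrm{rank}(\bbP)\cdot\om$ and a $(V[G],\col(V[G]_\alpha))$-generic $H$; by Theorem \ref{2.4.2} there is a $(V,\col(V_\alpha))$-generic $H'$ with $V[G][H]=V[H']$. By item (1), $\mathbb{M}^{V^{\col(V_\alpha)}}=\mathbb{M}^{V[H']}=\mathbb{M}^{V[G][H]}$; and $V[G]$ is a ground of $V[G][H]$, so by Theorem \ref{universality} every ground of $V[G]$ is a ground of $V[G][H]$, whence $\mathbb{M}^{V[G][H]}\subseteq\mathbb{M}^{V[G]}$. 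This yields $\mathbb{M}^{V^{\col(V_\alpha)}}\subseteq\mathbb{M}^{V[G]}$. I do not expect a genuine obstacle here; the only point requiring care is that $\mathbb{M}$ is a proper class, so each claimed class equality must be argued elementwise together with a common bounding class ($V$), and the passage ``grounds of $V$ are grounds of $V[G]$'' must go through the iteration theorem \ref{universality} rather than being taken for granted.
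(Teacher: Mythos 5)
Your proof is correct and follows exactly the route the paper intends: the paper gives no written proof, merely noting the lemma is immediate from Theorem \ref{2.4.1} and the weak homogeneity of $\col(V_\alpha)$, and your argument is a correct and complete fleshing-out of that remark (deciding ``$\check x\in\mathbb{M}$'' by homogeneity, using Theorem \ref{universality} for ground transfer, and Theorem \ref{2.4.2} for item (3)).
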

Thus we can define the \emph{generic mantle}
$g\mathbb{M}=\bigcap \{ \mathbb{M}^{V^{\col(V_\alpha)}} \mid \alpha \in \mathrm{ON} \}$,
which is the intersection of all mantles of all generic extensions.
As in the context of $\ZFC$ (see \cite{FHR}), we can check that $g \mathbb{M}$ is a parameter-free definable transitive model of $\ZF$ containing all ordinals.
Clearly $g\mathbb M \subseteq \mathbb M$.
In the $\ZFC$-context,
the mantle coincides with the generic mantle (\cite{FHR}, \cite{Usuba}).
How is it in $\ZF$?
\begin{question}
Does $\mathbb{M}=g\mathbb{M}$?
\end{question}

\section{When AC is forceable}
In this section, we discuss when $\AC$ is forceable.
For this purpose, we use the DDG, downward directedness of the grounds.


\begin{theorem}[Usuba \cite{Usuba}, in $\ZFC$]\label{4.1}
Let $\{W_r \mid r \in V\}$ be the uniformly definable collection of all grounds satisfying $\AC$
as in  Theorem \ref{Orig. unif. def.}.
Let $X$ be a set.
Then there is a ground $W$ of $V$ such that
$W$ satisfies $\AC$, and
$W$ is a ground of each $W_r$ $(r \in X)$.
\end{theorem}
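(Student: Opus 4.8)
\textbf{Proof plan for Theorem \ref{4.1}.}

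The statement is the well-known Downward Directed Grounds (DDG) theorem, in the strong "set-indexed" form: given a set $X$ indexing grounds $W_r$ ($r\in X$), there is a single ground $W\models\AC$ which is a common ground of all of them. The plan is to reduce the problem to the pairwise case plus a limit argument, and to run Usuba's own uniform bounding argument. First I would recall the bedrock/uniform-definability machinery of Theorem \ref{Orig. unif. def.}: each $W_r$ is a ground via a poset $\bbP_r\in W_r$, and by the refined analysis one may fix, uniformly in $r$, an ordinal $\delta_r$ so that $W_r$ has the $\delta_r$-covering and $\delta_r$-approximation properties in $V$ and $W_r$ is determined by its initial segment $(W_r)_{\theta}$ for large $\theta$ together with these properties (this is exactly the $\ZFC$ analogue of Corollary \ref{2.9} here). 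The key point is that since $X$ is a \emph{set}, $\delta = \sup_{r\in X}\delta_r$ is an ordinal, and each $W_r$ has the $\delta^+$-covering and $\delta^+$-approximation properties; so all the $W_r$ for $r\in X$ are "uniformly close" to $V$.

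Next I would handle the pairwise case: given two grounds $W_0,W_1\models\AC$ with a common covering/approximation ordinal $\delta$, produce a common ground. The argument is to pass to a further generic extension $V[G]$ by $\col(\theta)$ for $\theta$ large (the Levy collapse making everything below some bound countable), where by Theorem \ref{universality} $V[G]$ is a generic extension of each $W_i$; inside $V[G]$ one intersects the two grounds using the mantle-style computation — $W_0\cap W_1$ is computed from the common data — and shows via the approximation and covering properties that $W := W_0\cap W_1$ is itself a ground of both (this uses that a model with the covering and approximation properties for $V$ that is appropriately closed is a ground, together with Theorem \ref{4.2}/\ref{4.3} to realize $V$ as a generic extension of $W$). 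Then I would iterate: for the set $X$, well-order it, and build a decreasing chain of common grounds $\langle W^\xi : \xi < \ot(X)\rangle$, taking intersections at limits; the uniform bound $\delta$ guarantees the covering and approximation properties are preserved at each stage, and the chain stabilizes/has a well-defined intersection $W$ which is a ground of every $W_r$, $r\in X$, and $W\models\AC$ since each $W_r$ does and $\AC$ is downward absolute to grounds here (grounds of models of $\AC$ satisfy $\AC$ — this is the $\ZFC$ context).

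\textbf{Main obstacle.} The genuinely hard step is the pairwise intersection: showing that $W_0\cap W_1$ (or the appropriate slightly larger model carved out by the common covering/approximation data) is actually a ground — i.e., that $V$ is a set-generic extension of it — rather than merely a transitive $\ZF$-model. This is where the covering and approximation properties must be leveraged at full strength: one shows $W_0\cap W_1$ has the $\delta^+$-approximation and covering properties for $V$, identifies it with $W_r'$ for a suitable $r$ in the uniform list, and then invokes the forcing-theoretic characterization (the reverse direction of the bedrock theorem / Theorem \ref{4.2}) to get an actual poset witnessing groundhood. Controlling the transfinite iteration so that the bound $\delta$ does not grow — which is exactly why $X$ being a set matters — is the second subtlety, but it is essentially bookkeeping once the pairwise step and the uniform $\delta$ are in hand. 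I would expect the write-up to cite Usuba \cite{Usuba} for the pairwise and iteration lemmas and focus on assembling them with the set-index $X$.
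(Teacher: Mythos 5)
First, a point of order: the paper does not prove Theorem \ref{4.1} at all; it is imported verbatim from Usuba's earlier paper \cite{Usuba} (the strong downward directed grounds theorem) and used as a black box in Section 6. So there is no in-paper proof to match your proposal against, and any attempt here is really an attempt to reprove the main theorem of \cite{Usuba}. You correctly identify the statement as the set-indexed DDG, and you correctly isolate the role of the hypothesis that $X$ is a set (it yields a single ordinal $\delta$ such that every $W_r$, $r\in X$, has the $\delta$-covering and $\delta$-approximation properties), as well as the relevant toolkit: the cover/approximation properties, the uniqueness lemma, and Grigorieff's intermediate-model theorem.

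However, the core of your plan has a genuine gap. You propose to take the common ground of $W_0$ and $W_1$ to be the intersection $W_0\cap W_1$, verified to be a ground via the covering and approximation properties, and then to iterate transfinitely, ``taking intersections at limits.'' Neither step is available. It is not known (and is not what the DDG asserts) that the intersection of two grounds is a ground, or even a model of $\ZFC$: the common ground produced by the theorem is in general a proper subclass of the intersection. Concretely, there is no reason $W_0\cap W_1$ should have the $\delta$-covering property for $V$ --- a small set $x\subseteq W_0\cap W_1$ is covered by some $y_0\in W_0$ and some $y_1\in W_1$ of small $W_i$-cardinality, but nothing forces a small cover to lie in the intersection --- and without that you cannot invoke the uniqueness machinery or Theorem \ref{4.2} to realize $V$ as a generic extension of it. The same objection applies with more force to intersections along a decreasing transfinite chain of grounds; indeed, the difficulty of showing that such intersections (e.g., the mantle) model $\ZF(\mathsf{C})$ is precisely why the DDG was needed in the first place, so your argument is circular in spirit. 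Finally, even a correct pairwise directedness lemma would not yield set-directedness by transfinite iteration without a separate uniformity argument; Usuba's actual proof handles all set-many grounds simultaneously by a direct construction (a strong-covering lemma at a large strong limit cardinal of high cofinality, combined with the uniqueness lemma), not by iterating a two-ground step. A smaller but real slip: your parenthetical ``grounds of models of $\AC$ satisfy $\AC$'' is false, as Section 5 of this very paper shows with the choiceless ground $L(\bbR^{V[G]})$ of $V[G\times H]\models\ZFC$; in the intended argument $W$ satisfies $\AC$ because it is constructed within the uniformly definable family of $\AC$-grounds, not by downward absoluteness.
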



\begin{proposition}\label{4.55}
Suppose $\AC$ is forceable,
	and let $V[G]$ be a generic extension of $V$ such that
	$V[G]$ satisfies $\AC$.
	Then there is a ground $W$ of $V[G]$ and a set $X$ such that
	$V=W(X)$ and $W$ satisfies $\AC$.
\end{proposition}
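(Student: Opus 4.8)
The plan is to start from the forcing that makes $V[G]$ satisfy $\AC$, analyze it inside $V$, and then use Blass's characterization together with the Grigorieff machinery from Section 2. Since $\AC$ is forceable, by Blass's theorem $\mathrm{SVC}$ holds in $V$, so fix a set $S$ witnessing $\mathrm{SVC}$: for every set $Y$ there is an ordinal $\alpha$ and a surjection $f : S \times \alpha \to Y$. A standard fact about $\mathrm{SVC}$ is that the poset $\col(S)$ (finite partial functions from $\om$ into $S$, as in Theorem \ref{2.4.2}) forces $\AC$; concretely, if $H$ is $(V,\col(S))$-generic then in $V[H]$ every set is well-orderable, because $S$ becomes the surjective image of an ordinal there and $\mathrm{SVC}$ is preserved in the appropriate sense. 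So first I would replace $V[G]$ by such a canonical $\col(S)$-extension: by Theorem \ref{universality}, after collapsing enough we may assume $V[G] = V[H]$ where $H$ is $(V, \col(S))$-generic — more precisely, pass to a further extension $V[G][H']$ in which $\AC$ still holds and which is of the form $V[H'']$ with $H''$ generic for a single weakly homogeneous poset; it suffices to prove the proposition for that extension, since a ground of a further generic extension that works for $V$ also works after pulling back. Actually the cleanest route: WLOG $V[G]=V[H]$ with $H$ being $(V,\col(S))$-generic for an ordinal-definable-from-$S$ weakly homogeneous poset.

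Next I would locate the ground $W$. The candidate is the mantle-like object, or more simply: inside $V[H]$, which satisfies $\AC$, consider the Fuchs–Hamkins–Reitz uniformly definable grounds of $V[H]$, or just take $W$ to be a ground of $V[H]$ below $V$. The key point is that $V$ itself sits as an intermediate model $W_0 \subseteq V \subseteq V[H]$ where $W_0$ is some $\ZFC$-ground of $V[H]$; but $V$ need not satisfy $\AC$, so $V$ is not itself the $W$ we want. Instead, I would use Theorem \ref{4.2} (Grigorieff's Theorem B): since $V[H]$ is a generic extension of $V$, and we want a model $W \subseteq V$ with $V = W(X)$, we need $W$ to be a ground of $V[H]$ (equivalently of $V$, since $V[H]$ is generic over $V$ and grounds compose by Theorem \ref{universality}) such that $V$ is obtained from $W$ by adjoining a single set. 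The natural choice is to take any $\ZFC$-ground $W$ of the $\col(S)$-extension that is small enough; by the folklore analysis of $\col(S)$-forcing over an $\mathrm{SVC}$ model, $V[H]$ is actually a set-forcing extension of an inner model $W$ of $\ZFC$ — indeed one can take $W = \HOD_S^{V[H]}$ or a suitable ground thereof — and then $V$, being intermediate between the $\ZFC$-model $W$ and its generic extension $V[H]$, satisfies: $V$ is a ground of $V[H]$ (if the intermediate-model theorem applies) and $V = W(X)$ for a single $X$ by Theorem \ref{4.2}, using that $V = W(X)$ iff $V[H]$ is a generic extension of $V$, which holds by hypothesis.

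The main obstacle, and where I would spend most of the effort, is pinning down the $\ZFC$-ground $W$ of $V[H]$ with $W \subseteq V$ and verifying both that $V[H]$ is a generic extension of $W$ (so that $W$ is genuinely a ground, satisfying $\AC$) and that $V$ lies between $W$ and $V[H]$ in the right way to invoke Theorem \ref{4.2}; the latter needs $V$ to be a ground of $V[H]$, or at least needs the equivalence in Theorem \ref{4.2} applied with $V[H]$ in the role of ``$V$'', $W$ in the role of the ground, and $V$ in the role of $M$ — giving $V = W(X)$ precisely because $V[H]$ is a generic extension of $V$. Concretely I would: (i) show $\col(S)$ forces $\AC$ over $V$; (ii) inside $V[H]$, find a $\ZFC$-ground $W$ of $V[H]$ with $W \subseteq V$ — e.g. using that $\col(S)$ is weakly homogeneous and $S$-ordinal-definable, so that an $\HOD$-style or Vopěnka-style inner model of $V[H]$ gives a $\ZFC$-model over which $V[H]$ is a set-generic extension, and arranging $W \subseteq V$ by choosing the ground low enough (here one may need Theorem \ref{4.1}, the DDG in $\ZFC$, applied inside $V[H]$ to intersect down below $V$); (iii) apply Theorem \ref{4.2} to the triple $W \subseteq V \subseteq V[H]$ to conclude $V = W(X)$ for some $X \in V$, since by hypothesis $V[H]$ is a generic extension of $V$. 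Step (ii) — producing a $\ZFC$-ground of $V[H]$ contained in $V$ — is the crux; the rest is bookkeeping with the cited theorems.
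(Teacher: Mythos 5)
You have correctly identified the two ingredients that close the argument at the end (the DDG, Theorem~\ref{4.1}, and Grigorieff's Theorem~\ref{4.2} applied to the triple $W \subseteq V \subseteq V[G]$), but the step you yourself flag as the crux --- producing a $\ZFC$-ground $W$ of the $\AC$-extension with $W \subseteq V$ --- is exactly where the proposal has a genuine gap. Neither $\mathrm{HOD}_S^{V[H]}$ nor a Vop\v{e}nka-style inner model is guaranteed to lie inside $V$ or to be a ground, and applying the DDG ``to intersect down below $V$'' begs the question: a common ground of a set of grounds of $V[H]$ has no reason to be contained in $V$ unless the family of grounds is chosen so that this containment is forced. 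The missing idea is the mutual-genericity trick: take $H$ to be $(V[G],\bbP)$-generic over the \emph{same} poset $\bbP$, so that $G\times H$ is $(V,\bbP\times\bbP)$-generic and $V[G\times H]$ is a common $\AC$-extension of $V[G]$ and $V[H]$; apply Theorem~\ref{4.1} in $V[G\times H]$ to get a common $\ZFC$-ground $W$ of $V[G]$ and $V[H]$; and then invoke Solovay's intersection lemma (Theorem~\ref{4.3}), $V[G]\cap V[H]=V$, to conclude $W \subseteq V$. Without Theorem~\ref{4.3} there is no mechanism in your outline that pins $W$ below $V$, and that lemma is never mentioned in the proposal.

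A secondary issue: the preliminary reduction to a $\col(S)$-extension via $\mathrm{SVC}$ is unnecessary and introduces its own debt. The proposition asks for a ground of the \emph{given} extension $V[G]$; if you prove the statement only for a further extension $V[G][H']$, you still owe an argument that the ground you find there is a ground of $V[G]$ itself (this is recoverable, since both $W$ and $V[G]$ satisfy $\AC$ and the $\ZFC$ intermediate-model theorem applies, but it is an extra step you do not address). The paper's proof works directly with the given $\bbP$ and avoids this entirely.
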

\begin{proof}
Let $\bbP$ be a poset, $G$ be $(V, \bbP)$-generic, and
suppose $V[G]$ satisfies $\AC$.
Take a $(V[G], \bbP)$-generic $H$.
We may assume that $V[H]$ satisfies $\AC$.
Then $G \times H$ is $(V, \bbP \times \bbP)$-generic, and
$V[G \times H]$ is a common forcing extension of $V[G]$ and $V[H]$.
Because both $V[G]$ and $V[H]$ satisfy $\AC$,
$V[G \times H]$ also satisfies $\AC$.
Then, by Theorem \ref{4.1}, there is a model $W$ of $\ZFC$
which is a common ground of $V[G]$ and $V[H]$.
We know $V=V[G] \cap V[H]$ by Theorem \ref{4.3}, hence $W \subseteq V \subseteq V[G]$.
$V[G]$ is a forcing extension of $W$,
so $V$ must be of the form $W(X)$ for some $X \in V$
by Theorem \ref{4.2}.
\qed\end{proof}

Now we have the following characterization.
\begin{corollary}\label{4.5}
The following are equivalent:
\begin{enumerate}
\item $\AC$ is forceable.
\item There is a transitive model $W$ of $\ZFC$ and a set $X$ such that
$W$ is definable in $V$ with parameters from $W$ and $V=W(X)$.
\item There is a transitive model $W$ of $\ZFC$ and a set $X$ such that
$W$ is definable in $V$ with parameters from $W$, $V=W(X)$, and $W$ is a ground of some generic extension of $V$.
\end{enumerate}
\end{corollary}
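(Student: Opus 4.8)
The plan is to prove the three-way equivalence in Corollary \ref{4.5} by showing $(3) \Rightarrow (2)$ trivially, $(2) \Rightarrow (1)$ by a forcing argument, and $(1) \Rightarrow (3)$ using Proposition \ref{4.55} together with Theorem \ref{universality}. The implication $(3) \Rightarrow (2)$ is immediate, since $(3)$ is just $(2)$ with an extra clause. So the real content lies in the other two implications.

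For $(2) \Rightarrow (1)$: assume $W$ is a definable (with parameters from $W$) transitive model of $\ZFC$ and $V = W(X)$ for some set $X$. First I would fix, in $W$, a cardinal $\theta$ large enough that $X \in L(W_\theta \cup \{X\})$ and that $W_\theta$ ``sees'' all the relevant structure; since $V = W(X) = \bigcup_\alpha L(W_\alpha \cup \{X\})$, the set $X$ appears at some level. The key idea is that $\col(W_\theta)$, the collapse poset coded in $W$, forces $\AC$: working in $V$, force with $\col(W_\theta)$ to get $V[G]$; inside $W[G]$ (which satisfies $\ZFC$ because $W \models \ZFC$ and $\col(W_\theta) \in W$) the set $W_\theta$ becomes well-orderable, hence so does $X$ (as $X$ is built from $W_\theta$ and $\{X\}$ by the $L$-construction, and once $W_\theta$ is well-ordered the transitive closure of $\{X\}$ can be well-ordered). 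Then every set of $V[G]$ is built from ordinals, $X$, and elements of $W$ — all now well-orderable in $W[G] \subseteq V[G]$ — so $V[G] = W(X)[G] \subseteq W[G](X)$ satisfies $\AC$. Here I would invoke the fact (from the preliminaries) that $W[G]$ is a model of $\ZFC$ containing $X$ as a well-orderable set, and that $V[G] = W[G](X)$, so $V[G]$ is a model of $\ZF$ in which every set is a surjective image of some $W_\alpha \times \gamma$ with $W_\alpha$ well-orderable; that is exactly $\mathrm{SVC}$-style reasoning, which yields $\AC$ in $V[G]$.

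For $(1) \Rightarrow (3)$: suppose $\AC$ is forceable. By Proposition \ref{4.55} there is a ground $W$ of some generic extension $V[G]$ (with $V[G] \models \AC$) and a set $X$ with $V = W(X)$ and $W \models \ZFC$. Since $W$ is a ground of $V[G]$, by Theorem \ref{Orig. unif. def.} applied inside $V[G]$ the model $W$ equals $W_r^{V[G]}$ for some parameter $r$, so $W$ is definable in $V[G]$ with a parameter; but I want definability in $V$ with parameters from $W$. To get this I would note that $W$ is a ground of $V[G]$, and $V[G]$ is a generic extension of $V$, so by the forcing-theoretic setup $W$ is a ground of $V$ is \emph{not} quite what we have — rather $V$ sits between $W$ and $V[G]$. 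The point is that $V = W(X)$ with $X \in V$, so by Theorem \ref{4.2} (with $W$ as the ground of $V[G]$ and $M = V$), $V$ is a generic extension of $W$; then by Laver--Woodin definability applied in $V$, the ground $W$ is definable in $V$ with a parameter, and that parameter can be taken in $W$ (the Laver--Woodin parameter is essentially $V_{\delta+1}^W$ for suitable $\delta$, which lies in $W$). Finally $W$ is a ground of the generic extension $V[G]$ of $V$, giving clause (3).

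The main obstacle I anticipate is the careful bookkeeping in $(2) \Rightarrow (1)$: one must verify that collapsing $W_\theta$ genuinely well-orders $X$ and hence all of $V[G]$, which requires choosing $\theta$ correctly and tracking how the $L(W_\alpha \cup \{X\})$-hierarchy interacts with the generic collapse — in particular that $V[G] = W[G](X)$, for which Theorem \ref{universality} and the analysis of $W(X)$ in Definition \ref{W(X)} are needed. A secondary subtlety is ensuring, in $(1) \Rightarrow (3)$, that the definition of $W$ descends from $V[G]$ to $V$ with a parameter lying in $W$ rather than merely in $V$; here the explicit form of the Laver--Woodin/Reitz definition (which uses a parameter of the form $V_\delta^W$ for the ground $W$) is what makes it work, and I would cite Theorem \ref{Orig. unif. def.} and Theorem \ref{4.2} for this.
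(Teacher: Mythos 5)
There are two genuine gaps, one in each nontrivial implication.

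In $(2)\Rightarrow(1)$ you collapse the wrong set. You force with $\col(W_\theta)$ for a level $W_\theta$ of the inner model $W$; but since $W\models\ZFC$, the set $W_\theta$ is already well-orderable in $W$ (hence in $V$), so $\col(W_\theta)$ is a well-orderable poset and cannot create a well-ordering of a set that had none. Concretely, take $V=L(\bbR)$ with $\bbR$ not well-orderable, $W=L$, $X=\bbR$: if a well-orderable poset forced ``$\bbR^V$ is well-orderable,'' then choosing for each real the least pair $(\alpha,p)$ with $p\Vdash\dot f(\alpha)=\check a$ would already well-order $\bbR^V$ in $V$, a contradiction. Your parenthetical ``once $W_\theta$ is well-ordered the transitive closure of $\{X\}$ can be well-ordered'' is exactly the false step: $X$ is thrown into $L(W_\alpha\cup\{X\})$ as a generator, not built from $W_\theta$, and $\mathrm{trcl}(X)$ need not be coded by any $W_\theta$. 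The paper's choice of poset is $\col(Y)$ for $Y=\mathrm{trcl}(X)$, a poset of $V$ that is typically \emph{not} well-orderable there; collapsing $Y$ to be countable is what makes every $L(W_\alpha\cup\{X\})$, hence every element of $V$ and then of $V[G]$, well-orderable. Your SVC-style remark is salvageable only with seed $\mathrm{trcl}(X)$, not $W_\theta$.

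In $(1)\Rightarrow(3)$ you invoke ``Laver--Woodin definability applied in $V$'' to define $W$ in $V$. But Theorem~\ref{Orig. unif. def.} is a theorem of $\ZFC$ about the ambient universe, and here $V$ is choiceless; whether a $\ZFC$-ground of a choiceless universe is definable there is precisely the open problem the paper is organized around (cf.\ the discussion of Gitman--Johnstone in the introduction). The uniqueness half of the Laver--Woodin--Hamkins argument needs $\AC$ in the extension to produce suitably closed elementary submodels, which is what LS cardinals are introduced to replace. The paper's actual route is: $W$ satisfies the $\alpha$-norm covering and approximation properties for $V[G]$, hence for $V$; since $\AC$ is forceable, $V$ has proper class many LS cardinals (Corollary~\ref{6.2}); so Corollary~\ref{2.9} identifies $W$ as some $W'_r$ with $r\in W$, giving definability in $V$ with parameters from $W$. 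Without routing through the LS-cardinal machinery, this implication does not go through.
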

\begin{proof}
(3) $\Rightarrow$ (2) is trivial.

(2) $\Rightarrow$ (1).
Suppose $V=W(X)$. Let $Y$ be the transitive closure of $X$,
and $\bbP=\col(Y)$.
Take a $(V, \bbP)$-generic $G$.
In $V[G]$, $Y$ is well-orderable.
Because $W$ satisfies $\AC$ and $Y$ is well-orderable in $V[G]$,
every element of $V=W(X)$ is well-orderable in $V[G]$.
Then every element of $V[G]$ is well-orderable, because
there is a canonical class surjection from $V$ onto $V[G]$.

(1) $\Rightarrow$ (3). By Proposition \ref{4.55},
we can find a set-forcing extension $V[G]$ of $V$ and
a ground $W$ of $V[G]$ such that $V[G]$ satisfies $\AC$,
$W$ is a model of $\ZFC$, 
and $V=W(X)$ for some $X \in V$.
We have to check that $W$ is definable in $V$.
Because $W$ is a ground of $V[G]$,
$W$ satisfies the $\alpha$-norm covering and approximation properties for $V[G]$ for some large $\alpha$.
Then it is easy to check that $W$ also satisfies the $\alpha$-norm covering and approximation properties for $V$.
Since $\AC$ is forceable over $V$,
$V$ has proper class many LS cardinals.
Then, by Corollary \ref{2.9}, $W$ is of the form $W'_r$ for some $r \in W$,
hence $W$ is definable in $V$.
\qed\end{proof}

\begin{corollary}\label{5.4.11}
Suppose $\AC$ is forceable.
Then for every ground $W$ of $V$,
there is a transitive model $M$ of $\ZFC$ and a set $X \in W$
such that $M$ is definable in $V$ with parameters from $M$ and $W=M(X)$.
\end{corollary}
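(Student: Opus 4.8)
The plan is to reduce to Corollary \ref{4.5} (equivalently Proposition \ref{4.55}) applied \emph{inside} the ground $W$, and then to upgrade the definability of the resulting model of $\ZFC$ from $W$ to $V$ using Lemma \ref{2.10} and Corollary \ref{2.9}.

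First I would fix coordinates. Let $W$ be a ground of $V$, say $V = W[G]$ for a poset $\bbP \in W$ and a $(W,\bbP)$-generic $G$. Since $\AC$ is forceable over $V$, fix a poset $\bbQ \in V$ forcing $\AC$ over $V$ and a $(V,\bbQ)$-generic $H$, so $V[H] \vDash \AC$. As $V[H] = W[G][H]$ with $\bbQ \in V = W[G]$, Theorem \ref{universality} yields a poset $\bbR \in W$ and a $(W,\bbR)$-generic $G^{+}$ with $W[G^{+}] = W[G][H] = V[H]$. The crucial feature of this step is that the single model $V[H]$ is at once a set-generic extension of $W$ (via $\bbR$) and of $V$ (via $\bbQ$), and it satisfies $\AC$. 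Since ``every set is well-orderable'' is a single sentence holding in $W[G^{+}]$, the forcing theorem applied in $W$ gives $q \in G^{+}$ with $q \Vdash_{\bbR} \AC$ in $W$; hence $\AC$ is forceable over $W$ as well.

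Next I would apply Proposition \ref{4.55} inside $W$ --- legitimate since $W \vDash \ZF$ and these are theorems of $\ZF$ --- taking for the generic extension the model $V[H] = W[G^{+}]$, which is a generic extension of $W$ satisfying $\AC$. This produces a transitive model $M$ of $\ZFC$ and a set $X$ with $W = M(X)$ and with $M$ a ground of $V[H]$; since $M \subseteq M(X) = W$ and $X \in M(X) = W$, we have $X \in W$ as required. (If $W$ itself satisfies $\AC$ one may simply take $M = W$ and $X = \emptyset$.) At this point $M$ is known to be definable only in $W$, so the remaining task is to recover its definability in $V$ itself.

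Finally I would transfer definability to $V$. Let $\bbR_{0} \in M$ be a poset witnessing that $M$ is a ground of $V[H]$. Since $V[H] \vDash \AC$, it has an LS cardinal $\ka > \mathrm{rank}(\bbR_{0})$, so Lemma \ref{2.10} shows $M$ satisfies the $\ka$-norm covering and $\ka$-norm approximation properties for $V[H]$. Because $V \subseteq V[H]$, the same elementary monotonicity used in the proof of Corollary \ref{4.5} shows $M$ also satisfies these properties for $V$: for $x \in V$ one has $\norm{x}^{V[H]} \le \norm{x}^{V}$, so the covering hypothesis is inherited downward, and the approximation hypothesis for $M$ refers only to $x$ and to elements of $M$, not to the ambient model. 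Since $\AC$ is forceable over $V$, $V$ has proper class many LS cardinals by Corollary \ref{6.2}, so by Corollary \ref{2.9} (and the Note following it, since $M$ need not be assumed a class of $V$) there is $r \in M$ with $W'_{r} = M$; hence $M$ is definable in $V$ with a parameter from $M$. Together with $W = M(X)$ and $X \in W$ this yields the conclusion. I expect the main obstacle to be the coordination in the first step: one must secure a single generic extension serving simultaneously as an extension of $W$ and of $V$, since only then is the model $M$ produced by Proposition \ref{4.55}-in-$W$ guaranteed to be a ground of a set-generic extension of $V$ --- which is exactly what makes the norm covering and approximation properties, and hence the definability in $V$, available.
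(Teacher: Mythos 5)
Your proof is correct and follows the paper's route: the paper's own proof is just ``$\AC$ is forceable over $W$, so apply the previous corollary inside $W$,'' and your argument is the honest unpacking of that reduction. In particular, your explicit coordination (arranging a single extension $V[H]=W[G^{+}]$ that is generic over both $W$ and $V$, so that the $\ka$-norm covering and approximation properties of $M$ for $V[H]$ restrict to $V$ and Corollary \ref{2.9} applies in $V$) is exactly the step needed to upgrade ``$M$ definable in $W$'' to ``$M$ definable in $V$,'' which the paper leaves implicit.
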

\begin{proof}
If $W$ is a ground of $V$, then $\AC$ is forceable over $W$.
Then the assertion follows from the previous corollary.
\qed\end{proof}

\begin{corollary}\label{5.5.5}
Suppose $V$ satisfies $\AC$.
Then for every transitive model $M$ of $\ZF$,
$M$ is a ground of $V$ if and only if
there is a ground $W$ of $V$ and a set $X$ such that
$W$ satisfies $\AC$ and $M=W(X)$.
\end{corollary}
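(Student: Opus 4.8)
The plan is to treat the two implications separately. The backward direction will follow at once from Grigorieff's Theorem~\ref{4.2}, with essentially no use of $\AC$. The forward direction carries all the work, and the strategy there is to reduce to Proposition~\ref{4.55} applied with the ground $M$ in the role of the base universe; the only point that needs checking before that proposition can be invoked is that $\AC$ is forceable over $M$.

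For the backward direction, suppose we are handed a ground $W$ of $V$ with $W \models \AC$ and a set $X$ with $M = W(X)$. Then $X \in M$ automatically, and since $W$ is a ground of $V$ it is a class of $V$ contained in $V$; as $V \models \ZF$ and $X \in V$ this gives $W \subseteq M \subseteq V$. So $W$, $M$, $V$ meet the hypotheses of Theorem~\ref{4.2} (with $W$ the ground and $M$ the intermediate $\ZF$-model), and since $M$ has the form $W(X)$ with $X \in M$, that theorem yields that $V$ is a generic extension of $M$, i.e.\ $M$ is a ground of $V$. This half uses only that $W$ is a ground of $V$ and never appeals to $\AC$ in $V$.

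For the forward direction, assume $M$ is a ground of $V$, say $V = M[G]$ for some $(M,\bbP)$-generic $G$ with $\bbP \in M$. I would first show that $\AC$ is forceable over $M$. Fix a limit ordinal $\alpha > \mathrm{rank}(\bbP)\cdot\om$ and let $H$ be $(V,\col(V_\alpha))$-generic; since $V = M[G]$ (so $V_\alpha = M[G]_\alpha$), Theorem~\ref{2.4.2} applied inside $M$ gives a $(M,\col(M_\alpha))$-generic $H'$ with $M[H'] = M[G][H] = V[H]$. As $V \models \AC$ and $\col(V_\alpha)$ is a set poset, $V[H] \models \AC$, hence $M[H'] \models \AC$. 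Now $\col(M_\alpha)$ is weakly homogeneous and lies in $M$, so by Theorem~\ref{2.4.1} --- a theorem of $\ZF$, used inside $M$, with no parameters --- one of $\Vdash^M_{\col(M_\alpha)} \AC$ and $\Vdash^M_{\col(M_\alpha)} \neg\AC$ holds; the generic $H'$ rules out the latter, so $\Vdash^M_{\col(M_\alpha)} \AC$ and $\AC$ is forceable over $M$. Then Proposition~\ref{4.55}, applied with $M$ as the base universe and $V = M[G]$ as the given generic extension of $M$ satisfying $\AC$, produces a ground $W$ of $V$ and a set $X$ with $M = W(X)$ and $W \models \AC$ --- exactly the right-hand side.

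The main obstacle is the forward direction, and its real content is packaged in Proposition~\ref{4.55}, whose own proof passes through a common extension of two $\AC$-satisfying models and leans on the $\ZFC$ directedness result Theorem~\ref{4.1} together with Solovay's Theorem~\ref{4.3}. The genuinely new step I have to supply is the verification that $\AC$ is forceable over $M$, and that is handled by absorbing $H$ into a $\col(M_\alpha)$-generic (Theorem~\ref{2.4.2}) and exploiting the weak homogeneity of $\col(M_\alpha)$. One convention to keep fixed is the meaning of ``$M = W(X)$ for a set $X$'': here $X \in V$, equivalently $X \in M$ since $W(X) \ni X$, so $W(X)$ is a class lying between $W$ and $V$; with that reading both implications run as above.
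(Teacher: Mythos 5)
Your proof is correct and follows essentially the same route as the paper: the forward direction reduces to Proposition~\ref{4.55} once $\AC$ is seen to be forceable over $M$, and the backward direction is an immediate application of Theorem~\ref{4.2}. Your detour through $\col(M_\alpha)$ and weak homogeneity to certify forceability is sound but more than needed --- since $V=M[G]\models\AC$, some $p\in G$ forces $\AC$ over $M$ and the cone below $p$ already witnesses forceability, which is why the paper treats this step as immediate.
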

\begin{proof}
If $M$ is a ground of $V$,
then $\AC$ is forceable over $M$.
We can find required $W$ and $X$ by Proposition \ref{4.55}.
For the converse, suppose $M=W(X)$ for some ground $W$ of $V$ satisfying $\AC$
and a set $X$.
Then $M$ is a ground by Theorem \ref{4.2}.
\qed\end{proof}

\begin{corollary}
Suppose $\AC$ is forceable.
Then the generic mantle is a model of $\ZFC$.
\end{corollary}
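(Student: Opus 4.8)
The plan is to reduce the statement to the classical $\ZFC$ situation. Since $\AC$ is forceable, fix a poset $\bbP$ and a $(V,\bbP)$-generic $G$ with $V[G]\models\ZFC$. By Corollary~\ref{6.2} there are proper class many LS cardinals in $V$, and this persists throughout the generic multiverse, so over $V$, over $V[G]$ and over all their forcing extensions the classes $g\mathbb{M}$, $\mathbb{M}$ and the $\mathbb{M}^{\col(V_\alpha)}$ are all defined. I would then: (i) prove $g\mathbb{M}^V=g\mathbb{M}^{V[G]}$, and (ii) prove that $g\mathbb{M}^{V[G]}$, as defined here, equals the classical generic mantle of $V[G]$, which is known to model $\ZFC$.

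For (i), let $c=\mathrm{rank}(\bbP)\cdot\om$. For every limit $\alpha>c$, Theorem~\ref{2.4.2} gives that forcing over $V[G]$ with $\col(V[G]_\alpha)$ produces a $\col(V_\alpha)$-generic extension of $V$; since $\col(V_\alpha)$ and $\col(V[G]_\alpha)$ are ordinal definable and weakly homogeneous, Theorem~\ref{2.4.1} makes $\mathbb{M}^{V^{\col(V_\alpha)}}$ and $\mathbb{M}^{V[G]^{\col(V[G]_\alpha)}}$ well defined and shows they coincide. Also, by Theorem~\ref{2.4.2} again, for each limit $\beta$ some $\col(V_\alpha)$-generic extension of $V$ is a forcing extension of some $\col(V_\beta)$-generic extension of $V$ for all sufficiently large limit $\alpha$, so $\mathbb{M}^{V^{\col(V_\alpha)}}\subseteq\mathbb{M}^{V^{\col(V_\beta)}}$; hence the intersection $g\mathbb{M}^V=\bigcap_\alpha\mathbb{M}^{V^{\col(V_\alpha)}}$ is unaffected by discarding the indices $\alpha\le c$, and the same holds over $V[G]$. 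Combining, $g\mathbb{M}^V=\bigcap_{\alpha>c}\mathbb{M}^{V^{\col(V_\alpha)}}=\bigcap_{\alpha>c}\mathbb{M}^{V[G]^{\col(V[G]_\alpha)}}=g\mathbb{M}^{V[G]}$.

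For (ii), every generic extension $N$ of $V[G]$ satisfies $\AC$. The key point is Corollary~\ref{5.5.5} applied inside $N$: each ground of $N$ has the form $W(X)$ for some ground $W$ of $N$ with $W\models\AC$, and since $W\subseteq W(X)$ the intersection of all grounds of $N$ equals the intersection of the $\ZFC$-grounds of $N$. Applying this to $N=V[G]^{\col(V[G]_\alpha)}$ and intersecting over $\alpha$ shows that the paper's $g\mathbb{M}^{V[G]}$ is precisely the generic mantle of $V[G]$ in the usual $\ZFC$ sense (the $\col$-absorption step is equally valid under $\ZFC$). By Fuchs--Hamkins--Reitz~\cite{FHR} and Usuba~\cite{Usuba}, in $\ZFC$ the generic mantle coincides with the mantle and is a model of $\ZFC$; therefore $g\mathbb{M}^{V[G]}\models\ZFC$, and by (i) so does $g\mathbb{M}^V$.

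The substantive facts --- that the $\ZFC$ mantle models $\ZFC$ and coincides with the $\ZFC$ generic mantle --- are quoted from \cite{FHR} and \cite{Usuba}, so the real work is bookkeeping: the paper's $g\mathbb{M}$ intersects mantles of $\col(V_\alpha)$-extensions with ``mantle'' meaning the intersection of \emph{all} grounds, not only the $\AC$-grounds, and Corollary~\ref{5.5.5} is exactly what closes that gap once one is over a model of $\AC$. I expect this identification in step (ii), together with checking the forcing-invariance in step (i) via the collapse-absorption Theorem~\ref{2.4.2}, to be the main obstacle.
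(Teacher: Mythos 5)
Your proposal is correct and follows essentially the same route as the paper: force $\AC$ via a collapse, use Corollary~\ref{5.5.5} to identify the mantle of an $\AC$-extension with the intersection of its $\ZFC$-grounds, and quote the $\ZFC$ results of \cite{FHR} and \cite{Usuba}. Your steps (i) and (ii) in fact spell out the paper's closing ``we can check'' more explicitly, by showing $g\mathbb{M}^V$ coincides with the classical generic mantle of a single $\AC$-extension rather than merely intersecting the $\mathbb{M}^{\col(V_\alpha)}$'s.
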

\begin{proof}
Since $\AC$ is forceable, for every sufficiently large $\alpha$ and
$(V, \col(V_\alpha))$-generic $G$, we have that $V[G]$ satisfies $\AC$.
By Corollary \ref{5.5.5}, if $M \subseteq V[G]$ is a ground of $V[G]$,
then there is a ground $W$ of $V[G]$ such that $W \subseteq M$ and
$W$ satisfies $\AC$.
Hence the mantle of $V[G]$ coincides with the intersection
of all grounds of $V[G]$ satisfying $\AC$.
In $\ZFC$, it is known that
the intersection of all grounds satisfying $\AC$ is a
model of $\ZFC$ (\cite{Usuba}).
Therefore we have that
for every large $\alpha$,
$\mathbb M^{\col(V_\alpha)}$ is a model of $\ZFC$.
Since the generic mantle is the intersection of
the $\mathbb M^{\col(V_\alpha)}$'s, we can check that
the generic mantle is a model of $\ZFC$.
\qed\end{proof}

A \emph{symmetric model} is a type of choicelss model constructed as a submodel of 
a generic extension.
See Grigorieff \cite{G} for the definition of symmetric models.
We use the following characterization of symmetric models.

For a class $M$ of $V$,
let $\mathrm{OD}(M)$ be the collection of all sets $x$
such that $x$ is definable with parameters from $M$ and ordinals.
$\mathrm{HOD}(M)$ is the collection of all sets $x$
such that the transitive closure of $x$ is a subset of
$\mathrm{OD}(M)$.
If a class $M$ is a transitive  model of $\ZF$ containing all ordinals,
then $\mathrm{HOD}(M)$ is a transitive model of $\ZF$ with
$M \subseteq \mathrm{HOD}(M)$.

\begin{theorem}[Theorem C in \cite{G}]
For  transitive models $M$ and $N$ of $\ZF$ and a generic extension $M[G]$ of $M$,
suppose $M \subseteq N \subseteq M[G]$ and $N$ is a class of $M[G]$.
Then $N$ is a symmetric submodel of $M[G]$ if and only if
$N$ is of the form 
$\mathrm{HOD}(M(X))^{M[G]}$ for some
$X \in N$.
\end{theorem}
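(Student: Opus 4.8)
The statement to prove characterizes when a class $N$ with $M \subseteq N \subseteq M[G]$ is a symmetric submodel of $M[G]$: this happens precisely when $N = \mathrm{HOD}(M(X))^{M[G]}$ for some $X \in N$. The plan is to prove the two directions separately, using as the main engine Theorem B of Grigorieff (Theorem \ref{4.2} above), which says that for a ground $W$ of $M[G]$ with $W \subseteq L \subseteq M[G]$ and $L$ a $\ZF$-model, $M[G]$ is generic over $L$ if and only if $L = W(X)$ for some $X \in L$; combined with the known description of symmetric models as exactly the intermediate $\ZF$-models $N$ such that $M[G]$ is a generic extension of $N$ \emph{and} $N$ is generated over $M$ by a single set together with the ordinals (the ``$\HOD(M(X))$'' shape).

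\textbf{Direction ($\Leftarrow$).} First I would assume $N = \mathrm{HOD}(M(X))^{M[G]}$ for some $X \in N$. I would verify that $N$ is a transitive model of $\ZF$ containing all ordinals (this is the general fact about $\HOD$ relative to a model, quoted just before the theorem), that $M \subseteq N$ (since $M \subseteq M(X)$ and every element of $M$ is ordinal-definable from $M$), and that $M(X) \subseteq M[G]$, so $N \subseteq M[G]$. Then I would show $N$ is a symmetric submodel: by the quoted characterization of symmetric models, it suffices to see that $M[G]$ is a generic extension of $N$. For this I would argue that $M[G]$ is a generic extension of $M(X)$ — using Theorem \ref{4.2}, since $M$ is a ground of $M[G]$ and $M \subseteq M(X) \subseteq M[G]$, and $M(X)$ is literally of the form $M(X)$ — and then that $M[G]$ is a generic extension of $\HOD(M(X))^{M[G]}$, which is the standard ``$V$ is generic over $\HOD$ of a set-generic-related class'' phenomenon: one forces with $\col$ of a suitable rank-initial segment to make everything ordinal-definable, invoking Theorems \ref{universality} and \ref{2.4.2} to absorb the iteration into a single poset.

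\textbf{Direction ($\Rightarrow$).} Conversely, assume $N$ is a symmetric submodel of $M[G]$ with $M \subseteq N \subseteq M[G]$ and $N$ a class of $M[G]$. From the definition/standard theory of symmetric models I would extract the symmetry structure: there is a poset $\bbP \in M$, a group $\calG$ of automorphisms of $\bbP$, and a normal filter $\calF$ of subgroups, with $N$ the class of sets hereditarily definable from the symmetric names. The goal is to produce a single $X \in N$ with $N = \HOD(M(X))^{M[G]}$. The natural candidate for $X$ is an object coding the filter $\calF$ together with the relevant fragment of the generic — concretely, something like the set of ``symmetric pieces'' of $G$, or the induced generic for the symmetric forcing — chosen so that $N$ contains $X$ and $M(X) \subseteq N$. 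Then I would show $N \subseteq \HOD(M(X))^{M[G]}$: every element of $N$ has a symmetric name, and the interpretation of that name is definable in $M[G]$ from $X$, the name (coded in $M$, hence in $M(X)$), and ordinals. Conversely $\HOD(M(X))^{M[G]} \subseteq N$: everything ordinal-definable from $M(X) \subseteq N$ lies in $N$ because $N \prec M[G]$ in the relevant sense (being a symmetric submodel it is closed under definability from its own elements and ordinals, and $M(X) \subseteq N$). Taking the hereditary version on both sides gives equality.

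\textbf{Main obstacle.} The delicate step is the ($\Rightarrow$) direction: identifying the \emph{right} single set $X \in N$ that simultaneously (a) generates $M(X) \subseteq N$, (b) is itself in $N$ despite $N$ being choiceless, and (c) is rich enough that $\HOD(M(X))^{M[G]}$ recaptures all of $N$ — i.e. that every symmetric name's interpretation is $\mathrm{OD}$ from $M(X)$ in $M[G]$. This requires a careful bookkeeping of the symmetric-forcing data and an argument that the whole filter $\calF$ and the action of $\calG$ are coded into $M[G]$ in an ordinal-definable-from-$X$ way; the homogeneity facts (Theorems \ref{2.4.1}, \ref{2.4.2}) and the amalgamation of iterated forcing (Theorem \ref{universality}) are the tools I expect to need there. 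Since this is precisely Theorem C of Grigorieff \cite{G}, I would in fact cite it rather than reprove it in full, and the ``proof'' here is really an explanation of how it fits the present framework.
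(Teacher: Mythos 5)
The paper gives no proof of this statement: it is imported verbatim as Theorem C of Grigorieff \cite{G}, and your ultimate decision to cite it rather than reprove it is exactly what the paper does. Be aware, though, that your sketch would not stand on its own if the citation were removed --- the ($\Leftarrow$) direction appeals to ``the quoted characterization of symmetric models,'' which is essentially the equivalence being proved, and the construction of the single set $X$ in the ($\Rightarrow$) direction (the entire content of Grigorieff's argument) is left unspecified --- so the citation is doing all of the work, just as it does in the paper.
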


\begin{proposition}
Suppose $\AC$ if forceable.
Then there is a transitive model $M$ of $\ZFC$
and a generic extension $M[G]$ of $M$
such that $M$ is definable in $V$ with parameters from $M$
and $V$ is a symmetric submodel of $M[G]$.
\end{proposition}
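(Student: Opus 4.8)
The strategy is to produce the symmetric-model data directly and then invoke Theorem C of \cite{G}: I plan to find a weakly homogeneous generic extension $V[g]\models\ZFC$ of $V$, together with a transitive model $M\models\ZFC$ that is a ground of $V[g]$, lies inside $V$, and is definable in $V$ with parameters from $M$, such that $V=\mathrm{HOD}(M(X'))^{V[g]}$ for some $X'\in V$; then $V$ is a symmetric submodel of $M[G]:=V[g]$. The key point is that $M$ should \emph{not} be the model $W$ furnished by Corollary \ref{4.5}: the generic extension of $V$ of which $W$ is a ground need not be weakly homogeneous, so $V$ need not be a $\mathrm{HOD}$-class there. Instead one first homogenizes, and only then locates $M$ inside $V$.

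Concretely, by Corollary \ref{4.5} there are $W\models\ZFC$, definable in $V$ with parameters from $W$, and a set $X$ with $V=W(X)$. Put $Y=\mathrm{trcl}(X)$. As in the proof of Corollary \ref{4.5}, $\col(Y)$ forces $\AC$ over $V$, and $\col(Y)$ is weakly homogeneous. Fix a $(V,\col(Y))$-generic $g$. Then $V[g]\models\ZFC$, and $V$ is a (possibly trivial) ground of $V[g]$.

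The main step is to check $\mathrm{HOD}(V)^{V[g]}=V$; I expect this to be the part requiring the most care, though it is routine. The inclusion $V\subseteq\mathrm{HOD}(V)^{V[g]}$ is immediate. For the converse I would argue by $\in$-induction that every $z\in\mathrm{HOD}(V)^{V[g]}$ lies in $V$: the induction hypothesis gives $z\subseteq V$, and $z$ is definable in $V[g]$ as the unique set satisfying $\psi(v,a,\vec{\alpha})$ for some $a\in V$ and ordinals $\vec{\alpha}$. Picking a $\col(Y)$-name $\dot z$ and using the weak homogeneity of $\col(Y)$ (Theorem \ref{2.4.1}), the trivial condition already decides $\psi(\dot z,\check a,\vec{\alpha})$ and, for each $w\in V$, decides $\check w\in\dot z$; hence $z=\{\, w\in V : \Vdash_{\col(Y)}\check w\in\dot z \,\}$, which is a set in $V$ (its members are bounded by the rank of $\dot z$), so $z\in V$.

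It remains to pin down $M$. Since $V[g]\models\ZFC$ (hence $\AC$) and $V$ is a ground of $V[g]$, Corollary \ref{5.5.5} applied in $V[g]$ yields a ground $W'$ of $V[g]$ with $W'\models\ZFC$ and $V=W'(X')$ for some set $X'$; in particular $W'\subseteq V$ and $X'\in V$. As $V[g]$ is a model of $\ZFC$ it has proper class many LS cardinals, so by Lemma \ref{2.10} $W'$ satisfies the norm covering and approximation properties for $V[g]$ for some ordinal, and hence (since $V\subseteq V[g]$) also for $V$; by Corollary \ref{2.9}, $W'$ is definable in $V$ with parameters from $W'$. Now set $M=W'$ and $M[G]=V[g]$. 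Then $M\models\ZFC$ is definable in $V$ with parameters from $M$, $M[G]$ is a generic extension of $M$, $M\subseteq V\subseteq M[G]$ with $V$ a class of $M[G]$, and $\mathrm{HOD}(M(X'))^{M[G]}=\mathrm{HOD}(W'(X'))^{V[g]}=\mathrm{HOD}(V)^{V[g]}=V$ with $X'\in V$. By Theorem C of \cite{G}, $V$ is a symmetric submodel of $M[G]$, as required.
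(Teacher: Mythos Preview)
Your argument is correct and shares the paper's core template: find a $\ZFC$-model $M\subseteq V$ with $V=M(X)$, arrange that some weakly homogeneous generic extension of $V$ is simultaneously a generic extension of $M$, and compute $\mathrm{HOD}(M(X))=V$ there via weak homogeneity to invoke Theorem~C of \cite{G}. The difference is organizational. The paper fixes $M$ first, taking it directly from Corollary~\ref{4.5}(3), and then homogenizes the extension: by Theorems~\ref{universality} and~\ref{2.4.2} one enlarges to a $\col(V_\alpha)$-extension $V[H]$ which is simultaneously a $\col(M_\beta)$-extension $M[G]$, after which $\mathrm{HOD}(M(X))^{V[H]}\subseteq\mathrm{HOD}(M(X))^{V}=V$ is immediate. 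You instead homogenize first (forcing with $\col(Y)$) and only afterwards locate $M=W'$ via Corollary~\ref{5.5.5}; this costs you an extra definability check for $W'$ in $V$, which you carry out exactly as in the proof of Corollary~\ref{4.5}(1)$\Rightarrow$(3). Your worry that the original $W$ from Corollary~\ref{4.5} cannot serve as $M$ because the associated extension ``need not be weakly homogeneous'' is unfounded: the paper's use of Theorem~\ref{2.4.2} handles precisely this, so your detour through a second model $W'$ is unnecessary, though harmless. One small point: to invoke Corollary~\ref{2.9} you need proper class many LS cardinals in $V$ itself, not just in $V[g]$; this holds by Corollary~\ref{6.2} (or Corollary~\ref{abs of LS}), and you should say so explicitly.
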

\begin{proof}
By Corollary \ref{4.5},
there is a definable transitive model $M$ of $\ZFC$ such that
$V=M(X)$ for some $X \in V$, and $M$ is a ground of some generic extension $V[H]$ of $V$.
By appealing to Theorem \ref{universality},
we may assume that $H$ is $(V, \col(V_\alpha))$-generic for some $\alpha$,
and there is $\beta$ and
$(M, \col(M_\beta))$-generic $G$ 
with $V[H]=M[G]$.
Let us consider $\mathrm{HOD}(M(X))^{M[G]}$,
which is a symmetric submodel of $M[G]$.
Since $\col(V_\alpha)$ is weakly homogeneous, ordinal definable, and $M \subseteq V$,
we have $M(X) \subseteq \mathrm{HOD}(M(X))^{M[G]} 
= \mathrm{HOD}(M(X))^{V[H]} 
\subseteq \mathrm{HOD}(M(X))^{V} \subseteq  V$,
hence 
$M(X) =\mathrm{HOD}(M(X))^{M[G]} =  V$.
\qed\end{proof}

\begin{corollary}
The following are equivalent:
\begin{enumerate}
\item $\AC$ is forceable.
\item There is a transitive model $M$ of $\ZFC$ and
a generic extension $M[G]$ of $M$
such that $M$ is definable in $V$ with parameters from $M$, and $V$ is a symmetric submodel of $M[G]$.
\end{enumerate}
\end{corollary}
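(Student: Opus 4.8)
The implication $(1)\Rightarrow(2)$ is the Proposition above, so the plan is to establish $(2)\Rightarrow(1)$. Assume $V$ is a symmetric submodel of a generic extension $M[G]$ of a transitive model $M$ of $\ZFC$. First I would apply the characterization of symmetric submodels (Theorem C in \cite{G}) to fix $X\in V$ with $V=\HOD(M(X))^{M[G]}$; thus $V$ is precisely the class of sets whose transitive closure lies in $\mathrm{OD}(M(X))^{M[G]}$. The goal is then to show that $\AC$ is forced over $V$ by $\bbP=\col(\mathrm{trcl}(\{X\}))\in V$. As in the proof of $(2)\Rightarrow(1)$ of Corollary \ref{4.5}, every set of a generic extension $V[G']$ is a surjective image of a set of $\bbP$-names from $V$, so it is enough to see that every set of $V$ is well-orderable in $V[G']$; and since $\bbP$ makes $\mathrm{trcl}(\{X\})$, hence $\fs\mathrm{trcl}(\{X\})$, countable, this reduces to the one substantial point.

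\medskip
\noindent\textbf{Key claim.} For every $Y\in V$ there are an ordinal $\alpha$ and a surjection $g\colon\fs\mathrm{trcl}(\{X\})\times\alpha\to Y$ with $g\in V$. (Equivalently, $\mathrm{SVC}$ of Blass \cite{Blass} holds in $V$ with witness $\mathrm{trcl}(\{X\})$, which by his theorem already yields $(1)$; the $\col(\mathrm{trcl}(\{X\}))$-argument is just a concrete version.)

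\medskip
To prove the claim I would fix $Y\in V$ and work first inside $M[G]$, which satisfies $\ZFC$. From $\mathrm{trcl}(\{Y\})\subseteq\mathrm{OD}(M(X))^{M[G]}$, replacement in $M[G]$ produces ordinals $\eta,\delta,\zeta$ so that every $w\in\mathrm{trcl}(\{Y\})$ is the unique $u\in V_\delta^{M[G]}$ with $M[G]\models\varphi(u,\vec p,\vec\gamma)$ for some formula $\varphi$, some $\vec p\in V_\eta^{M(X)}$ and some $\vec\gamma<\zeta$. Collecting all such pairs $\langle(\varphi,\vec p,\vec\gamma),u\rangle$ into a set $h$, one has that $h$ is a set of $M[G]$, definable there from $V_\eta^{M(X)}$ and $\delta,\zeta$, and --- checking each element of $\mathrm{trcl}(\{h\})$ in turn --- that $\mathrm{trcl}(\{h\})\subseteq\mathrm{OD}(M(X))^{M[G]}$, so $h\in V$. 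By construction $h$ maps $\om\times\fs V_\eta^{M(X)}\times\fs\zeta$ onto a superset of $Y$. Next, using $M\models\ZFC$ I would pick a well-ordering $W\in M$ of a $V_{\eta'}^M$ with $V_\eta^{M(X)}\subseteq L(V_{\eta'}^M\cup\{X\})$; since every element of $L(V_{\eta'}^M\cup\{X\})$ is ordinal-definable there from finitely many elements of $V_{\eta'}^M\cup\mathrm{trcl}(\{X\})$, each element of $V_{\eta'}^M$ is coded by $W$ together with an ordinal, and $M(X)$ is a definable inner model of $M[G]$, another appeal to $V=\HOD(M(X))^{M[G]}$ gives a surjection $k\in V$ from $\fs\mathrm{trcl}(\{X\})\times\zeta'$ onto $V_\eta^{M(X)}$ for some ordinal $\zeta'$. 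Composing $h$ with $k$ and recoordinatizing finite sequences then yields the desired $g\in V$, and the collapsing argument above finishes $(1)$.

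\medskip
The hard part is exactly the two membership assertions ``$h\in V$'' and ``$k\in V$'': these surjections cannot be constructed \emph{within} $V$, because $V$ has no access to the satisfaction relation of the larger universe $M[G]$, so their membership in $V$ has to be read off the identity $V=\HOD(M(X))^{M[G]}$. This is also where the hypothesis $M\models\ZFC$ is used essentially --- it lets us trade parameters from $M$ for ordinals via the well-ordering $W$, confining the failure of choice to $\mathrm{trcl}(\{X\})$ --- and why the statement must fail when $M$ is merely a model of $\ZF$, as the choiceless examples of the previous section show.
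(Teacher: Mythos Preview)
Your proof is correct and takes essentially the same route as the paper: the paper's $(2)\Rightarrow(1)$ is the one-line citation ``follows from the result of Blass \cite{Blass}'' (symmetric submodels of $\ZFC$-generic extensions satisfy $\mathrm{SVC}$, hence $\AC$ is forceable), and your Key Claim is precisely $\mathrm{SVC}$ with witness $\mathrm{trcl}(\{X\})$, with the construction of $h$ and $k$ via the identity $V=\HOD(M(X))^{M[G]}$ being an explicit reconstruction of Blass's argument. Your version has the merit of making visible exactly where the hypothesis $M\models\ZFC$ enters (through the well-ordering $W$ of $V_{\eta'}^M$), which the bare citation hides, but there is no substantive difference in approach.
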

\begin{proof}
(1) $\Rightarrow$ (2) follows from the previous proposition,
and (2) $\Rightarrow$ (1) follows from the result of Blass \cite{Blass}.
\qed\end{proof}

\subsection*{Acknowledgements}
The author would like to thank Asaf Karagila for his many valuable comments.
The author also thank the referee who gives the author many corrections,
and Daisuke Ikegami who pointed out  the failure of $\mathrm{SVC}$ in Chang's model.
This research was supported by 
JSPS KAKENHI Grant Nos. 18K03403 and 18K03404.

%
%


\end{document}